\definecolor{darkblue}{rgb}{0.2,0.2,0.6}
\definecolor{superdarkblue}{rgb}{0.2,0.2,0.3}
\definecolor{citegreen}{rgb}{0.2,0.2,0.6}
\definecolor{green2}{rgb}{0.,0.6,0.2}
\DeclarePairedDelimiter{\norm}{\lVert}{\rVert}
\newcommand\V{\mathbb{V}}
\newcommand{\beq}{\begin{equation} \begin{split}}
\newcommand{\eeq}{\end{split} \end{equation}}
\newcommand\Omg{\Omega}
\def\section{\@startsection{section}{1}\z@{.9\linespacing\@plus\linespacing}%
	{.7\linespacing} {\fontsize{13}{14}\selectfont\bfseries\centering}}
\def\paragraph{\@startsection{paragraph}{4}%
	\z@{0.3em}{-.5em}%
	{$\bullet$ \ \normalfont\itshape}}
\renewcommand\and{\qquad\text{and}\qquad}
\newcommand\sm{\setminus}
\newcommand\dl{\delta}
\newcommand{\comm}[1]{}
\def\sfH{\mathsf{H}}\def\sfS{\mathsf{S}}
\def\sfS{\mathsf{S}}
\def\bm1{\mathbbm{1}}
\def\G{\Gamma}
\def\s{\sigma}
\def\p{\partial}
\def\sfW{\mathsf{W}}
\def\omg{\omega}
\newcommand{\spn}{\mathrm{span}\,}
\def\Re{{\rm Re}\,}
\def\arr{\rightarrow}
\def\tt{\theta}
\def\aa{\alpha}
\def\lm{\lambda}
\def\s{\sigma}
\def\ii{{\mathsf{i}}}
\def\p{\partial}
\def\sfH{\mathsf{H}}
\def\sfP{\mathsf{P}}
\def\dd{{\mathsf{d}}}
\def\uhr{\upharpoonright}
\def\omg{\omega}
\def\sfS{\mathsf{S}}
\def\sfW{\mathsf{W}}
\def\sfP{\mathsf{P}}
\def\sfU{\mathsf{U}}
\def\sfV{\mathsf{V}}
\newcommand*{\medoplus}{\mathbin{\scalebox{1.5}{\ensuremath{\oplus}}}}%
\newcounter{counter_a}
\newenvironment{myenum}{\begin{list}{{\rm(\roman{counter_a})}}%
		{\usecounter{counter_a}
			\setlength{\itemsep}{1.ex}\setlength{\topsep}{0.8ex}
			\setlength{\leftmargin}{5ex}\setlength{\labelwidth}{5ex}}}{\end{list}}
\newcommand{\eg}{{\it e.g.}\,}
\newcommand{\ie}{{\it i.e.}\,}
\newcommand{\cf}{{\it cf.}\,}
\numberwithin{figure}{section}
\numberwithin{equation}{section}
\theoremstyle{plain}
\newtheorem*{thm*}{Theorem}
\newtheorem{thm}{Theorem}[section]
\newtheorem{lem}[thm]{Lemma}
\newtheorem{prop}[thm]{Proposition}
\newtheorem{cor}[thm]{Corollary}
\newtheorem{dfn}[thm]{Definition}
\theoremstyle{remark}
\newtheorem{remark}[thm]{Remark}
\theoremstyle{plain}
\newcommand{\beu}{\begin{equation*}}
\newcommand{\eeu}{\end{equation*}}
\newcommand{\besu}{\begin{equation*}
	\begin{aligned}}
\newcommand{\eesu}{\end{aligned}
	\end{equation*}}
\newcommand{\bes}{\begin{equation}
	\begin{aligned}}
\newcommand{\ees}{\end{aligned}
	\end{equation}}
\newcommand\cD{\mathcal D}
\newcommand\cF{\mathcal F}
\newcommand\cG{\mathcal G}
\newcommand\cH{\mathcal H}
\newcommand\cK{\mathcal K}
\newcommand\cL{\mathcal L}
\newcommand\cM{\mathcal M}
\newcommand\cN{\mathcal N}
\newcommand\cP{\mathcal P}
\newcommand\cS{\mathcal S}
\newcommand\CC{\mathbb C}
\newcommand\ZZ{\mathbb Z}
\newcommand\ov{\overline}
\newcommand\wt{\widetilde}
\newcommand\wh{\widehat}
\newcommand\void[1]{}
\def\ov{\overline}
      \def\dC{{\mathbb C}}
   \def\dN{{\mathbb N}}   
      \def\dR{{\mathbb R}}
\def\dS{{\mathbb S}}      
\def\dV{{\mathbb V}}      
   \def\dZ{{\mathbb Z}}
\def\sfA{{\mathsf A}}   \def\sfB{{\mathsf B}}   
\def\sfD{{\mathsf D}}      
   \def\sfH{{\mathsf H}}   \def\sfI{{\mathsf I}}
   \def\sfK{{\mathsf K}}   
\def\sfP{{\mathsf P}}   \def\sfQ{{\mathsf Q}}   
\def\sfS{{\mathsf S}}   \def\sfT{{\mathsf T}}   \def\sfU{{\mathsf U}}
\def\sfV{{\mathsf V}}   \def\sfW{{\mathsf W}}
      \def\cC{{\mathcal C}}
\def\cD{{\mathcal D}}   \def\cE{{\mathcal E}}   \def\cF{{\mathcal F}}
\def\cG{{\mathcal G}}   \def\cH{{\mathcal H}}   \def\cI{{\mathcal I}}
   \def\cK{{\mathcal K}}   \def\cL{{\mathcal L}}
\def\cM{{\mathcal M}}   \def\cN{{\mathcal N}}   
\def\cP{{\mathcal P}}      
\def\cS{{\mathcal S}}   \def\cT{{\mathcal T}}   \def\cU{{\mathcal U}}
\def\cV{{\mathcal V}}      
   \def\cZ{{\mathcal Z}}
\newcommand{\dom}{\mathrm{dom}\,}
\title[Self-adjointness for the MIT bag model on a cone]{Self-adjointness for the MIT bag model on an unbounded cone}
\author[B. Cassano]{Biagio Cassano}
\address{Dipartimento di Matematica e Fisica \\ Universit\`a degli Studi
  della Campania  ``Luigi Vanvitelli", Viale Lincoln 5, 81100 Caserta, Italy \\
  E-mail: {biagio.cassano@unicampania.it}}
\author[V. Lotoreichik]{Vladimir Lotoreichik}
\address{
Department of Theoretical Physics\\
Nuclear Physics Institute, Czech Academy of Sciences, 
25068 \v{R}e\v{z}, Czech Republic\\
E-mail: {lotoreichik@ujf.cas.cz }
}
\keywords{Dirac operator, MIT bag model, unbounded circular cone, self-adjointness, Hardy inequality, orthogonal decomposition}
\subjclass[2020]{47F05, 35P15, 81Q10}
\begin{document}
\begin{abstract}
	We consider the massless Dirac operator with the MIT bag boundary conditions on an unbounded three-dimensional circular cone. For convex cones, we prove that this operator is self-adjoint defined on four-component $H^1$--functions satisfying the MIT bag boundary conditions. The proof of this result
	relies on separation of variables
	and spectral estimates for one-dimensional fiber Dirac-type operators. Furthermore, we provide a numerical evidence for the self-adjointness on the same domain also for non-convex cones. Moreover, we prove a Hardy-type inequality for such a Dirac operator on convex cones, which, in particular, yields stability of self-adjointness under perturbations by a class of unbounded potentials. Further extensions of our results to Dirac operators with quantum dot boundary conditions are also discussed.
\end{abstract}

\maketitle
\section{Introduction}
In the present paper, we consider the three-dimensional massless
Dirac operator $\sfD_\omg$ on an unbounded Euclidean circular cone of half-aperture $\omg\in(0,\pi)$
\begin{equation}\label{eq:cone}
	\cC_\omg := 
	\left\{(x_1,x_2,x_3)\in\dR^3\colon x_3> \cot\omg\,(x_1^2+x_2^2)^{\frac12}
	\right\}
\end{equation}
with the MIT bag boundary conditions. These boundary conditions are
related to the celebrated MIT bag model~\cite{C75, CJJT74, CJJTW74,
  DJJK75, J75}, which describes the confinement of quarks in hadrons. The cone $\cC_\omg$ on which the Dirac operator is defined can be viewed 
as a Lipschitz hypograph in the sense of~\cite[Eq. (3.26)]{McL} and clearly $\cC_\omg$ is non-smooth due to the conical point at the origin $\mathbf{0}\in\dR^3$.
The operator domain of $\sfD_\omg$ 
consists of functions in the space $C^\infty_0(\ov{\cC_\omg}\sm\mathbf{0};\dC^4) :=\{u|_{\cC_\omg}\colon u\in C^\infty_0(\dR^3\setminus\mathbf{0};\dC^4)\} $
satisfying the MIT bag boundary conditions on the boundary of $\cC_\omg$. This densely defined operator can be shown to be symmetric. The natural question that arises whether this operator is essentially self-adjoint
or whether it has non-zero deficiency indices.   

In the existing literature self-adjointness of the
three-dimensional Dirac operators
with the MIT bag boundary
conditions was first shown for $C^2$-smooth domains with
compact boundaries~\cite{OV18, BHM20}; see also \cite{ALR17, ALR20}
for related approaches. 
It is possible to regard Dirac operators with MIT bag boundary
conditions in the framework of 
Dirac operators with Lorentz-scalar $\delta$-shell interactions; see \eg~\cite{AMV14, AMV15, AMV16, BEHL16, BHSS21, 
HOP18, OV18, CLMT21, B21b, R2022}, the review papers~\cite{BEHL19, OP21}, and the references 
therein. In particular, in~\cite{B21b}
self-adjoint\-ness for interactions with non-compact supports less regular than
$C^2$-smooth is shown, but the regularity assumptions there still
exclude the conical surface. 
On the other hand, in~\cite{BHSS21} self-adjointness for interactions supported on boundaries of general bounded Lipschitz domains that can have conical points is proved: a difference with the approach of~\cite{BHSS21} is that they define the Dirac operator with the operator domain contained in the Sobolev space $H^{1/2}$ whereas in our construction we 
verify (essential) self-adjointness for a more regular
operator domain, contained in the Sobolev space $H^1$. We remark that the unboundedness of the conical domain is not a real issue: away from the vertex singularity the boundary of the domain is smooth, so it can be handled as in \cite{B21b, R2020}.
Considering a more regular operator domain can in many cases lead to non-zero deficiency indices and, in fact, this happens
in a related setting of two-dimensional polygons analysed in~\cite{LO18,PV19}.

We prove that the  Dirac operator $\sfD_\omg$ is unitarily equivalent to an infinite orthogonal sum of Dirac operators on the half-line with off-diagonal Coulomb-type potentials, whose coefficients
are given by the eigenvalues of an effective essentially self-adjoint spin-orbit operator $\sfK_\omg$ on the spherical cap $\cC_\omg\cap\dS^2$, where $\dS^2\subset\dR^3$ is the two-dimensional unit sphere centred at the origin. We
characterise the eigenvalues of $\sfK_\omg$
in terms of solutions of transcendental equations and  show that for
convex cones $\omega < \pi/2$ all the eigenvalues
of $\sfK_\omg$ escape the critical interval $[-\frac12,\frac12]$.
This construction implies that $\sfD_\omg$ is
essentially self-adjoint in this case. Moreover, we show that for $\omg < \pi/2$ the domain of the self-adjoint closure $\ov{\sfD_\omg}$ of $\sfD_\omg$
	consists of four-component $H^1$-functions satisfying the MIT bag boundary conditions in the sense of traces.
We complement our
analysis by numerical results describing the eigenvalue distribution
 of $\sfK_\omg$ for $\pi/2 < \omega < \pi$, supporting the claim that
 $\sfD_\omg$ is essentially self-adjoint for any $\omega \in (0,\pi)$: we remark that
 the numerical approach is only used to find the roots of explicit transcendental equations associated to the problem.
Finally, we study for $\omg < \pi/2$ the stability of
the self-adjointness $\ov{\sfD_\omg}$ under perturbations
by Hermitian matrix-valued potentials that can have Coulombic singularity at the origin. 
 
Our analysis is inspired by the closely related 
considerations for the Laplace operator on planar domains with corners and three-dimensional domains with conical points
and for the Dirac operator on planar domains with corners. Let us briefly review the existing literature on the subject.  The two-dimensional Laplace operator 
with Dirichlet boundary conditions on a curvilinear polygon $\cP\subset\dR^2$ was first considered in~\cite{BS62} and then further analysed in~\cite[Chap. 4]{G85}, \cite[Chap. 2]{G92}, \cite{D88} and more recently in \cite{P13}. 
In particular, it is proved that the deficiency indices of such a Laplace operator with the domain $H^2(\cP) \cap H^1_0(\cP)$
are $(n,n)$, where $n$ is the number of non-convex corners of $\cP$; \ie corners with the opening angle larger than $\pi$.
A similar effect occurs for the Robin Laplacian, where the coefficient in the boundary conditions having a specific singularity at a boundary point
creates deficiency indices~\cite{ES88, MR09, NP18}.	
The three-dimensional Laplace operator with Dirichlet boundary conditions on a bounded domain $\Omega\subset\dR^3$ with a conical point is analysed first in~\cite{HS67,K67}, see also \cite[\S 8.2.2]{G85} and \cite{BDY99,D88}.
The appearance of deficiency indices here is more subtle and we discuss it in detail in Remark~\ref{rem:Laplace_cone}. It should be emphasized that for the Dirichlet Laplacian
on a domain that has a conical point and which is smooth outside it, the transition between self-adjointness and existence of non-trivial deficiency indices indeed occurs for some critical opening angle of the cone. In this perspective the phenomenon for the Dirac operator is substantially different.
As for Dirac operators, analysis of deficiency indices for boundaries with singular points has only been carried out
in two dimensions~\cite{FL21, LO18, PV19, CL20} so far. The deficiency indices on a curvilinear polygon here are characterised in the same way as for the Dirichlet Laplacian~\cite{LO18, PV19}. 
For the discontinuous infinite mass boundary conditions on a sector the deficiency indices can be unequal \cite{CL20}
while for Lorentz-scalar $\delta$-shell interactions supported on star-graphs~\cite{FL21} various scenarios are possible and, in particular, the deficiency indices depend on the strength of the interaction.

The structure of the paper is as follows. In Section~\ref{sec:main_res} we rigorously define the Dirac operator on an unbounded cone with MIT bag boundary conditions and formulate all the main results of the paper. Section~\ref{sec:pre} contains preliminary material used throughout the paper. In this section we recall the concept of boundary triples, analyse self-adjointness of a model Dirac operator on the half-line, and find the form of the spin-orbit differential expression in the spherical coordinates. Further, in Section~\ref{sec:Dirac_model} we study the spectrum of a model Dirac-type operator on the interval $(0,\omg)$. Relying on this spectral analysis we decompose the operator $\sfD_\omg$ in Section~\ref{sec:decomp} in spherical coordinates and using this decomposition we prove all the main results of the paper. Finally, the paper is complemented by
Appendix~\ref{app:pderiv} with the derivation of expressions for partial derivatives in spherical coordinates, Appendix~\ref{app:Dirac} on Dirac systems, and Appendix~\ref{app:plots} with numerical results for non-convex cones. 

\section{Setting and the main results}
\label{sec:main_res}
In this section, we will first rigorously define the operator $\sfD_\omg$ clarifying the MIT bag boundary conditions. 
Then we will formulate all the main results of the present paper.
\subsection{Definition of the operator}
\label{ssec:def}
The problem is better described in  spherical coordinates  $(r,\tt,\phi) \in \dR_+\times[0,\pi] \times [0,2\pi)$ on $\dR^3$, which are connected with the Cartesian coordinates $(x_1,x_2,x_3)$ by the well-known formulae
\begin{equation}\label{eq:x123}
\begin{cases}
x_1  = r\cos\phi\sin\tt,\\
x_2  = r\sin\phi\sin\tt,\\
x_3  = r\cos\tt.
\end{cases}
\end{equation}
In this convention the north pole of the unit sphere $\dS^2$ has coordinates 
	$r =1, \tt = 0$, and $\phi = 0$.
The cone with the half-aperture $\omg  \in (0,\pi)$ defined in~\eqref{eq:cone} can be alternatively characterised in the spherical coordinates as
\begin{equation}\label{eq:cone_spherical}
	\cC_\omg = \big\{(r\sin\tt\cos\phi,r\sin\tt\sin\phi,r\cos\tt)\colon r\in\dR_+, \tt \in [0,\omg), \phi\in[0,2\pi)\big\}.
\end{equation}
The unit normal vector to the boundary $\p\cC_\omg$ of the cone
$\cC_\omg$ pointing outwards is explicitly given as
\[
	\nu_\omg(r,\phi) = (\cos\omg\cos\phi,\cos\omg\sin\phi,-\sin\omg),\qquad r\in\dR_+, \phi\in[0,2\pi).
\]

After having described the geometric setting, we can
proceed to the definition of the Dirac operator on $\cC_\omg$. To this aim we recall the expressions for the $\dC^{2\times2}$ Pauli matrices
\[
	\s_1 := \begin{pmatrix}0 &  1 \\ 1& 0\end{pmatrix},\qquad
	\s_2 := \begin{pmatrix}0 &  -\ii \\ \ii& 0\end{pmatrix},\qquad \s_3 := \begin{pmatrix}1 & 0 \\ 0& -1\end{pmatrix},
\]
and for the $\dC^{4\times4}$ Dirac matrices $\aa_j$, $j=1,2,3$, and $\beta$
\[
	\aa_j := \begin{pmatrix} 0 & \s_j\\ \s_j &0\end{pmatrix},\quad j=1,2,3,\qquad\quad
	\beta = \begin{pmatrix} I_2 & 0\\ 0 & -I_2\end{pmatrix},
\]
where $I_2$ is the identity matrix in $\dC^2$. The Dirac matrices are Hermitian and they satisfy the anti-commutation relations
\[
	\aa_k\aa_j+ \aa_j\aa_k = 2\dl_{kj}\qquad\text{and}\qquad \aa_k\beta+\beta\aa_k = 0,\quad k,j\in\{1,2,3\},
\]
where $\dl_{kj}$ is the Kronecker symbol.
We define the triple of the Dirac matrices $\aa := (\aa_1,\aa_2,\aa_3)$ and define $\aa\cdot b$ with $b\in\dC^3$ as
$\aa\cdot b = \aa_1b_1+\aa_2b_2+\aa_3 b_3$.
With all the above preparations, the Dirac differential expression in $\dR^3$ is given by 
\[
	\cD := -\ii\aa\cdot\nabla = -\ii(\aa_1\p_1+\aa_2\p_2 +\aa_3\p_3).
\]
We consider the following Dirac operator in the Hilbert space $(L^2(\cC_\omg;\dC^4),(\cdot,\cdot)_{\cC_\omg})$ (here and in the following all the inner products are linear in the first entry)
\begin{equation}\label{eq:Dirac_operator}
\begin{aligned}
	\sfD_\omg u& := \cD u,\\
	\dom\sfD_\omg& := 
	\big\{u\in C^\infty_0(\ov{\cC_\omg}\sm \mathbf{0};\dC^4)\colon u|_{\p\cC_\omg} + \ii\beta(\aa\cdot\nu_\omg) u|_{\p\cC_\omg} = 0\big\},
\end{aligned}
\end{equation}
where $C^\infty_0(\ov{\cC_\omg}\sm \mathbf{0};\dC^4) = \{u|_{\cC_\omg}\colon u\in C^\infty_0(\dR^3\setminus\mathbf{0};\dC^4)\}$ is the space of smooth four-component functions on $\ov{\cC_\omg}$, the support of which is a compact subset of the closure of the cone with the removed tip $\ov{\cC_\omg}\sm\mathbf{0}$.
The operator $\sfD_\omg$ is symmetric.
Indeed, integrating by parts we find for $u,v\in\dom\sfD_\omg$
\[
\begin{aligned}
	(-\ii\aa\cdot\nabla u,v)_{\cC_\omg} - ( u,-\ii\aa\cdot\nabla v)_{\cC_\omg}
	& \!=\! \int_{\p \cC_\omg} \langle-\ii(\aa\cdot\nu_\omg)u|_{\p\cC_\omg},  v|_{\p \cC_\omg}\rangle_{\dC^4}\dd\s\\
	& \!=\! \int_{\p\cC_\omg} \langle(\aa\cdot\nu_\omg)(\beta(\aa\cdot\nu_\omg))u|_{\p\cC_\omg},  \ii \beta (\aa\cdot\nu_\omg) v|_{\p\cC_\omg}\rangle_{\dC^4}\dd\s\\
	&\!=\!
	\int_{\p\cC_\omg} \langle\ii(\aa\cdot\nu_\omg)^2(\beta^2(\aa\cdot\nu_\omg))u|_{\p\cC_\omg},   v|_{\p\cC_\omg}\rangle_{\dC^4}\dd\s\\
	&\!=\!
	\int_{\p\cC_\omg} \langle\ii(\aa\cdot\nu_\omg)u|_{\p\cC_\omg},   v|_{\p\cC_\omg}\rangle_{\dC^4}\dd\s =0,
\end{aligned}
\]
where we used that $\beta^2 = (\aa\cdot\nu_\omg)^2 = 1$ in the penultimate step.

It can be shown by the same integration by parts that the extension
\begin{equation}\label{eq:extension}
\{u\in H^1(\cC_\omg;\dC^4)\colon u|_{\p\cC_\omg} + \ii\beta(\aa\cdot\nu_\omg)u|_{\p\cC_\omg} = 0\}\ni u\mapsto \cD u
\end{equation}
of the Dirac operator $\sfD_\omg$ is symmetric, where the boundary conditions are understood in the sense of traces in the Sobolev space $H^{1/2}(\p\cC_\omg;\dC^4)$.

The MIT bag boundary condition
$u|_{\p\cC_\omg} + \ii\beta(\aa\cdot\nu_\omg) u|_{\p\cC_\omg} = 0$ can be written in terms of components $u=(u_1,u_2,u_3,u_4)^\top$ of the vector-valued function $u$  
\[
	\begin{pmatrix}
	1 & 0 & -\ii\sin\omg & \ii\cos\omg e^{-\ii \phi}\\
	0 & 1 & \ii\cos\omg  e^{\ii\phi}  & \ii\sin\omg\\
 \ii\sin\omg & -\ii\cos\omg e^{-\ii \phi} &1 &0\\
 -\ii\cos\omg  e^{\ii\phi}  & -\ii\sin\omg &0 &1
 \end{pmatrix}
 	\begin{pmatrix}
 u_1(r,\omg,\phi)\\
 u_2(r,\omg,\phi)\\
 u_3(r,\omg,\phi)\\
 u_4(r,\omg,\phi)
 \end{pmatrix} =0,
\]
where $r\in\dR_+$ and $\phi\in[0,2\pi)$.
This condition is equivalent to
\[
\begin{cases}
	u_1(r,\omg,\phi) - \ii\sin\omg u_3(r,\omg,\phi)+\ii\cos\omg e^{-\ii\phi} u_4(r,\omg,\phi) = 0,\\
	u_2(r,\omg,\phi)+\ii\cos\omg e^{\ii\phi} u_3(r,\omg,\phi)+\ii\sin\omg u_4(r,\omg,\phi) = 0.
\end{cases}
\]

\subsection{Main results}

In order to formulate our first main result
we need some preparation. We introduce for $\omg\in(0,\pi)\sm\{\frac{\pi}{2}\}$ the following matrix differential expressions on
the interval $\cI_\omg := (0,\omg)$ 
\[
	\tau_k := \begin{pmatrix}
	k+1 & -\frac{\dd}{\dd\tt}-\left(k+\frac12\right)\cot\tt\\
	\frac{\dd}{\dd\tt}-\left(k+\frac12\right)\cot\tt &-k\end{pmatrix},\qquad \cT_k := \begin{pmatrix}
	\tau_k &0\\
	0 &\tau_k\end{pmatrix},\qquad k\in\dZ,
\]
and 
associate to them the differential operators $\sfT_k$, $k\in\dZ$, in the Hilbert space $L^2(\cI_\omg;\dC^4)$
defined by
\begin{equation}\label{eq:Tkintro}
\begin{aligned}
	\sfT_k\psi &:=\cT_k\psi,\\
	\dom\sfT_k &:= \bigg\{\psi\in{\rm AC}_{\rm loc}(\cI_\omg;\dC^4)\colon \psi,\cT_k\psi \in L^2(\cI_\omg;\dC^4),
	 \left(\begin{smallmatrix}
	\psi_1(\omg)\\ \psi_2(\omg)\end{smallmatrix}\right) = \sfA_\omg \left(\begin{smallmatrix}
	\psi_3(\omg)\\ \psi_4(\omg)\end{smallmatrix}\right)\bigg\},
\end{aligned}
\end{equation}
where the matrix $\sfA_\omg$ is given by
\[
	\sfA_\omg := 
	\begin{pmatrix} 
	\ii\sin\omg &-\ii\cos\omg\\
	-\ii\cos\omg
	&-\ii\sin\omg\end{pmatrix}.
\]
We check that the operators $\sfT_k$, $k\in\dZ$, are self-adjoint, that their spectra are discrete, simple and symmetric about the origin and that zero is not an eigenvalue of any of these operators. Moreover, it turns out that $\s(\sfT_k) = \cZ_k\cup(-\cZ_k)$, where
the discrete subsets $\cZ_k$, $k\in\dZ$, of the real axis are defined by
\begin{equation}\label{eq:Zk0}
\begin{aligned}
	\cZ_{k} &:= \big\{\lm\in\dR\colon (\lm+k+1)\sfP_\lm^{-k-1}(\cos\omg) = \sfP_{\lm-1}^{-k}(\cos\omg)\big\}, & k&\in\dN_0,\\
	\cZ_k &:=\big\{\lm\in\dR\colon (\lm+k)\sfP_{\lm-1}^{k}(\cos\omg) = -\sfP_{\lm}^{k+1}(\cos\omg)\big\}, & -k&\in\dN,
\end{aligned}
\end{equation}	
where $\sfP_\nu^\mu(x)$, $x\in(-1,1)$, is the Ferrers function of the first kind~\cite[\S 5.15]{O97} and \cite[\S 8.7 and 8.8]{GR07}. Here, we drop the dependence on $\omg$ in the notation
$\sfT_k$ and $\cZ_k$ for the sake of brevity.

For $\lm\in\dR$, we introduce the auxiliary one-dimensional Dirac operator in the Hilbert space $L^2(\dR_+;\dC^2)$ by
\begin{equation}\label{eq:fibre}
		\mathbf{d}_\lm\psi := 
		\begin{pmatrix} 0 & -\frac{\dd}{\dd x} - \frac{\lm}{x}\\
		\frac{\dd}{\dd x} - \frac{\lm}{x} & 0\end{pmatrix}
		\begin{pmatrix}\psi_1\\ \psi_2\end{pmatrix},\qquad \dom\mathbf{d}_\lm := C^\infty_0(\dR_+;\dC^2).
\end{equation}
In the first main result we decompose $\sfD_\omg$ into an orthogonal sum of the fiber operators.
\begin{thm}\label{thm:main1}
  Let $\omg\in(0,\pi)\sm\{\frac{\pi}{2}\}$ and let the Dirac operator $\sfD_\omg$ be as in~\eqref{eq:Dirac_operator}.
  Let the operators $\sfT_k, k\in\dZ$, be as in~\eqref{eq:Tkintro}, the discrete sets $\cZ_k$ be as in~\eqref{eq:Zk0} and let the fiber operators $\mathbf{d}_\lm$ be as in~\eqref{eq:fibre}. Then the following unitary equivalences hold
  \begin{equation}\label{eq:Dirac_ortho}
    \sfD_\omg\simeq
    	\bigoplus_{k\in\dZ}
    \bigoplus_{\lm\in\s(\sfT_k)\cap(0,\infty)} \mathbf{d}_\lm\simeq \bigoplus_{k\in\dZ}\bigoplus_{\lm\in\cZ_k} \mathbf{d}_\lm.
  \end{equation}
\end{thm}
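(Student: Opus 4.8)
The strategy is the standard separation of variables for the Dirac operator, adapted to the conical geometry. First I would pass to spherical coordinates $(r,\tt,\phi)$ and use the unitary map $L^2(\cC_\omg;\dC^4)\to L^2(\dR_+,r^2\dd r)\otimes L^2(\cC_\omg\cap\dS^2;\dC^4)$, followed by the further unitary rescaling $u\mapsto r\,u$ that turns $L^2(\dR_+,r^2\dd r)$ into $L^2(\dR_+,\dd r)$. In these coordinates the Euclidean Dirac expression $\cD=-\ii\aa\cdot\nabla$ takes the well-known form
\[
\cD = -\ii(\aa\cdot\hat r)\Bigl(\p_r + \tfrac1r - \tfrac1r\sfK\Bigr),
\]
where $\aa\cdot\hat r$ is the radial Dirac matrix (a unitary, self-inverse, Hermitian multiplication operator on the sphere) and $\sfK$ is the spin-orbit (``Dirac angular momentum'') operator on $\dS^2$; the cone constraint $\tt<\omg$ together with the MIT bag boundary condition on $\p\cC_\omg = \{\tt=\omg\}$ restricts $\sfK$ to a self-adjoint operator $\sfK_\omg$ on the spherical cap $\cC_\omg\cap\dS^2$ with MIT-type boundary conditions at $\tt=\omg$. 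I would then invoke the earlier-announced fact that $\sfK_\omg$ is essentially self-adjoint with purely discrete spectrum; writing out the eigenvalue problem for $\sfK_\omg$ in terms of the $\phi$-Fourier modes $e^{\ii k\phi}$, $k\in\dZ$, reduces it to an ODE in $\tt$ whose solutions regular at $\tt=0$ are Ferrers functions $\sfP_\nu^\mu(\cos\tt)$, and imposing the MIT bag condition at $\tt=\omg$ yields exactly the transcendental equations defining $\cZ_k$ in~\eqref{eq:Zk0}. Thus the eigenvalues of $\sfK_\omg$ are $\bigcup_{k\in\dZ}\cZ_k$, and the anticommutation $\{\aa\cdot\hat r,\sfK\}$ relation (more precisely, $(\aa\cdot\hat r)$ intertwines $\sfK$ and $-\sfK$ or shifts it suitably, depending on convention) pairs eigenspaces so that on each two-dimensional joint eigenspace the radial part acts as a $2\times2$ first-order system.

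Next I would carry out the reduction on a fixed eigenspace. For each $\lm\in\cZ_k$, pick an orthonormal basis of the corresponding eigenspace of $\sfK_\omg$ adapted to the $\pm1$-eigenspaces of $\aa\cdot\hat r$; expanding a general $u\in\dom\sfD_\omg$ in this basis, the action of $\cD$ decouples into blocks, and on the block labelled by $\lm$ the operator becomes precisely $\mathbf d_\lm$ acting in $L^2(\dR_+;\dC^2)$, with $\dom\mathbf d_\lm = C^\infty_0(\dR_+;\dC^2)$ inherited from the fact that functions in $C^\infty_0(\ov{\cC_\omg}\sm\mathbf 0;\dC^4)$ have radial profiles compactly supported away from $r=0$ (and, after multiplication by $r$, still vanishing near $0$; the upper cutoff is automatic). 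The factor $-\ii(\aa\cdot\hat r)$ in front, together with the shift $\p_r+\frac1r$ being conjugated by the rescaling $u\mapsto ru$ to plain $\p_r$, produces the off-diagonal $\pm(\frac{\dd}{\dd x}\mp\frac{\lm}{x})$ structure in~\eqref{eq:fibre}. Summing over $k\in\dZ$ and $\lm\in\cZ_k$ gives the claimed unitary equivalence~\eqref{eq:Dirac_ortho}; one should also remark that $\cZ_k$ has no finite accumulation point, so the orthogonal sum is genuinely a countable direct sum indexed by a discrete set, and distinct $(k,\lm)$ with the same $\lm$ value simply contribute several copies of $\mathbf d_\lm$.

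The main obstacle, and the step deserving the most care, is the bookkeeping of the spin-orbit decomposition on the cap together with a rigorous matching of operator domains. Concretely: one must verify that the separation-of-variables map sends $\dom\sfD_\omg$ \emph{onto} (a dense subspace unitarily equivalent to) $\bigoplus_{k,\lm}C^\infty_0(\dR_+;\dC^2)$ — the inclusion in one direction is a computation, but surjectivity/density requires knowing that finite linear combinations of products (eigenfunction of $\sfK_\omg$) $\times$ (smooth compactly-supported radial profile) are dense in $\dom\sfD_\omg$ in the graph norm, which in turn uses essential self-adjointness of $\sfK_\omg$ on the cap and the fact that its eigenfunctions, being Ferrers functions, are smooth up to the boundary $\tt=\omg$ and satisfy the MIT condition there. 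A secondary subtlety is the correct handling of the singular point $\tt=0$ (the cone axis): the regular solution of the angular ODE must be selected so that the resulting $\dS^2$-eigenfunction is genuinely in $L^2$ of the cap and in the domain of $\sfK_\omg$, which is why only one of the two Ferrers functions $\sfP$ versus $\sfQ$ appears and why the index conventions in the two lines of~\eqref{eq:Zk0} differ between $k\ge0$ and $k<0$. Once these domain issues are settled, the algebraic identities (anticommutation of the Dirac matrices, the form of $\aa\cdot\hat r$, the intertwining with $\sfK$) are routine and the equivalence~\eqref{eq:Dirac_ortho} follows.
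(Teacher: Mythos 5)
Your plan follows essentially the same route as the paper's proof: spherical separation of variables with the radial rescaling, the spin-orbit operator $\sfK_\omg$ on the spherical cap with MIT-type conditions at $\tt=\omg$, Fourier modes in $\phi$ reducing to Ferrers-function ODEs whose boundary condition at $\tt=\omg$ produces the transcendental equations for $\cZ_k$, the intertwining of $\aa_r$ with the angular eigenspaces, and the reduction to the radial operators $\mathbf{d}_\lm$ on each two-dimensional fiber, including the domain bookkeeping you flag. The only slight imprecision is that the spectrum of $\sfK_\omg$ is $\bigcup_{k}(\cZ_k\cup(-\cZ_k))$ rather than $\bigcup_k\cZ_k$, but your pairing of the $\pm\lm$ eigenspaces via $\aa\cdot\hat r$ is exactly how the paper (Lemma~\ref{lem:spinors}) accounts for this, so the argument matches.
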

The proof of Theorem~\ref{thm:main1} relies on the representation of
$\sfD_\omg$ in spherical coordinates, in which the spin-orbit operator $\sfK_\omg$
acting in the Hilbert space $L^2([0,\omg)\times
[0,2\pi);\sin\tt\dd\tt\dd\phi)$ arises. The spin-orbit operator $\sfK_\omg$ is defined in~\eqref{eq:Komg} below. The operators $\sfT_k$, $k\in\dZ$,
appear naturally in the analysis
upon separation of variables for the spin-orbit operator $\sfK_\omg$.
We decompose the
Hilbert space $L^2(\cC_\omg;\dC^4)$ into an orthogonal sum of
subspaces, whose construction is based on the eigenfunctions of the aforementioned spin-orbit operator $\sfK_\omg$. This orthogonal decomposition of $L^2(\cC_\omg;\dC^4)$ enables to characterise the reducing subspaces for $\sfD_\omg$. It turns out that the compressions of $\sfD_\omg$ to these reducing subspaces are unitarily equivalent to the one-dimensional fiber operators in~\eqref{eq:fibre} with specific $\lm\in\dR$.  
\begin{remark}
	 The case $\omg = \frac{\pi}{2}$ does not fit to our general scheme of reduction and requires a separate consideration within our approach.
	 However, in this case the cone $\cC_{\pi/2}$ is the half-space and the theory is already established. The reader is referred to \cite{ALR20, B21a}, where it is shown that the Dirac operator on $\cC_{\pi/2}$ is self-adjoint defined on functions in the Sobolev space $H^1(\cC_\omg;\CC^4)$ satisfying the MIT bag boundary conditions.
	For this reason, we exclude the case $\omega = \frac{\pi}{2}$ from our results.
\end{remark}

Relying on the analysis of the fiber operators we get the following second main result.
\begin{thm}\label{thm:main2}
Let $\sfD_\omg$ be defined as 
in~\eqref{eq:Dirac_operator}. If $\omega \in (0,\pi/2)$ then $\sfD_\omg$ is essentially
self-adjoint and
\begin{equation}\label{eq:domain.closure}
  	\dom\overline{\sfD_\omg} := 
	\big\{u\in H^1(\cC_\omg ;\dC^4)\colon u|_{\p\cC_\omg} + \ii\beta(\aa\cdot\nu_\omg) u|_{\p\cC_\omg} = 0\big\},
\end{equation}
	where the trace $u|_{\p\cC_\omg}$ is well defined as a function in the Sobolev space $H^{1/2}(\p\cC_\omg;\dC^4)$.
\end{thm}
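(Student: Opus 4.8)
The plan is to derive Theorem~\ref{thm:main2} directly from the fiber decomposition in Theorem~\ref{thm:main1} together with standard facts about one-dimensional Dirac operators with Coulomb-type singularities. The key structural input is that
\[
	\overline{\sfD_\omg}\simeq \bigoplus_{k\in\dZ}\bigoplus_{\lm\in\cZ_k}\overline{\mathbf{d}_\lm},
\]
so essential self-adjointness of $\sfD_\omg$ is equivalent to essential self-adjointness of every fiber operator $\mathbf{d}_\lm$ with $\lm\in\cZ_k$, $k\in\dZ$. For the operator $\mathbf{d}_\lm$ in~\eqref{eq:fibre} it is classical (limit-point analysis at $0$ and at $\infty$) that $\mathbf{d}_\lm$ is essentially self-adjoint precisely when $|\lm|\ge \tfrac12$, and that for $|\lm|\ge\tfrac12$ the domain of its closure is contained in the maximal domain, with the first component of $\psi$ and $\psi_1/x,\psi_2/x$ all belonging to $L^2(\dR_+)$ — in particular $\overline{\mathbf{d}_\lm}$ is defined on $H^1$-type functions away from the origin with no extra boundary condition at $0$. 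Therefore the first step is to invoke the characterisation of the sets $\cZ_k$ established earlier: for $\omg\in(0,\pi/2)$ every $\lm\in\cZ_k$ satisfies $|\lm|\ge\tfrac12$, i.e.\ the eigenvalues of $\sfK_\omg$ all escape the critical interval $[-\tfrac12,\tfrac12]$. Combining this with the fiber decomposition immediately yields that $\sfD_\omg$ is essentially self-adjoint.

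The second, more delicate, half is the identification of $\dom\overline{\sfD_\omg}$ with the $H^1$-space in~\eqref{eq:domain.closure}. One inclusion is easy: the extension in~\eqref{eq:extension} is symmetric and extends $\sfD_\omg$, hence it is contained in $\overline{\sfD_\omg}$ (a symmetric extension of an essentially self-adjoint operator coincides with its closure on the part of the domain where it is defined), giving $H^1\cap(\text{MIT bag})\subset\dom\overline{\sfD_\omg}$. For the reverse inclusion one must show that any $u\in\dom\overline{\sfD_\omg}$ actually lies in $H^1(\cC_\omg;\dC^4)$. The strategy is to transport the regularity back through the unitary equivalence: write $u$ in the orthogonal decomposition adapted to the eigenfunctions of $\sfK_\omg$, so that $u$ corresponds to a sequence of fiber functions $\psi^{(j)}\in\dom\overline{\mathbf{d}_{\lm_j}}$. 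From the known structure of $\dom\overline{\mathbf{d}_\lm}$ one controls $\psi'^{(j)}$ and $\psi^{(j)}/x$ in $L^2(\dR_+)$ with constants depending on $\lm_j$, and one must show that summing over $j$ reproduces exactly the norm $\|\nabla u\|_{L^2(\cC_\omg)}^2 + \|u/|x|\|_{L^2(\cC_\omg)}^2$ up to the angular (spin-orbit) contribution $\|\sfK_\omg$-part$\|^2$. Here the point is that the full Dirac Laplacian satisfies $\cD^2 = -\Delta\otimes I_4$ and, in spherical coordinates, $-\Delta = -\partial_r^2 - \tfrac2r\partial_r + \tfrac{1}{r^2}(\text{angular part})$, with the angular part essentially $\sfK_\omg(\sfK_\omg+1)$ on each reducing subspace; so $\|\cD u\|^2 + \|u/|x|\|^2$ dominates $\|\nabla u\|^2$ after a Hardy-type control of the radial derivative, and the boundary terms vanish because of the MIT bag condition. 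This gives an a priori estimate $\|u\|_{H^1(\cC_\omg)}\lesssim \|u\|_{L^2} + \|\overline{\sfD_\omg} u\|_{L^2}$ on a core, which closes the argument; the trace $u|_{\p\cC_\omg}$ then makes sense in $H^{1/2}(\p\cC_\omg;\dC^4)$ by the usual trace theorem on the Lipschitz hypograph $\cC_\omg$, and the MIT bag condition passes to the limit by continuity of the trace.

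The main obstacle is precisely this $H^1$-identification near the conical tip: the fiber-wise information gives $L^2$-control of $\psi'$ and $\psi/x$ but one has to be careful that the angular regularity (the action of $\sfK_\omg$) combines with the radial regularity to produce genuine membership in $H^1(\cC_\omg)$ rather than merely in a weighted or anisotropic space, and that the constants in the Hardy-type bound are uniform in the fiber index $\lm_j$ (which is guaranteed since $|\lm_j|\ge\tfrac12$ uniformly, but the two-sided bound on $|\lm_j|$ near $\tfrac12$ must be handled with care, as the relevant Sobolev constant degenerates as $|\lm|\to\tfrac12$). A clean way to organise this is to prove the inequality $\|u\|_{H^1(\cC_\omg)}^2 \le C\big(\|\overline{\sfD_\omg}u\|_{L^2(\cC_\omg)}^2 + \|u\|_{L^2(\cC_\omg)}^2\big)$ for $u\in\dom\sfD_\omg$ first fiber-by-fiber and then take the closure, after which density of $\dom\sfD_\omg$ in $\dom\overline{\sfD_\omg}$ in the graph norm, together with completeness of $H^1(\cC_\omg;\dC^4)$, forces $\dom\overline{\sfD_\omg}\subset H^1(\cC_\omg;\dC^4)$, and combined with the easy inclusion establishes~\eqref{eq:domain.closure}.
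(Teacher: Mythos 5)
The first half of your argument (essential self-adjointness) is exactly the paper's route: the decomposition of Theorem~\ref{thm:main1}, the fact that $\cZ_k\cap[-\tfrac12,\tfrac12]=\emptyset$ for $\omg<\pi/2$, and essential self-adjointness of each fibre $\mathbf{d}_\lm$ for $|\lm|\ge\tfrac12$ (Proposition~\ref{prop:1D_Dirac}\,(ii)). Your ``easy inclusion'' for the domain is also correct: the $H^1$-MIT extension is symmetric, hence contained in $\ov{\sfD_\omg}$.

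The genuine gaps are in the reverse inclusion $\dom\ov{\sfD_\omg}\subset H^1(\cC_\omg;\dC^4)$, which you attempt via a global a priori estimate. First, your input on the fibres is too weak and partly wrong: it is not true that for every $|\lm|\ge\tfrac12$ the closure of $\mathbf{d}_\lm$ lives on functions with $\psi/x\in L^2(\dR_+)$ and no condition at $0$ --- at the endpoint $|\lm|=\tfrac12$ one only gets $H^{1/2}$-type control (cf. the proof of Proposition~\ref{prop:1D_Dirac}\,(i)), and $\dom\ov{\mathbf{d}_\lm}=H^1_0(\dR_+;\dC^2)$ is asserted in the paper only for $|\lm|>\tfrac12$. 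Moreover, your fibre-summation argument needs constants uniform in $\lm_j$; you flag the degeneration as $|\lm|\to\tfrac12$ but never resolve it, and ``the eigenvalues escape $[-\tfrac12,\tfrac12]$'' does not give uniformity. The missing ingredient is the quantitative bound $|\lm|\ge\frac{\pi}{4\omg}+\frac12>1$ of Proposition~\ref{prop:spectrum_outside2} (this is exactly where convexity is used), which you never invoke. Second, the claim that in the $\cD^2=-\Delta$ computation ``the boundary terms vanish because of the MIT bag condition'' is incorrect: integrating by parts twice leaves a curvature-weighted boundary integral on the lateral surface of the cone which does not vanish; the MIT bag condition only reduces it to a term proportional to $\int_{\p\cC_\omg}|u|^2$ weighted by the (nonzero) curvature, and it is its sign for the convex cone that would rescue the inequality --- a point your sketch neither states nor proves. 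If instead you run the estimate fibrewise, you need uniform-in-$(k,\lm)$ control of $\|\nabla_{\dS^2}\Psi_{k,\lm}^\pm\|_{L^2(\cM_\omg)}$, which is also not supplied. Note that the paper avoids all of this: it decomposes the symmetric $H^1$-MIT operator $\sfT$ into symmetric fibres $\mathbf{t}_{k,\lm}$, checks for each fixed $(k,\lm)$ that $\psi(r)r^{-1}\Psi_{k,\lm}^\pm\in H^1_{\rm sph}(\cC_\omg;\dC^4)$ whenever $\psi\in H^1_0(\dR_+)$ (only boundedness of $\nabla_{\dS^2}\Psi_{k,\lm}^\pm$ for that fixed pair is needed), and concludes $\mathbf{t}_{k,\lm}=\ov{\mathbf{d}_\lm}$ because a symmetric extension of a self-adjoint operator is trivial --- no uniform estimates at all. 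Your route could be repaired (curvature identity plus the bound of Proposition~\ref{prop:spectrum_outside2}), but as written these two steps are missing or wrong.
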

The proof of Theorem~\ref{thm:main2} is based on the decomposition in Theorem~\ref{thm:main1} and reduces to showing that the eigenvalues of
	the spin-orbit operator $\sfK_\omg$
escape the critical interval $[-\frac12,\frac12]$ or, equivalently, that $\s(\sfT_k)\cap[-\frac12,\frac12]=\emptyset$ for all $k\in\dZ$.

\begin{remark}\label{rmk:conjecture}
  We conjecture that Theorem \ref{thm:main2} is true for all the values $\omega \in (0,\pi)$ of the aperture angle. In Appendix \ref{app:plots} we provide numerical evidence to support our conjecture, 
 showing that for $\pi/2 < \omega < \pi$ all the eigenvalues
of the spin-orbit operator $\sfK_\omg$ escape the critical interval $[-\frac12,\frac12]$. This implies essential self-adjointness of $\sfD_\omg$ and the validity of \eqref{eq:domain.closure} for all $\omg\in(0,\pi)$.
\end{remark}
\begin{remark}
	The operator $\sfK_\omg$ can be viewed as a Dirac-type operator on the manifold $\dS^2\cap\cC_\omg$. Geometric spectral bounds in the spirit of~\cite{HMZ01} can also be useful to show that there are no eigenvalues of $\sfK_\omg$ in the interval $[-\frac12,\frac12]$. This approach is also suitable to deal with
	general non-circular cones, for which separation of variables can not be used any more. 
\end{remark}

\begin{remark}\label{rmk:mit_minus}
In the literature, sometimes also the following operator $\sfD_\omg^{-}$ is called Dirac operator with MIT bag boundary conditions:
\begin{equation}\label{eq:Dirac_operator_-}
\begin{aligned}
	\sfD_\omg^{-} u& := \cD u,\\
	\dom\sfD_\omg^{-}& := 
	\big\{u\in C^\infty_0(\ov{\cC_\omg}\sm \mathbf{0};\dC^4)\colon u|_{\p\cC_\omg} - \ii\beta(\aa\cdot\nu_\omg) u|_{\p\cC_\omg} = 0\big\},
\end{aligned}
\end{equation}
the difference between $\sfD_\omg$ and $\sfD_\omg^{-}$ standing in the minus sign in the boundary conditions of $\sfD_\omg^{-}$. 
The operator $\sfD_\omg^{-}$ is unitarily equivalent to $\sfD_\omg$ through the unitary and self-adjoint matrix
\[
\gamma_5 
= \begin{pmatrix}
0 & I_2 \\
I_2 & 0
\end{pmatrix},
\]
\ie~$\sfD_\omg^{-} = \gamma_5 \sfD_\omg \gamma_5$, see \cite[Proof of Prop.~3.4]{BHM20}. As a consequence, Theorem \ref{thm:main2} immediately implies that for $\omg \in (0, \pi/2)$ the Dirac operator $\sfD_\omg^{-}$ is essentially self-adjoint and 
\begin{equation*}
  	\dom\overline{\sfD_\omg^{-}} := 
	\big\{u\in H^1(\cC_\omg ;\dC^4)\colon u|_{\p\cC_\omg} - \ii\beta(\aa\cdot\nu_\omg) u|_{\p\cC_\omg} = 0\big\}.
\end{equation*}
\end{remark}
\begin{remark}\label{rem:Laplace_cone}
  It is interesting to compare Theorem~\ref{thm:main2} with related results for the Laplace operator with Dirichlet boundary condition.
	Let $\Omega\subset\dR^3$ be a bounded domain such that $\mathbf{0}\in\p\Omg$, that $\p\Omg\sm\{\mathbf{0}\}$ is $C^2$-smooth and that there exists a neighbourhood of $\cV$ of $\mathbf{0}$ such that in $\cV$ the domain $\Omega$ coincides with an unbounded circular cone $\cC_\omg$ with half-aperture $\omg\in(0,\pi)$.  We consider the densely defined symmetric Dirichlet Laplacian on $\Omg$
	\[
		\sfH^\Omg_{\rm D} u := -\Delta u,
		\qquad \dom\sfH^\Omg_{\rm D} := H^2(\Omg)\cap H^1_0(\Omg).
	\]
	Let $t_0$ be the zero of $\sfP_{\frac12}^0$ on the interval $(-1,1)$, and set $\omg_{\rm cr} := \arccos(t_0)\approx 0.726\pi$.
	The operator $\sfH_{\rm D}^\Omg$ is closed by \cite[Thm. 8.2.2.1]{G85} and \cite[Lem. 4.1]{HS67} for all $\omg\ne \omg_{\rm cr}$.
	Combining \cite[Thm. 8.2.2.6]{G85} and \cite[Sec. 4]{HS67} one gets that if the angle $\omg > \omg_{\rm cr}$ then the deficiency indices of $\sfH_{\rm D}^\Omg$ are $(1,1)$ while if $\omg < \omg_{\rm cr}$ the operator $\sfH_{\rm D}^\Omg$ is self-adjoint. 
	Similar results are expected to hold also for the Dirichlet Laplacian on $\cC_\omg$, but to the best of our knowledge they are not stated in the existing literature in terms of deficiency indices. In this perspective the result for the Dirac operator $\sfD_\omg$ is qualitatively different. The transition to non-trivial deficiency indices for large half-aperture does not happen for $\sfD_\omg$ as suggested by our numerical analysis of the problem.
\end{remark}
It is possible to consider more general boundary conditions for the Dirac operator. For $\theta \in [0,2\pi)$, the Dirac operator $\sfD_\omg^\theta$ with \emph{quantum dot} bondary conditions is defined as follows: 
\begin{equation}\label{eq:Dirac_operator_quantumdot}
\begin{aligned}
	\sfD_\omg^{\theta} u& := \cD u,\\
	\dom\sfD_\omg^{\theta}& := 
	\big\{u\in C^\infty_0(\ov{\cC_\omg}\sm \mathbf{0};\dC^4)\colon u|_{\p\cC_\omg} = [(\cos\theta) \ii(\aa\cdot\nu_\omg)\beta + (\sin\theta)\beta] u|_{\p\cC_\omg} 
\big\}.
\end{aligned}
\end{equation}
One sees that $\sfD_\omg^{0} = \sfD_\omg$ and $\sfD_\omg^{\pi} = \sfD_\omg^{-}$ are the Dirac operators with MIT bag boundary conditions defined in \eqref{eq:Dirac_operator} and \eqref{eq:Dirac_operator_-} respectively; the operators $\sfD_\omg^{\frac{\pi}2}$ and $\sfD_\omg^{\frac{3\pi}2}$ are called Dirac operators with \emph{zig-zag} boundary conditions. From Theorem \ref{thm:main2} it is immediate to give a result on essential self-adjointness and self-adjointness for Dirac operators  with quantum dot boundary conditions, except for the case of zig-zag boundary conditions.
\begin{prop}\label{prop:quantumdot}
Let $\theta \in [0,2\pi) \setminus \{\frac{\pi}2,\frac{3\pi}2\}$ and let $\sfD_\omg^\theta$ be defined as 
in~\eqref{eq:Dirac_operator_quantumdot}. If $\omega \in (0,\pi/2)$ then $\sfD_\omg^\theta$ is essentially
self-adjoint and
\begin{equation}\label{eq:domain.closure2}
  	\dom\overline{\sfD_\omg^\theta} := 
	\big\{u\in H^1(\cC_\omg ;\dC^4)\colon u|_{\p\cC_\omg} = [(\cos\theta) \ii(\aa\cdot\nu_\omg)\beta + (\sin\theta)\beta] u|_{\p\cC_\omg} \big\},
\end{equation}
	where the trace $u|_{\p\cC_\omg}$ is well defined as a function in the Sobolev space $H^{1/2}(\p\cC_\omg;\dC^4)$.
\end{prop}
\begin{remark}
The Dirac operators $\sfD_\omg^{\frac{\pi}2}$ and $\sfD_\omg^{\frac{3\pi}2}$ with zig-zag boundary conditions represent different phenomena and are not considered in this work. We refer to \cite{H21} and references therein. 
\end{remark}

We complete our manuscript with a result on stability of essential self-adjointness and self-adjointness under perturbations of $\sfD_\omg$ by regular potentials.
\begin{thm}\label{thm:perturbation}
Let $\omega \in (0,\pi/2)$ and $\V\colon\cC_\omg \to \dC^{4\times 4}$ such that $\V(x)$ is Hermitian for all $x \in \cC_\omg$ and 
\begin{equation}\label{eq:bnd_V}
\sup_{x \in \cC_\omg} |x| |\V(x)| = \nu.
\end{equation}
If $\nu \le \frac{\pi}{4\omg}$, 
then the Dirac operator $\sfD_\omg + \V$ is essentially self-adjoint, with 
\begin{equation*}
\begin{split}
&( \sfD_\omg+ \V ) u = (\cD + \V) u, \\
& \dom ( \sfD_\omg + \V ) = \dom ( \sfD_\omg) = 
	\big\{u\in C^\infty_0(\ov{\cC_\omg}\sm \mathbf{0};\dC^4)\colon u|_{\p\cC_\omg} + \ii\beta(\aa\cdot\nu_\omg) u|_{\p\cC_\omg} = 0\big\}.
\end{split}
\end{equation*}
Moreover, if
$\nu < \frac{\pi}{4\omg}$, then $\overline{\sfD_\omg + \V} = \overline{\sfD_\omg} + \V$ is self-adjoint, with 
\begin{equation*}
\begin{split}
&( \overline{\sfD_\omg+ \V} ) u
= (\cD + \V) u, \\
& \dom(\ov{\sfD_\omg + \V}) = \dom \ov{\sfD_\omg} 
=
	\big\{u\in H^1(\cC_\omg ;\dC^4)\colon u|_{\p\cC_\omg} + \ii\beta(\aa\cdot\nu_\omg) u|_{\p\cC_\omg} = 0\big\}.
\end{split}
\end{equation*}
\end{thm}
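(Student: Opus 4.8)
The plan is to transport the perturbation $\V$ through the unitary equivalence of Theorem~\ref{thm:main1} and to reduce everything to the one-dimensional fiber picture, exactly as in the proof of Theorem~\ref{thm:main2}, but now keeping track of the additional Coulomb-type perturbation. First I would observe that the bound~\eqref{eq:bnd_V} says $|\V(x)|\le \nu/|x| = \nu/r$ in spherical coordinates, so $\V$ is relatively form-bounded (indeed relatively bounded with relative bound $0$ in the sense of the Hardy inequality) with a singularity located only at the conical tip $\mathbf 0$. The key quantitative input is the Hardy-type inequality for $\sfD_\omg$ on convex cones established earlier in the paper, which yields
\[
\Big\| \tfrac{1}{|x|}\, u \Big\|_{\cC_\omg} \le \frac{4\omg}{\pi}\, \|\sfD_\omg u\|_{\cC_\omg},\qquad u\in\dom\sfD_\omg,
\]
the constant $\frac{4\omg}{\pi}$ being precisely the reciprocal of the ground-state gap $\frac{\pi}{4\omg}$ of the relevant fiber operator (this is where the convexity $\omg<\pi/2$ and the spectral analysis of $\sfK_\omg$ and of the $\mathbf d_\lm$ enter). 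Combining this with~\eqref{eq:bnd_V} gives, for all $u\in\dom\sfD_\omg$,
\[
\|\V u\|_{\cC_\omg} \le \nu\, \Big\| \tfrac{1}{|x|}\, u \Big\|_{\cC_\omg} \le \nu\, \frac{4\omg}{\pi}\, \|\sfD_\omg u\|_{\cC_\omg}.
\]

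With this estimate in hand the two assertions follow from abstract perturbation theory. For the first part, if $\nu\le\frac{\pi}{4\omg}$ then the relative bound above is $\le 1$, so by the Wüst theorem (perturbation of an essentially self-adjoint operator by a symmetric operator with relative bound $\le 1$, see Kato or Reed--Simon) the operator $\sfD_\omg+\V$ is essentially self-adjoint on $\dom\sfD_\omg$; here I use that $\V(x)$ is Hermitian pointwise so that $\V$ is a symmetric operator and $\sfD_\omg$ is essentially self-adjoint by Theorem~\ref{thm:main2}. For the second part, if $\nu<\frac{\pi}{4\omg}$ the relative bound is $<1$, and the Kato--Rellich theorem applies directly: $\ov{\sfD_\omg}+\V$ is self-adjoint on $\dom\ov{\sfD_\omg}$, and $\V$ being $\ov{\sfD_\omg}$-bounded with relative bound $<1$ also gives $\dom(\ov{\sfD_\omg}+\V)=\dom\ov{\sfD_\omg}$, which by Theorem~\ref{thm:main2} is the stated $H^1$-space with the MIT bag trace condition. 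The identity $\ov{\sfD_\omg+\V}=\ov{\sfD_\omg}+\V$ then follows because $\V$ extends boundedly relative to the graph norm, so the closure of $\sfD_\omg+\V$ is computed on the closure of $\dom\sfD_\omg$ in that norm, which is $\dom\ov{\sfD_\omg}$.

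The main obstacle — and the only genuinely non-routine step — is securing the sharp Hardy inequality with the constant $\frac{4\omg}{\pi}$, i.e. identifying $\frac{\pi}{4\omg}$ as the exact threshold. This rests on the fiber decomposition $\sfD_\omg\simeq\bigoplus_{k}\bigoplus_{\lm\in\cZ_k}\mathbf d_\lm$ from Theorem~\ref{thm:main1} together with a lower bound $|\lm|\ge\frac{\pi}{4\omg}$ for every $\lm$ occurring in the decomposition (equivalently, that all eigenvalues of $\sfK_\omg$ lie outside $(-\frac{\pi}{4\omg},\frac{\pi}{4\omg})$, a sharpening of the statement that they escape $[-\tfrac12,\tfrac12]$), and on the elementary one-dimensional Hardy/Hardy--Dirac inequality $\big\|\tfrac1x\psi\big\|_{L^2(\dR_+;\dC^2)}\le\tfrac{1}{|\lm|}\|\mathbf d_\lm\psi\|_{L^2(\dR_+;\dC^2)}$ valid for each fiber with $|\lm|\ge\tfrac12$ (or whatever the precise fiber bound is), since the off-diagonal Coulomb operator $\mathbf d_\lm$ controls the weight $\tfrac\lm x$. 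Granting the sharp Hardy inequality, which I assume is proved in the part of the paper preceding this statement, the perturbation argument above is completely standard and short.
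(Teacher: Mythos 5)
Your proposal is correct and follows essentially the same route as the paper: the bound $\sup_x|x||\V(x)|=\nu$ is combined with the Hardy inequality of Proposition~\ref{prop:estimate.norm.resolvent} to get $\|\V u\|\le \nu\frac{4\omg}{\pi}\|\sfD_\omg u\|$ on $\dom\sfD_\omg$, after which the W\"ust theorem handles $\nu\le\frac{\pi}{4\omg}$ and the Kato--Rellich theorem handles $\nu<\frac{\pi}{4\omg}$, exactly as in the paper. The only cosmetic discrepancy is your parenthetical guess at the one-dimensional fiber bound (the correct constant involves $|\lm|-\tfrac12$ rather than $|\lm|$), but since you invoke the already established Hardy inequality rather than re-deriving it, this does not affect the argument.
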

Theorem~\ref{thm:perturbation} is worth to compare with the perturbation results for the Dirac operator in $\dR^3$. By~\cite[Thm. V 5.10]{Kato} the operator
\[
	H^1(\dR^3;\dC^4)\ni u\mapsto -\ii(\aa\cdot\nabla)u+ \dV u
\]
is self-adjoint in $L^2(\dR^3;\dC^4)$ if the Hermitian matrix-valued function $\dV\colon\dR^3\arr\dC^{4\times4}$ satisfies the condition $\sup_{x\in\dR^3}|x||\dV(x)| <\frac12$. For the Coulomb potential $\dV_{\rm C}(x) = \frac{\gamma}{|x|}I_4$ finer results are available, and, in particular, by~\cite[Thm. 2.1]{GR73} the above Dirac operator with $\dV = \dV_{\rm C}$ is self-adjoint provided that $|\gamma| <\frac{\sqrt{3}}{2}$. More detailed discussion and further references can be found in~\cite{CP18}. According to Theorem~\ref{thm:perturbation}, self-adjointness of the Dirac operator on convex cones with MIT bag boundary conditions is stable under adding
general matrix-valued potentials with stronger singularities at the origin, than in the case of the full space. In particular, we can add Coloumb potentials
(having singularity at the tip of the cone)
with arbitrary large coefficient $\gamma$ provided that the opening angle of the cone is sufficiently small.

Theorem \ref{thm:perturbation} is a consequence of the Hardy inequality in the following proposition, that we state because it has an independent interest.
Thanks to it, the proof of Theorem~\ref{thm:perturbation} descends immediately
from the Kato-Rellich theorem and the W\"{u}st theorem.
\begin{prop}\label{prop:estimate.norm.resolvent}
Let $\sfD_\omg$ be defined as 
in~\eqref{eq:Dirac_operator}. If $\omega \in (0,\pi/2)$ then
\begin{equation}\label{eq:estimate.norm.resolvent}
\int_{\cC_\omg}|(\sfD_\omg u)(x)|^2\dd x \ge \left(\frac{\pi}{4\omg}\right)^2\int_{\cC_\omg}\frac{|u(x)|^2}{|x|^2}\dd x,\qquad \text{for all}\,u\in\dom\sfD_\omg.
\end{equation}
\end{prop}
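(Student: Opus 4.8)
The plan is to deduce \eqref{eq:estimate.norm.resolvent} from the orthogonal decomposition of Theorem~\ref{thm:main1}, reducing it to a one-dimensional Hardy inequality for each fibre operator $\mathbf{d}_\lm$. Let $U$ be the unitary map realising the equivalence \eqref{eq:Dirac_ortho}. Since it is obtained by separation of variables in spherical coordinates, in which $|x| = r$ and the decomposition acts only on the angular variables, $U$ simultaneously intertwines the multiplication operator $u\mapsto|x|^{-1}u$ on $L^2(\cC_\omg;\dC^4)$ with the orthogonal sum of the multiplications $\psi\mapsto x^{-1}\psi$ on the fibres $L^2(\dR_+;\dC^2)$. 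Hence, for $u\in\dom\sfD_\omg$ — writing $Uu = (\psi_{k,\lm})_{k\in\dZ,\,\lm\in\cZ_k}$, whose components are smooth and, as $\operatorname{supp}u$ is a compact subset of $\ov{\cC_\omg}\sm\mathbf{0}$, compactly supported in $(0,\infty)$, i.e. $\psi_{k,\lm}\in C^\infty_0(\dR_+;\dC^2) = \dom\mathbf{d}_\lm$ — both sides of \eqref{eq:estimate.norm.resolvent} split into the convergent series $\sum_{k\in\dZ}\sum_{\lm\in\cZ_k}\|\mathbf{d}_\lm\psi_{k,\lm}\|^2$ and $\sum_{k\in\dZ}\sum_{\lm\in\cZ_k}\|x^{-1}\psi_{k,\lm}\|^2$, the norms being those of $L^2(\dR_+;\dC^2)$. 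It therefore suffices to prove, for every $\lm\in\bigcup_{k\in\dZ}\cZ_k$, the fibre inequality
\[ \|\mathbf{d}_\lm\psi\|^2 \ge \Big(\tfrac{\pi}{4\omg}\Big)^2\,\|x^{-1}\psi\|^2 \qquad \text{for all } \psi\in C^\infty_0(\dR_+;\dC^2), \]
where $\|\cdot\|$ is the norm of $L^2(\dR_+;\dC^2)$.

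For this fibre inequality, fix $\lm\in\dR$ and $\psi=(\psi_1,\psi_2)^\top\in C^\infty_0(\dR_+;\dC^2)$. By \eqref{eq:fibre} we have $\|\mathbf{d}_\lm\psi\|^2 = \|\psi_1'-\tfrac\lm x\psi_1\|^2 + \|\psi_2'+\tfrac\lm x\psi_2\|^2$, and since $\operatorname{supp}\psi$ is a compact subset of $(0,\infty)$, integration by parts (with no boundary contribution) gives $2\operatorname{Re}\int_0^\infty x^{-1}\overline{\psi_j}\,\psi_j'\,\dd x = \int_0^\infty x^{-2}|\psi_j|^2\,\dd x$, so that $\|\mathbf{d}_\lm\psi\|^2 = \|\psi_1'\|^2 + \|\psi_2'\|^2 + \lm(\lm-1)\|x^{-1}\psi_1\|^2 + \lm(\lm+1)\|x^{-1}\psi_2\|^2$. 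Applying the classical Hardy inequality $\int_0^\infty|f'|^2\,\dd x\ge\tfrac14\int_0^\infty x^{-2}|f|^2\,\dd x$ to $\psi_1$ and $\psi_2$ and using $\tfrac14+\lm(\lm\mp1)=(\lm\mp\tfrac12)^2$, one arrives at $\|\mathbf{d}_\lm\psi\|^2 \ge (\lm-\tfrac12)^2\|x^{-1}\psi_1\|^2 + (\lm+\tfrac12)^2\|x^{-1}\psi_2\|^2 \ge (|\lm|-\tfrac12)^2\,\|x^{-1}\psi\|^2$, the last step because $\min\{(\lm-\tfrac12)^2,(\lm+\tfrac12)^2\}=(|\lm|-\tfrac12)^2$ for every real $\lm$; this fibre constant is moreover optimal, being the least constant in the scalar Hardy inequalities for the two operators $-\tfrac{\dd^2}{\dd x^2}+\lm(\lm\mp1)\,x^{-2}$ that make up $\mathbf{d}_\lm^2$.

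It remains to verify that $(|\lm|-\tfrac12)^2\ge\big(\tfrac{\pi}{4\omg}\big)^2$, i.e. $|\lm|\ge\tfrac12+\tfrac{\pi}{4\omg}$, for every $\lm\in\bigcup_{k\in\dZ}\cZ_k$ whenever $\omg\in(0,\pi/2)$. This is a quantitative refinement — valid precisely in the convex range — of the qualitative fact, already exploited in Theorem~\ref{thm:main2}, that for convex cones the elements of the $\cZ_k$ escape the interval $[-\tfrac12,\tfrac12]$. I would establish it from the description of $\cZ_k\cup(-\cZ_k)$ as the spectrum of the one-dimensional Dirac-type operator on $(0,\omg)$ arising in the separation of variables for $\sfK_\omg$, by showing that the modulus of that spectrum is bounded below by $\tfrac12+\tfrac{\pi}{4\omg}$; concretely this amounts to locating the smallest roots of the transcendental equations \eqref{eq:Zk0} as functions of $\omg$, through monotonicity in $\lm$ together with standard estimates for the Ferrers functions $\sfP_\nu^\mu(\cos\omg)$. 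Granting this bound, the fibre inequality holds for every relevant $\lm$, and summing over $(k,\lm)$ yields \eqref{eq:estimate.norm.resolvent}. The first two steps are routine; the main obstacle — and the source both of the hypothesis $\omg<\pi/2$ and of the constant $\pi/(4\omg)$ — is this last, quantitative localisation of the parameters $\lm$ away from $\pm\tfrac12$.
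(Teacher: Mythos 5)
The parts of your argument that you actually carry out are correct and coincide with the paper's proof: the reduction via the decomposition of Theorem~\ref{thm:main1} to the fibres, and the one-dimensional computation giving $\|\mathbf{d}_\lm\psi\|^2\ge(|\lm|-\tfrac12)^2\|x^{-1}\psi\|^2$ by integration by parts plus the classical Hardy inequality, are exactly the paper's steps (its estimate \eqref{eq:hardy.each.component} is your fibre inequality). The genuine gap is the final step, which you yourself identify as the decisive one: the claim that $|\lm|\ge\tfrac12+\tfrac{\pi}{4\omg}$ for every $\lm\in\cZ_k$, $k\in\dZ$, whenever $\omg\in(0,\pi/2)$. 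You do not prove it; you only sketch that it ``amounts to locating the smallest roots of the transcendental equations \eqref{eq:Zk0}'' via monotonicity in $\lm$ and estimates for the Ferrers functions. That sketch is not a proof, and it is far from routine: the equations involve $\sfP_\lm^{-k-1}(\cos\omg)$ with both degree and order varying, no uniform-in-$k$ localisation of their roots by standard asymptotics is available off the shelf, and, since the threshold $\tfrac12+\tfrac{\pi}{4\omg}$ is not sharp (see Remark~\ref{rmk:sharpness_bound}), there is no structural reason why a direct root analysis of \eqref{eq:Zk0} should produce exactly this bound rather than some other quantity that would be much harder to extract.

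The paper obtains precisely this spectral localisation by a different, purely variational route, and this is where both the hypothesis $\omg<\pi/2$ and the constant $\pi/(4\omg)$ actually enter: for the interval operators $\sfT_k$ on $(0,\omg)$, whose spectra are $\cZ_k\cup(-\cZ_k)$, one computes the quadratic form of $\sfT_k-\tfrac12$ on compactly supported functions, checks that the boundary term at $\tt=\omg$ vanishes because of the MIT-type boundary condition, applies the refined Hardy inequality of Lemma~\ref{lem:hardy.for.us} (with the weight $1/\sin^2\tt$ and the extra term $\pi^2/(16\omg^2)$, valid exactly for $\omg\le\pi/2$), and combines the resulting bound (Proposition~\ref{prop:spectrum.outside}) with the symmetry of the spectrum (Corollary~\ref{cor:model_Dirac}) to conclude $|\lm|\ge\tfrac12+\tfrac{\pi}{4\omg}$ (Proposition~\ref{prop:spectrum_outside2}). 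If you are permitted to quote Proposition~\ref{prop:spectrum_outside2}, your argument closes and is essentially the paper's proof; as written, with that input replaced by an unexecuted plan based on special-function estimates, the proof is incomplete at its crucial point.
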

\begin{remark}
The constant in the inequality \eqref{eq:estimate.norm.resolvent}   follows from the estimate \eqref{eq:not_sharp}. Numerical evidence in Appendix~\ref{app:plots} shows that this constant is not sharp. However,
it can also be seen from numerics that the difference between the sharp constant and $\left(\frac{\pi}{4\omg}\right)^2$ tends to zero as $\omg\arr\frac{\pi}{2}$.
\end{remark}

\section{Preliminaries}\label{sec:pre}
In this section we provide preliminary material that will be used in the proofs of our main results. In Subsection~\ref{ssec:bt} we recall the concept of boundary triples and provide some basic facts related to it. Further, in Subsection~\ref{ssec:1D_Dirac} we study
	self-adjointness of a class of one-dimensional Dirac operators. Finally, in Subsection~\ref{ssec:spin-orbit} we represent the spin-orbit operator on the spherical cap in the spherical coordinates. 

\subsection{Boundary triples for adjoints of symmetric operators}\label{ssec:bt}
In this subsection we recall the concept of ordinary boundary triples. This concept is introduced in \cite{Br76, Ko75}, further developed in \eg \cite{DM91, DM95} and presented in detail in the monographs~\cite{BHdS20, S12}.
Throughout this subsection $\sfA$ denotes a densely defined symmetric operator in a Hilbert space $(\cH,(\cdot,\cdot)_\cH)$.

First, we define the notions of deficiency subspaces and deficiency indices of a symmetric operator.
\begin{dfn}
	The deficiency subspaces of the symmetric operator $\sfA$ are defined as
	\[
		\cN_\ii(\sfA) := \ker(\sfA^* -\ii)\and
		\cN_{-\ii}(\sfA) := \ker(\sfA^* +\ii).
	\]
	The deficiency indices of $\sfA$ are given by $n_\pm(\sfA) := \dim\cN_{\pm \ii}(\sfA)$.
\end{dfn}
Next, we recall the definition of the concept of a boundary triple for the adjoint of a symmetric operator.
\begin{dfn}\label{dfn:boundary_triple}
	A boundary triple for $\sfA^*$ is a triple 
	$\{\cG,\G_0,\G_1\}$ of a Hilbert space $(\cG,(\cdot,\cdot)_\cG)$ and linear mappings
	$\G_j\colon\dom\sfA^*\arr\cG$, $j=0,1$ such that
	\begin{myenum}
		\item $(\sfA^*f,g)_\cH - (f,\sfA^*g)_\cH = (\G_1 f,\G_0g)_\cG - (\G_0f,\G_1g)_\cG$
		for all $f,g\in\dom\sfA^*$;
		\item the mapping $\dom\sfA^*\ni f\mapsto (\G_0f,\G_1f)\in \cG\times\cG$ is surjective.
	\end{myenum}
\end{dfn} 
In the next proposition we provide a necessary and sufficient condition for the boundary triple for the adjoint of a symmetric operator to exist.
\begin{prop}\cite[Prop. 14.5]{S12}
	There exists a boundary triple $\{\cG,\G_0,\G_1\}$ for $\sfA^*$ if and only if the symmetric operator has equal deficiency indices. We then have $n_+(\sfA) = n_-(\sfA) = \dim\cG$.
\end{prop}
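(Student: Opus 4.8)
The plan is to prove the equivalence in both directions, using the abstract theory of symmetric operators and their extensions, and then extract the dimension count.

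First I would establish the forward direction. Suppose a boundary triple $\{\cG,\G_0,\G_1\}$ for $\sfA^*$ exists. The key observation is the abstract Green identity in Definition~\ref{dfn:boundary_triple}(i): writing it with $f=g\in\dom\sfA^*$ gives
\[
	(\sfA^* f,f)_\cH - (f,\sfA^* f)_\cH = (\G_1 f,\G_0 f)_\cG - (\G_0 f,\G_1 f)_\cG,
\]
so that $2\ii\,\mathrm{Im}(\sfA^* f,f)_\cH = 2\ii\,\mathrm{Im}(\G_1 f,\G_0 f)_\cG$. The point is that $\ker\G_0\cap\ker\G_1 = \dom\sfA$, i.e. the restriction $\sfA_0 := \sfA^*\!\restriction(\ker\G_0\cap\ker\G_1)$ coincides with the original symmetric operator $\sfA$ and is symmetric. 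To produce a self-adjoint extension I would consider $\sfA_0 := \sfA^*\!\restriction\ker\G_0$: the Green identity shows this restriction is symmetric, and surjectivity of $(\G_0,\G_1)$ (Definition~\ref{dfn:boundary_triple}(ii)) forces it to be self-adjoint (any $f$ in its domain-adjoint must satisfy $(\G_1 f, \G_0 g)=0$ for all $g$, and surjectivity makes $\G_0 g$ range over all of $\cG$, giving $\G_1 f = 0$). Since $\sfA$ thus admits a self-adjoint extension, von Neumann's theory immediately yields $n_+(\sfA) = n_-(\sfA)$.

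Next I would prove the converse together with the dimension formula. Assume $n_+(\sfA) = n_-(\sfA) =: n$. Here I would invoke von Neumann's first formula for the adjoint,
\[
	\dom\sfA^* = \dom\ov{\sfA}\,\dot{+}\,\cN_{\ii}(\sfA)\,\dot{+}\,\cN_{-\ii}(\sfA),
\]
a direct (not orthogonal) sum, where $\cN_{\pm\ii}(\sfA)=\ker(\sfA^*\mp\ii)$ are the deficiency subspaces. Fixing a unitary bijection $V\colon\cN_\ii(\sfA)\to\cN_{-\ii}(\sfA)$ (which exists precisely because the two deficiency subspaces have equal dimension $n$), I would set $\cG := \cN_\ii(\sfA)$ and, for $f = f_0 + f_\ii + f_{-\ii}$ decomposed as above, define
\[
	\G_0 f := f_\ii + V^{-1} f_{-\ii}, \qquad \G_1 f := \ii\bigl(f_\ii - V^{-1} f_{-\ii}\bigr).
\]
I would then verify the abstract Green identity (i) by a direct computation: expanding $(\sfA^* f,g)_\cH - (f,\sfA^* g)_\cH$, the contribution from $\dom\ov{\sfA}$ vanishes by symmetry, and the cross terms involving $\cN_{\pm\ii}$ reproduce exactly the boundary form $(\G_1 f,\G_0 g)_\cG - (\G_0 f,\G_1 g)_\cG$, using $\sfA^* f_{\pm\ii} = \pm\ii f_{\pm\ii}$ and the unitarity of $V$. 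Surjectivity (ii) is clear, since $(f_\ii, V^{-1}f_{-\ii})$ can be prescribed independently and the map to $(\G_0 f, \G_1 f)$ is an invertible linear change of variables on $\cG\times\cG$. Finally, by construction $\dim\cG = \dim\cN_\ii(\sfA) = n_+(\sfA) = n_-(\sfA)$, which gives the asserted equality $n_+(\sfA) = n_-(\sfA) = \dim\cG$.

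The main obstacle is the careful verification that the explicitly defined maps $\G_0,\G_1$ satisfy the abstract Green identity; this requires tracking the non-orthogonality of the von Neumann decomposition and the correct placement of the factor $\ii$ and of $V$ so that the boundary form matches on the nose. A secondary subtlety is that $V$ must be genuinely unitary rather than merely bijective, so that the inner products $(f_\ii,g_\ii)_\cH$ and $(V^{-1}f_{-\ii}, V^{-1}g_{-\ii})_\cH = (f_{-\ii},g_{-\ii})_\cH$ combine correctly. Since the statement is cited verbatim from \cite[Prop.~14.5]{S12}, in the paper itself I would simply reference that source, but the argument above is the standard self-contained construction underlying it.
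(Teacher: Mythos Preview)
The paper does not prove this proposition at all; it is stated with a citation to \cite[Prop.~14.5]{S12} and used as a black box. Your sketch is the standard textbook argument underlying that reference, so in substance you are reconstructing Schm\"udgen's proof rather than deviating from the paper.

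One point to tighten: in the forward direction you establish $n_+(\sfA)=n_-(\sfA)$, and in the converse you build a specific boundary triple with $\dim\cG=n_\pm(\sfA)$, but the proposition asserts $\dim\cG=n_\pm(\sfA)$ for \emph{any} boundary triple, not just the one you construct. To close this you should argue, for instance, that the restriction $\G_0\colon\cN_\ii(\sfA)\to\cG$ is a linear bijection (injectivity from the Green identity with $f=g\in\cN_\ii$, surjectivity from condition~(ii) combined with the von Neumann decomposition). Also, the claim $\ker\G_0\cap\ker\G_1=\dom\sfA$ should read $\dom\ov{\sfA}$ unless $\sfA$ is already closed, and you reuse the symbol $\sfA_0$ for two different restrictions. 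These are cosmetic in a sketch but would need cleaning up in a full proof.
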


For $\sfA$ a densely defined symmetric operator in a Hilbert
space $\cH$, the knowledge of a boundary triple for the operator $\sfA^*$ allows to parametrize
its self-adjoint restrictions.  
Let $\Theta$ be a linear operator in  $\cG$.
We define the operator $\sfA_{\Theta} := \sfA^* \upharpoonright \dom
\sfA_{\Theta}$, where
\begin{equation*}
	\dom \sfA_{\Theta} 
	:= \{f \in \dom \sfA^* \colon
	\G_0 f \in \dom \Theta,  \G_1 f = \Theta \G_0 f
	\}.
\end{equation*}
\begin{prop}[{\cite[Prop. 14.7\,(v)]{S12}}]
	\label{prop:bt_sa}
	The operator $\sfA_{\Theta}$ is self-adjoint in $\cH$
	if and only if $\Theta$ is self-adjoint in $\cG$.
\end{prop}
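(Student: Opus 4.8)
The plan is to identify the adjoint of $\sfA_\Theta$ as the operator $\sfA_{\Theta^*}$ associated with the adjoint parameter, and then read off the equivalence from the identity $\sfA_\Theta^* = \sfA_{\Theta^*}$. First I would record the inclusions $\sfA \subset \sfA_\Theta \subset \sfA^*$: the operator $\sfA_\Theta$ is a restriction of $\sfA^*$ by construction, while it extends $\sfA$ because both boundary maps vanish on $\dom\sfA$ (as follows from Definition~\ref{dfn:boundary_triple}(i) together with the surjectivity in (ii)), so that $\dom\sfA \subset \ker\G_0\cap\ker\G_1 \subset \dom\sfA_\Theta$. Taking adjoints reverses these inclusions, giving $\overline{\sfA}\subset \sfA_\Theta^*\subset\sfA^*$. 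In particular $\sfA_\Theta^*$ acts as $\sfA^*$, so $\sfA_\Theta^* g = \sfA^* g$ whenever $g\in\dom\sfA_\Theta^*$, and the whole problem reduces to identifying $\dom\sfA_\Theta^*$ as a subset of $\dom\sfA^*$.

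The key step is this computation of the adjoint domain. Fix $g\in\dom\sfA^*$. By the previous paragraph, $g\in\dom\sfA_\Theta^*$ holds if and only if $(\sfA^* f,g)_\cH = (f,\sfA^* g)_\cH$ for all $f\in\dom\sfA_\Theta$. Applying the abstract Green identity of Definition~\ref{dfn:boundary_triple}(i) and the boundary condition $\G_1 f = \Theta\G_0 f$ satisfied on $\dom\sfA_\Theta$, this is equivalent to
\[
(\Theta\G_0 f,\G_0 g)_\cG - (\G_0 f,\G_1 g)_\cG = 0 \qquad \text{for all } f\in\dom\sfA_\Theta.
\]
Here I would invoke the surjectivity in Definition~\ref{dfn:boundary_triple}(ii): for every $\varphi\in\dom\Theta$ there is an $f\in\dom\sfA^*$ with $\G_0 f = \varphi$ and $\G_1 f = \Theta\varphi$, hence $f\in\dom\sfA_\Theta$, and conversely $\G_0 f$ exhausts $\dom\Theta$ as $f$ ranges over $\dom\sfA_\Theta$. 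The displayed condition therefore reads
\[
(\Theta\varphi,\G_0 g)_\cG = (\varphi,\G_1 g)_\cG \qquad \text{for all } \varphi\in\dom\Theta,
\]
which by the definition of the adjoint in $\cG$ means exactly that $\G_0 g\in\dom\Theta^*$ and $\Theta^*\G_0 g = \G_1 g$. Consequently
\[
\dom\sfA_\Theta^* = \{g\in\dom\sfA^* \colon \G_0 g\in\dom\Theta^*,\ \G_1 g = \Theta^*\G_0 g\} = \dom\sfA_{\Theta^*},
\]
so that $\sfA_\Theta^* = \sfA_{\Theta^*}$.

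It then remains to deduce the stated equivalence. If $\Theta$ is self-adjoint, then $\Theta^* = \Theta$ and the identity just proved gives $\sfA_\Theta^* = \sfA_\Theta$, so $\sfA_\Theta$ is self-adjoint. Conversely, if $\sfA_\Theta$ is self-adjoint, then $\sfA_{\Theta^*} = \sfA_\Theta^* = \sfA_\Theta$, and I would close the argument with the injectivity of the correspondence $\Theta\mapsto\sfA_\Theta$: if $\sfA_{\Theta_1} = \sfA_{\Theta_2}$, then, using surjectivity as above, any $\varphi\in\dom\Theta_1$ lifts to some $f\in\dom\sfA_{\Theta_1} = \dom\sfA_{\Theta_2}$ with $\G_0 f = \varphi$, which forces $\varphi\in\dom\Theta_2$ and $\Theta_2\varphi = \G_1 f = \Theta_1\varphi$; by symmetry $\Theta_1 = \Theta_2$. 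Taking $\Theta_1 = \Theta$ and $\Theta_2 = \Theta^*$ yields $\Theta = \Theta^*$, that is, $\Theta$ is self-adjoint.

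The main obstacle is the middle paragraph: correctly using surjectivity to pass from the quantifier ``for all $f\in\dom\sfA_\Theta$'' to ``for all $\varphi\in\dom\Theta$'', which is precisely what makes the resulting relation coincide with the defining relation of $\Theta^*$ rather than of some strictly larger or smaller operator. Once this is in place, the inclusion bookkeeping and the injectivity of the parametrization are routine.
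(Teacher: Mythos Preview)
The paper does not prove this proposition at all; it is quoted verbatim from \cite[Prop.~14.7\,(v)]{S12} and used as a black box. Your argument is the standard textbook proof (essentially the one in Schm\"udgen's book): compute $\sfA_\Theta^* = \sfA_{\Theta^*}$ via the abstract Green identity and the surjectivity of $(\G_0,\G_1)$, then read off the equivalence. The steps are correct, including the point you flag as the main obstacle---the passage from ``for all $f\in\dom\sfA_\Theta$'' to ``for all $\varphi\in\dom\Theta$'' is handled properly by lifting each $\varphi\in\dom\Theta$ to an $f$ with $(\G_0 f,\G_1 f)=(\varphi,\Theta\varphi)$.

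One minor caveat: when you write ``which by the definition of the adjoint in $\cG$ means exactly that $\G_0 g\in\dom\Theta^*$ and $\Theta^*\G_0 g=\G_1 g$'', you are tacitly assuming that $\Theta$ is densely defined so that $\Theta^*$ is a genuine operator. In the converse direction (assuming $\sfA_\Theta$ self-adjoint and deducing $\Theta=\Theta^*$) this is not given a priori; strictly speaking one should first observe that self-adjointness of $\sfA_\Theta$ forces $\Theta$ to be symmetric, hence closable and in particular densely defined (or else work with linear relations from the start, as the paper's Remark after the proposition alludes to). This is a routine patch and does not affect the substance of your argument.
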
	 
\begin{remark}
	One can not parametrize all self-adjoint extensions of $\sfA$ by self-adjoint linear operators $\Theta$ as described above. In order to cover all self-adjoint extensions self-adjoint linear relations in the boundary conditions should be considered; \cf~\cite[Thm. 14.10]{S12}. This most general construction is not necessary for our purposes. 
\end{remark}
\subsection{Deficiency indices and self-adjoint extensions for a class of 1-D Dirac operators}
\label{ssec:1D_Dirac}
In this subsection we consider a model one-dimensional Dirac operator on the interval $(0,b)$ with $b\in (0,\infty]$.
We find the deficiency indices and characterise the self-adjoint extensions. In the analysis we rely on the general approach to one-dimensional Dirac operators briefly outlined in Appendix~\ref{app:Dirac}.

Let the parameter $\aa \in\dR$ be fixed. We consider the following Dirac differential expression on the interval $\cI := (0,b)$ with $b\in (0,\infty]$
\begin{equation}\label{eq:tau_alpha}
	\tau_\aa f :=\begin{pmatrix} 0 & 1\\ -1 & 0\end{pmatrix}f' + \begin{pmatrix} 0 & \frac{\aa}{x}\\
	\frac{\aa}{x} & 0\end{pmatrix} f,
\end{equation}
where $f = (f_1,f_2)^\top\colon \cI\arr\dC^2$.
The differential expression $\tau_\aa$ is of the type~\eqref{eq:differential_expression} 
with the potential
\begin{equation}\label{eq:q_alpha}
	q(x) = \begin{pmatrix} 0 & \frac{\aa}{x}\\
	\frac{\aa}{x} & 0\end{pmatrix}.
\end{equation}
Next we classify the endpoints of the interval $\cI$ for $\tau_\aa$ in the sense of Definition~\ref{dfn:singular_regular}.
The differential expression $\tau_\aa$ is singular at $x=0$ for $\aa \ne 0$, because the matrix norm of the potential $q$ in~\eqref{eq:q_alpha} has a non-integrable singularity at the origin, while $\tau_0$ is regular at $x = 0$. The differential expression $\tau_\aa$ is singular at the right endpoint $x = b$ if $b  = \infty$ and regular at $x = b$ if $b < \infty$.

In view of~\eqref{eq:maximal}, the maximal operator associated with $\tau_\aa$ is given by
\begin{equation}\label{eq:maximal_alpha}
\begin{split}
	\sfT_\aa f & := \tau_\aa f,\\
	\dom\sfT_\aa & := 
	\left\{f\in L^2(\cI;\dC^2)\colon f\in {\rm AC}_{\rm loc}(\cI;\dC^2), \tau_\aa f\in L^2(\cI;\dC^2)\right\}.
\end{split}	
\end{equation}
For $z \in\dC\sm\dR$, we define the deficiency subspaces by
\begin{equation}\label{eq:def_subspaces}
	\cN_{\aa,z} := \ker(\sfT_\aa - z).
\end{equation}
According to~\eqref{eq:preminimal} the densely defined preminimal operator is given by
\begin{equation}\label{eq:preminimal_alpha}
	\sfT_{0,\aa}'' f := \tau_\aa f,\qquad
	\dom\sfT_{0,\aa}''  := C^\infty_0(\cI;\dC^2).
\end{equation}
By Proposition~\ref{prop:domT0} the closed minimal symmetric operator $\sfT_{0,\aa} := \ov{\sfT_{0,\aa}''}$ is characterised by
\begin{equation}\label{eq:minimal_alpha}
\begin{split}
	\sfT_{0,\aa} f & = \tau_\aa f,\\
	\dom\sfT_{0,\aa} & = 
	\{f\in\dom\sfT_\aa\colon [g,f]_0 = [g,f]_b = 0,\,\, \forall\, g\in\dom\sfT_\aa\}\\
	& =
	\{f\in\dom\sfT_\aa\colon [g,f]_0 = [g,f]_b = 0,\,\, \forall\, g\in\cN_{\aa,\ii}+ \cN_{\aa,-\ii}\},
\end{split}	
\end{equation}
where the boundary values of Lagrange brackets $[g,f]_0$ and $[g,f]_b$ are defined via~\eqref{eq:bracket} and Proposition~\ref{prop:Green}.
By Proposition~\ref{prop:T0adjoint} we have
\begin{equation}\label{eq:adjoints}
	(\sfT_{0,\aa}'')^* = (\sfT_{0,\aa})^* = \sfT_\aa.
\end{equation}
Further, we characterise the deficiency indices and the self-adjoint
extensions of $\sfT_{0,\aa}''$ under different assumptions on $b$ and
$\aa$. This analysis is reminiscent of the one
in~\cite[Lem. 2.5]{LO18} and~\cite[Thms. 1.1 and 1.2]{CP18}, \cite{CP19}. We provide it in full detail for convenience of the reader and also because we need to cover some additional aspects. 
We remark that item (iii) of the below proposition will not be used in the proofs of our main results and is provided only for completeness.
\begin{prop}\label{prop:1D_Dirac}
	Let the maximal operator $\sfT_\aa$, the preminimal operator $\sfT_{0,\aa}''$
	and the minimal operator $\sfT_{0,\aa}$ be as in	\eqref{eq:maximal_alpha},~\eqref{eq:preminimal_alpha} and~\eqref{eq:minimal_alpha}, respectively.
	\begin{myenum}
	\item For $|\aa| \ge \frac12$ and $b < \infty$ the preminimal operator has deficiency indices $(1,1)$ and the triple $\{\dC,\G_0,\G_1\}$ with the well-defined mappings
	\begin{equation}\label{eq:bt}
	\G_0,\G_1\colon\dom\sfT_\aa\arr\dC,\qquad \G_0f := f_1(b),\quad \G_1f := f_2(b)
	\end{equation}
	is a boundary triple for $\sfT_\aa = (\sfT_{0,\aa}'')^*$. Moreover, any self-adjoint extension of $\sfT_{0,\aa}''$ has purely discrete spectrum.
	\item	For $|\aa| \ge \frac12 $ and $b =\infty$ the preminimal operator $\sfT_{0,\aa}''$  is essentially self-adjoint. 
	If, moreover, $|\aa| > \frac12$, then 
	$\dom \sfT_{0,\aa} = H_0^1 (\mathbb{R}_+;\mathbb{C}^2)$.	
	\item	For $\aa \in (-\frac12,\frac12)$ and $b = \infty$ the preminimal operator $\sfT_{0,\aa}''$ has deficiency indices $(1,1)$ and the deficiency subspaces are
	given by
	\[
		\cN_{\aa,\pm \ii} = \ker(\sfT_\aa \mp \ii)  =
		{\rm span}\,\left\{
		f_\aa^\pm
		\right\},\qquad 
		f_\aa^\pm(x) := \begin{pmatrix}
		x^{1/2}K_{\frac12 -\aa}(x)\\
		\mp \ii x^{1/2}K_{\frac12+\aa}(x)
		\end{pmatrix},
	\]
	where $K_\nu$ stands for the modified Bessel function of the second kind and order $\nu$.
	All self-adjoint extensions of $\sfT_{0,\aa}''$ are
        characterised by
	\begin{equation}\label{eq:extensions}
	\sfT_{\gamma,\aa}f = \tau_\aa f,
	\qquad\dom\sfT_{\gamma,\aa} =
	\left\{f + c(f_\aa^+ + \gamma f_\aa^-)\colon f\in\dom\sfT_{0,\aa},c\in\dC\right\},
	\end{equation}
	for $\gamma\in\dC$ with $|\gamma| = 1$.
	\end{myenum} 
\end{prop}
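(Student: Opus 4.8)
The plan is to reduce Proposition~\ref{prop:1D_Dirac} to an explicit study of the solutions of $\tau_\aa f=\pm\ii f$ together with the general Weyl-type theory for one-dimensional Dirac operators recalled in Appendix~\ref{app:Dirac}; the analysis runs parallel to \cite[Lem.~2.5]{LO18} and \cite[Thms.~1.1--1.2]{CP18}. First I would decouple the $2\times2$ system $\tau_\aa f=\pm\ii f$: differentiating one equation and substituting the other turns each component into a modified Bessel equation of order $|\aa\mp\tfrac12|$, so a fundamental system consists of the exponentially decaying pair
\[
f_\aa^\pm(x)=\begin{pmatrix}x^{1/2}K_{\frac12-\aa}(x)\\ \mp\ii\,x^{1/2}K_{\frac12+\aa}(x)\end{pmatrix}
\qquad\text{and the exponentially growing pair}\qquad
g_\aa^\pm(x)=\begin{pmatrix}x^{1/2}I_{\aa-\frac12}(x)\\ \pm\ii\,x^{1/2}I_{\aa+\frac12}(x)\end{pmatrix},
\]
where that $f_\aa^\pm$ and $g_\aa^\pm$ solve the system is checked from the recurrences $K_\nu'\mp\tfrac\nu xK_\nu=-K_{\nu\pm1}$ and $I_\nu'\mp\tfrac\nu xI_\nu=I_{\nu\pm1}$. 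Using $K_\nu(x)\sim\mathrm{const}\cdot x^{-|\nu|}$ and $I_\nu(x)\sim\mathrm{const}\cdot x^{\nu}$ as $x\to0^+$ and the exponential behaviour as $x\to\infty$, I would then classify the endpoints: $x=\infty$ is always in the limit-point case (one solution decays, the other grows); $x=b<\infty$ is a regular endpoint; and near $x=0$, with $f_\aa^\pm\sim(x^{1/2-|\frac12-\aa|},x^{1/2-|\frac12+\aa|})^\top$ and $g_\aa^\pm\sim(x^\aa,x^{\aa+1})^\top$, a short integrability computation shows that both solutions are square-integrable near $0$ exactly when $|\aa|<\tfrac12$ and exactly one is when $|\aa|\ge\tfrac12$, so that $x=0$ is limit-circle for $|\aa|<\tfrac12$ and limit-point for $|\aa|\ge\tfrac12$ (with only routine modifications when $\aa\in\tfrac12+\dZ$, where some Bessel orders become integers and, for $\aa=\tfrac12$, a logarithm appears; note that it is the lower component $x^{-\aa}$ which fails to be $L^2$ already at $\aa=\tfrac12$).

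From the endpoint classification the deficiency indices follow via the standard count (a limit-point endpoint contributes $0$, a limit-circle or regular endpoint contributes $1$): for $b<\infty$ and $|\aa|\ge\tfrac12$ one gets $n_\pm(\sfT_{0,\aa})=0+1=1$, which is item~(i); for $b=\infty$ and $|\aa|\ge\tfrac12$ one gets $n_\pm=0+0=0$, i.e.\ essential self-adjointness, which is the first assertion of item~(ii); and for $b=\infty$, $|\aa|<\tfrac12$ one gets $n_\pm=1+0=1$, and the unique (up to scalars) square-integrable solution of $\tau_\aa f=\pm\ii f$ on $\dR_+$ is $f_\aa^\pm$, so $\cN_{\aa,\pm\ii}=\mathrm{span}\{f_\aa^\pm\}$ (and $f_\aa^-=\overline{f_\aa^+}$ because $\tau_\aa$ has real coefficients). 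The description \eqref{eq:extensions} of all self-adjoint extensions is then the von Neumann parametrization applied to the one-dimensional deficiency spaces — equivalently the boundary-triple description via Proposition~\ref{prop:bt_sa} — with $\gamma$ ranging over the unit circle.

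For the remaining part of item~(ii), the identification $\dom\sfT_{0,\aa}=H^1_0(\dR_+;\dC^2)$ for $|\aa|>\tfrac12$, the key is a single integration by parts, giving for every $f\in C^\infty_0(\dR_+;\dC^2)$ the identity
\[
\|\tau_\aa f\|^2=\|f'\|^2+\aa(\aa-1)\Bigl\|\tfrac{f_1}{x}\Bigr\|^2+\aa(\aa+1)\Bigl\|\tfrac{f_2}{x}\Bigr\|^2 .
\]
Since $\sfT_{0,\aa}''$ is essentially self-adjoint here, it suffices to show the graph norm of $\sfT_{0,\aa}''$ is equivalent to the $H^1$-norm on $C^\infty_0(\dR_+;\dC^2)$ and then pass to closures ($H^1_0$ being the $H^1$-closure of $C^\infty_0$). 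Controlling the possibly negative middle term by the Hardy inequality $\|g/x\|\le2\|g'\|$, the identity yields such an equivalence with positive constant $(2|\aa|-1)^2$, which degenerates exactly at $|\aa|=\tfrac12$ (hence the strict inequality in the hypothesis); by the unitary equivalence $\tau_{-\aa}=\bigl(\begin{smallmatrix}0&1\\ -1&0\end{smallmatrix}\bigr)\tau_\aa\bigl(\begin{smallmatrix}0&1\\ -1&0\end{smallmatrix}\bigr)^{-1}$ it is enough to treat $\aa>\tfrac12$. The same identity also yields the Hardy-type bound $\|f/x\|\lesssim\|\tau_\aa f\|$ that reappears in Proposition~\ref{prop:estimate.norm.resolvent}.

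Finally, item~(i): near the regular endpoint $b$ every $f\in\dom\sfT_\aa$ lies in $H^1((\varepsilon,b);\dC^2)$ (from $f'=-(\ii\s_2)^{-1}(\tau_\aa f-\tfrac\aa x\s_1 f)\in L^2(\varepsilon,b)$ for small $\varepsilon$), hence extends continuously to $x=b$, so $\G_0f:=f_1(b)$ and $\G_1f:=f_2(b)$ are well defined; Green's formula reads $(\sfT_\aa f,g)_\cI-(f,\sfT_\aa g)_\cI=[g,f]_b-[g,f]_0$, and since $x=0$ is limit-point the bracket $[g,f]_0$ vanishes for all $f,g\in\dom\sfT_\aa$, so only the regular-endpoint term survives, which with the normalisation of the Lagrange bracket from Appendix~\ref{app:Dirac} equals $\G_1f\,\overline{\G_0g}-\G_0f\,\overline{\G_1g}$ — axiom~(i) of Definition~\ref{dfn:boundary_triple}; axiom~(ii) holds because $f\mapsto(f_1(b),f_2(b))$ is onto $\dC^2$, as one sees by taking $f=\chi\cdot c$ with $c\in\dC^2$ and $\chi\in C^\infty$ equal to $1$ near $b$ and vanishing near $0$, which establishes \eqref{eq:bt}. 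For discreteness of the spectrum of a self-adjoint extension $A$ (it suffices to treat one, since any two such extensions differ by a finite-rank resolvent perturbation), I would observe that $\tau_\aa^2=\mathrm{diag}\bigl(-\tfrac{d^2}{dx^2}+\tfrac{\aa(\aa-1)}{x^2},\,-\tfrac{d^2}{dx^2}+\tfrac{\aa(\aa+1)}{x^2}\bigr)$ is a pair of Bessel-type operators on the bounded interval $(0,b)$ with inverse-square coefficients $\ge-\tfrac14$, whose self-adjoint realizations have purely discrete spectrum, so $A^2$ — hence $A$ — has empty essential spectrum; alternatively, the resolvent of $A$ is an integral operator whose kernel, built from the solutions above, is Hilbert--Schmidt thanks to their near-$0$ asymptotics. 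The step I expect to cost the most care is the endpoint analysis at $x=0$: one must keep track of \emph{both} components (the threshold $|\aa|=\tfrac12$ being dictated by the lower one), handle the exceptional parameters $\aa\in\tfrac12+\dZ$ separately, and keep the signs in the Bessel recurrences straight when matching the two components of $f_\aa^\pm$ and $g_\aa^\pm$; this same threshold phenomenon is what makes the constant $(2|\aa|-1)^2$ in item~(ii) degenerate precisely at $|\aa|=\tfrac12$.
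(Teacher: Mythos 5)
Your proposal is correct in substance and follows the same skeleton as the paper for the core of the statement: explicit solution of $\tau_\aa f=\pm\ii f$ in terms of $x^{1/2}K_{\frac12\mp\aa}$ and $x^{1/2}I$-type solutions, asymptotics at $0$ and $\infty$, the Weyl endpoint classification with the resulting deficiency-index count, the Green's-formula/limit-point argument for the boundary triple \eqref{eq:bt}, and von Neumann's parametrization for item (iii). Where you genuinely diverge is in the two ``extra'' assertions. For the purely discrete spectrum in item (i), the paper proves the stronger domain inclusion $\dom\sfT_\aa\subset H^1(\cI;\dC^2)$ for $|\aa|>\frac12$ and $\dom\sfT_\aa\subset H^{1/2}(\cI;\dC^2)$ for $|\aa|=\frac12$ (via the Hardy-type results of \cite{CP18} and a trace-theorem argument), and then uses compact Sobolev embeddings; you instead reduce to a single extension by a rank-one resolvent difference and argue through the square $\tau_\aa^2$ (a diagonal pair of Bessel operators with inverse-square coefficients $\ge-\frac14$ on a bounded interval), or alternatively through a Hilbert--Schmidt Green kernel. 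Both of your routes are viable and, pleasantly, uniform in $\aa$ (no separate treatment of $|\aa|=\frac12$), but they lean on standard facts about singular Sturm--Liouville/Bessel realizations that the paper's argument establishes by hand, and they do not yield the domain regularity that the paper gets as a by-product. For $\dom\sfT_{0,\aa}=H^1_0(\dR_+;\dC^2)$ when $|\aa|>\frac12$, the paper simply cites \cite[Lem.~A.1]{FL21}; your integration-by-parts identity $\|\tau_\aa f\|^2=\|f'\|^2+\aa(\aa-1)\|f_1/x\|^2+\aa(\aa+1)\|f_2/x\|^2$ plus the Hardy inequality is a correct, self-contained replacement (the lower constant is of course $\min\{1,(2|\aa|-1)^2\}$, degenerating exactly at $|\aa|=\frac12$, as you note), together with the conjugation $\tau_{-\aa}=J\tau_\aa J^{-1}$ to reduce to $\aa>\frac12$.

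Two small inaccuracies, neither fatal: the claimed near-origin asymptotics $g_\aa^\pm\sim(x^{\aa},x^{\aa+1})^\top$ fail when the order $\aa-\frac12$ is a negative integer (as it is for all the half-integer $\aa$ relevant to this paper, where $I_{-n}=I_n$), and for generic $\aa\le-\frac12$ neither of your two basis solutions is $L^2$ near $0$, so ``exactly one is square-integrable'' does not follow from that computation alone; what you actually need, and do have, is that the $K$-solution fails to be $L^2$ near $0$ for $|\aa|\ge\frac12$, which already forces the limit-point case there. Keep the signs and exceptional orders straight (the paper's choice $\nu=|\aa-\frac12|$ with the conventions $I_{-1}=I_1$, $K_{-\nu}=K_\nu$ avoids this), and state the degenerate lower bound as a minimum; with these touch-ups the argument is complete.
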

\begin{proof}
	It is clear from~\eqref{eq:maximal_alpha},~\eqref{eq:def_subspaces}, and~~\eqref{eq:adjoints} that the deficiency indices of $\sfT_{0,\aa}''$  are given by
	\[
		n_\pm(\sfT_{0,\aa}'') = \dim\cN_{\aa,\pm \ii}.
	\]
	In order to characterise $\cN_{\aa,\pm \ii}$ we need to find all square-integrable solutions of the differential equations $\tau_{\aa}f = \pm \ii f$.
	
	The differential equation
	$\tau_{\aa}f = \pm \ii f$ is equivalent to the system
	\begin{equation}\label{eq:f1f2}
		\begin{cases}
		-f_1'(x) + \frac{\aa}{x} f_1(x) = \pm\ii f_2(x), \\
		+f_2'(x) + \frac{\aa}{x} f_2(x) =  \pm\ii f_1(x).
		\end{cases}
	\end{equation}
	We express $f_2$ through $f_1$ from the first equation in~\eqref{eq:f1f2} and substitute it into the second equation 
	\begin{equation}\label{eq:f1f22}
		\begin{cases}
		f_2(x) = \pm\left(\ii f_1'(x) - \ii \frac{\aa}{x} f_1(x)\right),\\
		\ii f_1''(x) - \ii\left(\frac{\aa}{x}f_1(x)\right)' +\frac{\aa}{x}\left(\ii f_1'(x) - \ii\frac{\aa}{x}f_1(x)\right) -\ii f_1(x) =0.
		\end{cases}
	\end{equation}
	The second equation in the above system can be transformed into
	\[
	f_1''(x) + \left(\frac{\aa}{x^2}f_1(x)-\frac{\aa}{x}f_1'(x)\right) +\frac{\aa}{x}\left( f_1'(x) - \frac{\aa}{x}f_1(x)\right) - f_1(x) =0
	\]
	and then simplified as
	\begin{equation}\label{eq:f1}
		f_1''(x) - \frac{\aa(\aa-1)}{x^2}f_1(x) - f_1(x) =0.
	\end{equation}
	By making the substitution
	$f_1(x) = g(x)x^{1/2}$	
	in~\eqref{eq:f1} we find that $g$ satisfies
	the ordinary differential equation
	\begin{equation}\label{eq:ODE_g}
		x^2g''(x) + xg'(x) - \left(\left(\aa-\frac12\right)^2 + x^2\right)g(x)  =0.
	\end{equation}
	According to~\cite[\S 9.6]{AS64} there
	are exactly
	two linearly independent solutions of the above equation given by the modified Bessel functions $K_\nu(x)$ and $I_\nu(x)$ of the order 
	\[
		\nu = \left|\aa-\frac12\right|,
	\]
	and any solution of~\eqref{eq:ODE_g} is a linear combination of them.
	Recall also that the modified Bessel functions are smooth on $\dR_+$.
	
	Hence, the two linearly independent solutions of~\eqref{eq:f1} are given by 
	\begin{equation}\label{eq:f1pm}
		f^+_1(x) = x^{1/2} I_\nu(x)\and f^-_1(x) =  x^{1/2}K_\nu(x).
	\end{equation}	
	Using the first equation in~\eqref{eq:f1f22} we also find the second component of the solution to the system~\eqref{eq:f1f2} associated with $f^+_1$
	\begin{equation}\label{eq:f2p}
	\begin{split}
		f_2^+(x) 
		& = \pm\ii \left((x^{1/2} I_\nu(x))' -\frac{\aa}{x^{1/2}}I_\nu(x)\right)\\
		& = \pm \ii\left(x^{1/2} I_{\nu - 1}(x) - \frac{\nu+\aa-1/2}{x^{1/2}}I_\nu(x)\right),
		\end{split}	
	\end{equation}  
	where we used that $I_\nu'(x) = I_{\nu - 1}(x) -\frac{\nu}{x} I_\nu(x)$; \cf \cite[9.6.26]{AS64}.
	The second component associated with $f_1^-$ can be recovered from the first formula in~\eqref{eq:f1f22}
	\begin{equation}\label{eq:f2m}
	\begin{split}
	f_2^-(x) & = \pm\ii\left((x^{1/2}K_\nu(x))'
	-\frac{\aa}{x^{1/2}} K_\nu(x)\right)\\
	&=\pm\ii \left(
	-x^{1/2}K_{\nu-1}(x)-\frac{\nu +\aa -1/2}{x^{1/2}}K_\nu(x)
	 \right),
	\end{split}	
	\end{equation}
	where we used that $K_\nu'(x) = -K_{\nu-1}(x) -\frac{\nu}{x}K_\nu(x)$; \cf~\cite[9.6.26]{AS64}.
	We have found that the system of differential equations $\tau_\aa f = \pm \ii f$ has two linearly independent solutions
	$f^+ = (f_1^+,f_2^+)$ and $f^- = (f_1^-,f_2^-)$ with respective components given by~\eqref{eq:f1pm},~\eqref{eq:f2p}, and~\eqref{eq:f2m}.
	
	Now, we develop the asymptotic expansions
	of the components $f_2^+$, $f_1^-$ and $f_2^-$ in the limit $x\arr0^+$.
	For the second component of the solution $f^+$ we get using~\cite[Eq. 9.6.7 and Eq. 9.6.10]{AS64}
	and taking that $I_{-1}(x) = I_1(x)$ into account for $\aa = \frac12$
	\begin{equation}\label{eq:f2p_asymp}
		f_2^+(x)\sim_{x\arr 0^+} 
		\begin{cases}
		\pm\frac{\ii x^{\nu+3/2}}{\nu(\nu+1)2^{\nu+1}\G(\nu)}, & \aa > \frac12,\\
		\pm \frac{\ii x^{3/2}}{2} , & \aa = \frac12, \\
		\pm  \frac{\ii x^{\nu-1/2}}{2^{\nu-1}\G(\nu)}, & \aa < \frac12.
	\end{cases}	
	\end{equation}
	For the first component of the solution $f^-$ we find using the asymptotics $K_\nu$ as $x\arr 0^+$
	(see \cite[9.6.8, 9.6.9]{AS64})  we find that
	\begin{equation}\label{eq:f1m_asymp}
	f_1^-(x) \sim_{x\arr 0^+}
	\begin{cases}
	 -x^{1/2}\ln(x),& \aa = \tfrac12,\\
	 2^{\nu-1}\Gamma(\nu)x^{1/2-\nu},\quad &\aa \ne \tfrac12.
	\end{cases}	
	\end{equation} 	 
	Finally, for the second component of the solution $f^-$ we find again using
	the asymptotics $K_\nu$ as $x\arr 0^+$
	that
	\begin{equation}\label{eq:f2m_asymp}
		f_2^-(x) \sim_{x\arr 0^+} 
		\begin{cases}
		\pm (-\ii) 2^\nu\Gamma(\nu+1)x^{-\nu-\frac12}, &\aa > \tfrac12,\\
		\pm (-\ii) 2^{-\nu}\Gamma(1-\nu) x^{\nu-\frac12},
		&\aa \in\big(-\tfrac12, \tfrac12\big],\\
		\pm\ii x^{1/2}\ln(x),& \aa = -\tfrac12,\\
		\pm (-\ii) 2^{\nu- 2} \Gamma(\nu - 1)x^{3/2-\nu},& \aa <-\tfrac12.
	\end{cases}	
	\end{equation}
	The remaining part of the proof is split into the analysis of the three cases 
	\begin{myenum}
	\item $b < \infty$, $| \aa | \ge \frac12$.
	\item $b = \infty$, $|\aa| \ge \frac12$.	  
	\item $b = \infty$, $\aa \in (-\frac12,\frac12)$.
	\end{myenum}
	\noindent \underline{\emph{The case $b < \infty$ and $|\aa| \ge \tfrac12$.}}
	Since $I_\nu$ is a bounded function on $\ov\cI$, we obtain that $f^+_1\in L^2(\cI)$. The asymptotics of $f_2^+$ in~\eqref{eq:f2p_asymp} yields that 	  $f_2^+\in L^2(\cI)$ and hence $f^+\in L^2(\cI;\dC^2)$. On the other hand, according to the asymptotics~\eqref{eq:f2m_asymp} we have $f_2^-\notin L^2(\cI)$ for $\aa \ge \frac12$ and according to~\eqref{eq:f1m_asymp} we have $f_1^-\notin L^2(\cI)$ for $\aa \le -\frac12$. Hence, $f^-\notin L^2(\cI;\dC^2)$ and we conclude that the deficiency indices of $\sfT_{0,\aa}''$ are $(1,1)$. 
	
	Notice that the solution
	$f^+$ lies left and right in $L^2(\cI;\dC^2)$ while the solution $f^-$ lies right in $L^2(\cI;\dC^2)$ and does not lie left in $L^2(\cI;\dC^2)$ in the sense of Definition~\ref{dfn:leftright}. Hence, we conclude from Proposition~\ref{prop:Weyl} and Definition~\ref{dfn:lplc} that $\tau_\aa$ is in this case limit-point at $x = 0$ and limit-circle at $x = b$. 
	Moreover, recall that $\tau_\aa$ is also regular at the
        endpoint $x = b$. Hence, the Green formula in
        Proposition~\ref{prop:Green}, combined with
        Proposition~\ref{prop:regular} and
        Proposition~\ref{prop:lplc}\,(i) yield that
        $\{\dC,\G_0,\G_1\}$ defined as in~\eqref{eq:bt} is a boundary
        triple for $\sfT_\aa$ in the sense of
        Definition~\ref{dfn:boundary_triple}.

        We turn now to show that any self-adjoint extension of $\sfT_{0,\aa}''$ has purely
          discrete spectrum.
        To do so, we show that  $\dom \sfT_\aa \subset
        H^1(\cI;\dC^2)$ in the case that $|\alpha|>1/2$ and that $\dom \sfT_\aa \subset
        H^{\frac{1}{2}}(\cI;\dC^2)$ in the case that $|\alpha|=1/2$: these give immediately the claim thanks to the compactness of the embedding of $H^1(\cI;\dC^2)$
        	and of $H^\frac12(\cI;\dC^2)$ into $L^2(\cI;\dC^2)$.
        Let $f=(f_1,f_2)^\top \in \dom \sfT_\aa$: from \eqref{eq:maximal_alpha} we
        have that $f_1,f_2 \in L^2(\cI)$, $f_1,f_2 \in {\rm AC}_{\rm
          loc}(\cI)$, $f_1'- \tfrac{\alpha}{x}f_1, f_2'+ \tfrac{\alpha}{x}f_2
        \in L^2(\cI)$. Let $\xi \in C_0^{\infty}([0,b))$ be such that $\xi(x)=1 $ if 
        $x \in [0,b/2]$ and for $j=1,2$ define the function $g_j\colon\dR_+\arr\dC^2$ by
        \begin{equation*}
          g_j(x):= \begin{cases}
            f_j(x) \xi(x), \quad &\text{ if }x\in \cI,\\ 
            0, \quad &\text{otherwise}.
          \end{cases}
        \end{equation*}
        It is clear that $g_j \in L^2(\dR_+), g_j \in {\rm AC}_{\rm loc}(\dR_+)$
        and $g_1'- \tfrac{\alpha}{x}g_1, g_2'+ \tfrac{\alpha}{x}g_2
        \in L^2(\dR_+)$.
        In the following we assume without
        loss of generality that $\alpha \geq 1/2$, since
        the proof in the case that $\alpha \leq -1/2$ can be done analogously switching the
        roles of $f_1$ and $f_2$. 
        We observe that
        \begin{equation}\label{eq:conditions.maximal.radial}
          g_1' - \frac{\alpha}{x}g_1 =
          x^{\alpha} (x^{-\alpha} g_1)' \in L^2(\dR_+),
          \qquad
          g_2' + \frac{\alpha}{x}g_2 =
          x^{-\alpha} (x^{\alpha} g_2)' \in L^2(\dR_+).
        \end{equation}
        From \cite[Prop. 2.2 (i)]{CP18} and \cite[Prop. 2.4 (i)]{CP18}, with $a=-\alpha \leq -1/2<1/2$, there exists a constant $A_2 \in \dC$ such
        that
        \begin{align}
          \label{eq:behaviour.in.the.origin}
        &g_2(x) = \frac{A_2}{x^{\alpha}} + o(\sqrt{x}) \quad \text{ as }x\arr
          0^+,\\
          \label{eq:behaviour.everywhere}
        &\frac{g_2}{x} - \frac{A_2}{x^{1+\alpha}} \in L^2(\dR_+).
        \end{align}
        Since $\alpha \geq 1/2$ and $g_2 \in L^2(\dR_+)$, \eqref{eq:behaviour.in.the.origin}
        gives $A_2 = 0$ and \eqref{eq:behaviour.everywhere} implies
        that ${g_2}/{x}\in L^2(\dR_+)$.
        We now distinguish two cases.
        
	\noindent {\emph{The case $\alpha>1/2$.}}
        Thanks to the Hardy inequality \cite[Prop. 2.4\,(ii)]{CP18}, with $a=\alpha>1/2$, we have ${g_1}/{x} \in L^2(\dR_+)$.
        We finally conclude that
        ${g}/{x} \in L^2(\cI;\dC^2)$, that gives ${f|_{(0,b/2)}}/{x}\in
        L^2((0,b/2);\dC^2)$. Moreover, since  $f|_{(b/2,b)} \in
        L^2((b/2,b);\dC^2)$ and ${1}/{x}$ is bounded with bounded inverse for $x
        \in [b/2, b]$, we conclude that ${f}/{x}\in
        L^2(\cI;\dC^2)$.
        From \eqref{eq:maximal_alpha} we have $f' \in
        L^2(\cI;\dC^2)$, that is the claim.

\noindent {\emph{The case $\alpha = 1/2$.}}
Thanks to the Hardy inequality
\cite[Prop. 2.4 (iii)]{CP18}, with $R = b$,  we
have that $g_{1}/(x \ln(b/x)) \in L^2(\dR_+)$, that implies that 
$g_{1}/\sqrt{x} \in
L^2(\dR_+)$. Since $g_2/x \in L^2(\dR_+)$ implies $g_2/\sqrt{x} \in
L^2(\dR_+)$, we conclude that $g/\sqrt{x} \in
L^2(\dR_+;\dC^2)$ and so $f / \sqrt{x} \in
L^2(\cI;\dC^2)$. From \eqref{eq:maximal_alpha}, we conclude that
$\sqrt{x}f' \in L^2(\cI;\dC^2)$.
Let us consider the Lipschitz domain 
\[
	\Omega := \big\{x = (x_1,x_2) \in \dR^2 \colon |x| <\omega, x_1>0,x_2 > 0 \big\}
\]
and the function
$F \in L^2(\Omega;\dC^2)$ defined as $F(x) =f(|x|)$. Since $\sqrt{x}f' \in
L^2(\cI;\dC^2)$, 
we have $F\in H^1(\Omega;\dC^2)$ and
by the trace theorem~\cite[Thm. 3.38]{McL} for Lipschitz domains
$F|_{\p\Omega
  \cap \{x_2 = 0\}} \in H^{\frac12}(\p\Omega\cap \{x_2 = 0\};\dC^2)$. Since this trace can be identified with $f$, we get $f \in H^{\frac12}(\cI;\dC^2)$, that is the claim.

	\medskip
	\noindent \underline{\emph{The case $b = \infty$ and $|\aa| \ge \tfrac12$.}}
	Taking the asymptotics~\cite[Eq.~9.7.1]{AS64} of $I_\nu(x)$ as $x \arr \infty$ into account we obtain that $f^+_1\notin L^2(\cI)$.
	As in the analysis of the previous case
	we derive from the asymptotics
	~\eqref{eq:f1m_asymp} and~\eqref{eq:f2m_asymp} that $f^-\notin L^2(\cI;\dC^2)$ for $|\aa| \ge \frac12$.
	Hence, we obtain that the operator $\sfT_{0,\aa}''$ is essentially self-adjoint in this case.
	If $|\aa| > \frac12$,
	it follows from \cite[Lem. A.1]{FL21} and its proof that $\dom\sfT_{0,\aa} = H^1_0(\dR_+,\dC^2)$, so the claim of (ii) is completely proven.
	
	\medskip
	
	\noindent \underline{\emph{The case $b = \infty$ and $\aa \in(-\tfrac12,\tfrac12)$.}}
	As in the analysis of the previous case
	the asymptotics of $I_\nu(x)$ as $x \arr \infty$ yields that $f^+_1\notin L^2(\cI)$
	and hence $f^+\notin L^2(\cI;\dC^2)$.
	On the other hand, we derive from the asymptotics~\eqref{eq:f1m_asymp} and~\eqref{eq:f2m_asymp} that $f^-\in L^2(\cI;\dC^2)$ for $\aa\in (-\frac12, \frac12)$.
	Hence, we obtain that the operator $\sfT_{0,\aa}''$ has deficiency indices $(1,1)$.
	
	The defect subspaces can be characterised as
	\[
		\cN_{\aa,\pm\ii} = \spn\left\{
		\begin{pmatrix}
		x^{1/2}K_{\frac12-\aa}(x)\\
		\mp\ii x^{1/2} K_{\frac12+\aa}(x)
		\end{pmatrix}
		\right\}.
	\]
	In view of von Neumann's extensions theory~\cite[Thm. 13.10]{S12} self-adjoint extensions of $\sfT_{0,\aa}''$ are parametrized as in~\eqref{eq:extensions} and thus the claim of (iii) is also proven.
\end{proof}

\subsection{The spin-orbit differential expression in spherical coordinates}
\label{ssec:spin-orbit}
The main goal of this subsection is to compute the representation of the spin-orbit differential expression in the natural coordinates on the unit sphere. The spin-orbit differential expression appears in the representation of the Dirac differential expression $\cD$ in the spherical coordinates; see \cite[\S 20.3]{W03} and~\cite[\S 4.6]{T92}. 	

To this aim we first find that the components of the orbital angular momentum differential expression
\[	\cL = -\ii{\bf x}\times \nabla = (\cL_1,\cL_2,\cL_3)
\]
are given by
\[
	\cL_1 = -\ii(x_2\p_3 - x_3\p_2),\qquad \cL_2 = -\ii(x_3\p_1 - x_1\p_3),\qquad \cL_3 = -\ii(x_1\p_2 - x_2\p_1).
\]
According to Appendix~\ref{app:pderiv} the expressions for $\p_1,\p_2,\p_3$ in terms of $\p_r,\p_\tt,\p_\phi$ are given by
\begin{equation}\label{eq:p123}
	\begin{cases}
	\p_1 = \cos\phi\sin\tt \p_r
	+
	\frac{\cos\phi\cos\tt\p_\tt}{r}  
	- 
	\frac{\sin\phi}{\sin\tt}\frac{\p_\phi}{ r},
	\\
	\p_2 = \sin\phi\sin\tt\p_r + \frac{\sin\phi\cos\tt\p_\tt}{r} + \frac{\cos\phi}{\sin\tt}\frac{\p_\phi}{r},
	\\
	\p_3 = \cos\tt\p_r -\frac{\sin\tt\p_\tt}{r}.
	\end{cases}
\end{equation}
The next step is to express $\cL_1, \cL_2, \cL_3$ in terms of $\p_\tt$
and $\p_\phi$. In order to compute $\cL_1$ we combine the expression
for $x_2$, $x_3$ in~\eqref{eq:x123} with the formulae for $\p_2,\p_3$
in~\eqref{eq:p123}
\[
\begin{aligned}
	\cL_1 & = -\ii r\sin\phi\sin\tt
	\left(\cos\tt\p_r -\frac{\sin\tt\p_\tt}{r}\right)\\
	& \qquad +\ii r\cos\tt\left(\sin\phi\sin\tt\p_r + \frac{\sin\phi\cos\tt\p_\tt}{r} + \frac{\cos\phi}{\sin\tt}\frac{\p_\phi}{r}\right)
	\\
	& = 
	\ii\left(\sin\phi\sin^2\tt+\sin\phi\cos^2\tt\right)\p_\tt+
	\ii \cot\tt \cos\phi\p_\phi  = \ii\sin\phi\p_\tt + \ii\cot\tt\cos\phi\p_\phi.
\end{aligned}
\]
To compute $\cL_2$ we combine the expression for $x_1$, $x_3$ in~\eqref{eq:x123} with the formulae for $\p_1,\p_3$ in~\eqref{eq:p123}
\[
\begin{aligned}
	\cL_2 & = -\ii r\cos\tt\left(	
	\cos\phi\sin\tt \p_r
	+ \frac{\cos\phi\cos\tt\p_\tt}{r}  
		- \frac{\sin\phi}{\sin\tt }\frac{\p_\phi}{r} 
	\right)\\
		& \qquad +\ii r\cos\phi\sin\tt	\left(\cos\tt\p_r -\frac{\sin\tt\p_\tt}{r}\right)\\
	& = 
	-\ii\left(\cos\phi\cos^2\tt + \cos\phi\sin^2\tt\right)\p_\tt+
	\ii\cot\tt\sin\phi\p_\phi	= -\ii\cos\phi\p_\tt+  \ii\cot\tt\sin\phi\p_\phi.	
\end{aligned}	
\]
For the derivation of $\cL_3$ we combine the expression for $x_1$, $x_2$ in~\eqref{eq:x123} with the formulae for $\p_1,\p_2$ in~\eqref{eq:p123}
\[
\begin{aligned}
\cL_3 & = -\ii (x_1 \p_2 - x_2 \p_1) = -\ii r\cos\phi\sin\tt\left(\sin\phi\sin\tt\p_r + \frac{\sin\phi\cos\tt\p_\tt}{r} + \frac{\cos\phi}{\sin\tt}\frac{\p_\phi}{r}\right)\\
& \qquad +\ii r\sin\phi\sin\tt\left(\cos\phi\sin\tt \p_r
+\frac{\cos\phi\cos\tt\p_\tt}{r} - \frac{\sin\phi}{\sin\tt }\frac{\p_\phi}{r} 
\right) \\
& = \ii\left(-\sin\phi\cos\phi\sin\tt\cos\tt +\sin\phi\cos\phi\sin\tt\cos\tt\right)\p_\tt
+\ii\left(-\cos^2\phi-\sin^2\phi\right)\p_\phi = -\ii\p_\phi.
\end{aligned}	
\]
Using the expressions for $\cL_j$, $j=1,2,3$ we compute the auxiliary differential expression  
\[
\begin{aligned}
	\cS :=\s_1\cL_1 +\s_2\cL_2+\s_3\cL_3
		=\begin{pmatrix}-\ii\p_\phi & 
		-e^{-\ii\phi}\p_\tt + \ii \cot\tt e^{-\ii\phi} \p_\phi    \\
		e^{\ii\phi}\p_\tt + \ii\cot\tt e^{\ii\phi}\p_\phi  & \ii\p_\phi \end{pmatrix}.
\end{aligned}	
\]
Finally, the \emph{spin-orbit differential expression} is given by
\begin{equation}\label{eq:spin_orbit}
	\cK = \sfI_4 + \begin{pmatrix} \cS &0\\ 0 & \cS\end{pmatrix},
\end{equation}
where $\sfI_4$ is the identity matrix in $\dC^4$.

For further analysis we introduce also the matrix-valued function
\begin{equation}\label{eq:alpha_r}
	\aa_r := \sum_{j=1}^3 \aa_j \frac{x_j}{r} = 
	\begin{pmatrix} 0 & 0&  \cos\tt & e^{-\ii\phi}\sin\tt\\
	0&0 &e^{\ii\phi}\sin\tt	&		-\cos\tt\\
	\cos\tt & e^{-\ii\phi}\sin\tt &0 &0\\
	e^{\ii\phi}\sin\tt	&		-\cos\tt&0&0
	\end{pmatrix}.
\end{equation}
\section{Spectral analysis of the one-dimensional model Dirac-type operator}
\label{sec:Dirac_model}
In this section we consider a family of  one-dimensional Dirac operators on the interval $\cI_\omg = (0,\omg)$, $\omg \in (0,\pi)\sm \{\frac{\pi}{2}\}$. These operators arise in the orthogonal decomposition of the spin-orbit operator on the spherical cap. We show that these Dirac-type operators are self-adjoint in
the Hilbert space $L^2(\cI_\omg;\dC^4)$ and that their spectrum is discrete. Furthermore, we explicitly find their eigenvalues as solutions of certain transcendental equations and characterise
the respective associated eigenfuctions. Finally, we obtain estimates on the size of the spectral gaps of these model Dirac-type operators.

First, we recall the definitions of the
$2\times2$ matrix differential expression on the interval $\cI_\omg$  
\begin{equation}\label{key}
	\tau_k 
	= 
	\begin{pmatrix} 
	k + 1 &-\frac{\dd}{\dd\tt}
			-\left(k+\frac12\right)\cot\tt\\
	\frac{\dd}{\dd\tt} - \left(k+\frac12\right)\cot\tt & -k
	\end{pmatrix},\qquad k\in\dZ,		
\end{equation}
and the associated $4\times 4$ matrix differential expression on $\cI_\omg$ 
\[
	\cT_k = 
	\begin{pmatrix} \tau_k & 0\\
	0 & \tau_k\end{pmatrix}.
\]
Recall also that $2\times 2$ unitary matrix
$\sfA_\omg$ is defined by
\begin{equation}\label{eq:matrix_A}
	\sfA_\omg = 
	\begin{pmatrix}  
		\ii\sin\omg     & -\ii\cos\omg  \\
		-\ii\cos\omg    & -\ii\sin\omg
	\end{pmatrix},
\end{equation}
and that the coupling of two Dirac operators on the interval $\cI_\omg$ 
is introduced as
\begin{equation}\label{eq:Dk}
\begin{aligned}
	\sfT_k \psi & =\cT_k\psi,\\
	\dom\sfT_k &
	\!= \!
	\left\{
	\psi \!\in\! {\rm AC}_{\rm loc}(\cI_\omg;\dC^4)\colon\!
	\psi,
	\cT_k \psi\in L^2(\cI_\omg;\dC^4),
	\left(\begin{smallmatrix}
		\psi_1(\omg)
		\\
		\psi_2(\omg)
	\end{smallmatrix}\right)\!=\! 
	\sfA_\omg	\left(\begin{smallmatrix}\psi_3(\omg)
		\\
		\psi_4(\omg)
	\end{smallmatrix}\right)
\right\}\!.
\end{aligned}
\end{equation}
In the first proposition of this section we establish the self-adjointness of $\sfT_k$
and show that the spectrum of $\sfT_k$ is purely discrete.
\begin{prop}\label{prop:spectrum.Tk}
	The operator $\sfT_k$, $k\in\dZ$, in~\eqref{eq:Dk} is self-adjoint in $L^2(\cI_\omg;\dC^4)$, and the restriction of $\sfT_k$ to the subspace of $\dom\sfT_k$ 
	\begin{equation}\label{eq:cD}
		\cM := \left\{
		\psi\in C^\infty_0((0,\omg];\dC^4)\colon
		\left(\begin{smallmatrix}
			\psi_1(\omg)
			\\
			\psi_2(\omg)
		\end{smallmatrix}\right)\!=\! 
		\sfA_\omg	\left(\begin{smallmatrix}\psi_3(\omg)
			\\
			\psi_4(\omg)
		\end{smallmatrix}\right)
		\right\}
	\end{equation}
	is essentially self-adjoint. Finally, the spectrum of $\sfT_k$
        is purely discrete.
      \end{prop}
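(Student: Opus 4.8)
The plan is to split the differential expression $\cT_k$ into two scalar Dirac systems coupled only through the boundary condition at $\tt = \omg$, and to handle the two endpoints separately. First I would observe that $\cT_k = \mathrm{diag}(\tau_k,\tau_k)$ acts on $\dC^4 = \dC^2\oplus\dC^2$, and that the interval $\cI_\omg = (0,\omg)$ has a \emph{regular} endpoint at $\tt = \omg$ (the coefficients $\cot\tt$ and $(k+\tfrac12)\cot\tt$ are smooth up to $\omg < \pi$, since $\omg\neq 0,\pi$) and a \emph{singular} endpoint at $\tt = 0$ (because of the $\cot\tt \sim 1/\tt$ singularity). The first key step is therefore to analyse the endpoint $\tt = 0$ for the scalar expression $\tau_k$, using the change of variables and comparison with $\tau_\aa$ from Subsection~\ref{ssec:1D_Dirac}: near $\tt = 0$, $\tau_k$ is a relatively bounded perturbation of the model expression in~\eqref{eq:tau_alpha} with $\aa = k + \tfrac12$, which has $|\aa|\ge\tfrac12$ for every $k\in\dZ$. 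Hence I expect that $\tau_k$ is in the limit-point case at $\tt = 0$ for all $k\in\dZ$, so no boundary condition is needed there. Making this comparison rigorous — controlling the difference between the $\cot\tt$ coefficient and its $1/\tt$ leading term, and invoking Proposition~\ref{prop:1D_Dirac}(i)--(ii) together with the limit-point classification machinery from Appendix~\ref{app:Dirac} — is the technical heart of the argument.

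Once the limit-point property at $\tt=0$ is established for $\tau_k$, and hence for $\cT_k$ on each $\dC^2$ block, I would turn to the regular endpoint $\tt=\omg$. Here the operator $\sfT_k$ is the maximal operator restricted by the single boundary condition $(\psi_1(\omg),\psi_2(\omg))^\top = \sfA_\omg(\psi_3(\omg),\psi_4(\omg))^\top$. Since $\sfA_\omg$ in~\eqref{eq:matrix_A} is unitary, this boundary condition is of the form that makes the associated sesquilinear (Lagrange) boundary form vanish: one computes that for $\psi,\varphi\in\dom\sfT_k$ the boundary contribution at $\omg$ is a Hermitian pairing that vanishes precisely because $\sfA_\omg^*\sfA_\omg = I_2$ together with the structure of the symplectic form $\bigl(\begin{smallmatrix}0&1\\-1&0\end{smallmatrix}\bigr)$ of the Dirac expression. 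Combining the limit-point case at $\tt=0$ with a self-adjoint boundary condition at the regular endpoint $\tt=\omg$ yields self-adjointness of $\sfT_k$ via the standard theory (Proposition~\ref{prop:lplc} and Proposition~\ref{prop:regular} from the appendix). Essential self-adjointness of the restriction to $\cM$ follows because $C^\infty_0((0,\omg];\dC^4)$ functions satisfying the coupling condition form a core: they are dense in $\dom\sfT_k$ in the graph norm, the only subtlety being approximation near the singular endpoint $\tt=0$, which is again taken care of by the limit-point property (functions vanishing near $0$ are a core there).

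Finally, for discreteness of the spectrum I would show that $\dom\sfT_k$ embeds compactly into $L^2(\cI_\omg;\dC^4)$. Away from $\tt=0$ the expression $\tau_k$ is a regular first-order elliptic system, so elements of $\dom\sfT_k$ are locally $H^1$ with a uniform bound on compact subsets of $(0,\omg]$; near $\tt=0$ the potential term $(k+\tfrac12)\cot\tt$ blows up, which — exactly as in the proof of Proposition~\ref{prop:1D_Dirac}(i) for $|\aa|\ge\tfrac12$ — forces the quadratic form $\int_{\cI_\omg}\bigl(|\psi'|^2 + \tt^{-2}|\psi|^2\bigr)\,\dd\tt$ to be finite (using the Hardy inequalities from~\cite{CP18} in the critical and supercritical cases), giving either an $H^1$ or an $H^{1/2}$ bound. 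Since both $H^1(\cI_\omg;\dC^4)$ and $H^{1/2}(\cI_\omg;\dC^4)$ embed compactly into $L^2(\cI_\omg;\dC^4)$ on the bounded interval $\cI_\omg$, the resolvent of $\sfT_k$ is compact and its spectrum is purely discrete. The main obstacle I anticipate is the first step: transferring the limit-point classification from the exactly-solvable model $\tau_\aa$ to $\tau_k$, since the Bessel-type asymptotics used in Proposition~\ref{prop:1D_Dirac} are no longer available verbatim and one must argue by perturbation of the singular endpoint instead.
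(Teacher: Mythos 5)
You establish the same facts as the paper but with different scaffolding, and your route is viable. The paper does not classify the coupled $4\times4$ expression at its endpoints at all: it writes $\sfT_k=-\wt{\sfT}_k+\mathrm{diag}(\sfB_k,\sfB_k)$, where $\wt{\sfT}_k$ is defined through a boundary triple for $\check{\sfT}_{0,k}''\oplus\check{\sfT}_{0,k}''$ (two copies of the exactly solvable $1/\tt$ comparison operator of Subsection~\ref{ssec:1D_Dirac} with $\aa=k+\frac12$), the coupling at $\omg$ being encoded by a Hermitian parameter matrix $\wh{\sfA}_\omg$ so that self-adjointness of $\wt{\sfT}_k$ is immediate from Proposition~\ref{prop:bt_sa}, while $\sfB_k$ (containing $(k+\frac12)(\cot\tt-\frac{1}{\tt})$ and the diagonal constants) is a bounded symmetric perturbation; essential self-adjointness on $\cM$ is then obtained by computing $(\wt{\sfT}_k\upharpoonright\cM)^*$ via integration by parts, and discreteness by viewing $\wt{\sfT}_k$ as a finite-rank resolvent perturbation of an orthogonal sum of self-adjoint extensions of $\check{\sfT}_k$, each discrete by Proposition~\ref{prop:1D_Dirac}\,(i). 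You instead work directly with the coupled system: limit-point at $\tt=0$ transferred from $\tau_\aa$ (legitimate, and really the same observation the paper uses, since the transfer is by the bounded perturbation $\cot\tt-\frac{1}{\tt}$, which leaves deficiency indices, hence the endpoint classification, unchanged), a Lagrangian boundary condition at the regular endpoint $\omg$, a core argument for $\cM$, and discreteness via the $H^1$/$H^{1/2}$ regularity of the maximal domain established inside the proof of Proposition~\ref{prop:1D_Dirac}\,(i) together with compact embedding. Your version is more elementary and von Neumann--flavoured and avoids the auxiliary operator $\wt{\sfT}_k$; the paper's version avoids having to extend the $2\times2$ endpoint theory of the appendix to the coupled system, because the boundary-triple formalism absorbs the coupling into $\wh{\sfA}_\omg$ in one stroke.

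Two points should be stated precisely when you write this up. First, the vanishing of the boundary form at $\omg$ is \emph{not} a consequence of unitarity of $\sfA_\omg$ alone (e.g.\ $I_2$ is unitary and the form does not vanish on the corresponding graph); what your ``one computes'' must deliver is the identity $\sfA_\omg^*\left(\begin{smallmatrix}0&1\\-1&0\end{smallmatrix}\right)\sfA_\omg=-\left(\begin{smallmatrix}0&1\\-1&0\end{smallmatrix}\right)$, which does hold here because $\sfA_\omg=\ii S$ with $S$ real, symmetric and $S^2=I_2$. Second, isotropy of the boundary subspace only gives symmetry; you must also record maximality: the condition is the graph of a map $\dC^2\to\dC^2$, hence a two-dimensional (Lagrangian) plane in the four-dimensional boundary-value space at $\omg$, and combined with the limit-point property at $0$ (so the deficiency indices are $(2,2)$) this is what upgrades the symmetric restriction to a self-adjoint operator. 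Both steps are routine, but they are exactly the content that the paper replaces by Proposition~\ref{prop:bt_sa}.
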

\begin{proof}
	The proof of self-adjointness of $\sfT_k$ is based on a perturbation argument. We decompose $\sfT_k$ into a sum of the Dirac-type operator whose self-adjointness is established by the methods of Subsection~\ref{ssec:1D_Dirac} and a bounded self-adjoint perturbation.
	
	To this aim, we consider the auxiliary differential expression
	\[
		\check{\tau}_k = 
		\begin{pmatrix} 
		0 & 
		\frac{\dd}{\dd\tt}+
			\left(k+\frac12\right)\frac{1}{\tt}\\
		-\frac{\dd}{\dd\tt}+ 	
			\left(k+\frac12\right)\frac{1}{\tt}&
		0\end{pmatrix}.
	\]
	This differential expression is of the type~\eqref{eq:tau_alpha} with $\alpha = k+\frac12$. In particular, we immediately observe that $|\aa| \ge \frac12$ for all $k\in\dZ$.
	
	Let us associate the symmetric preminimal operator to $\check{\tau}_k$ as in~\eqref{eq:preminimal_alpha}
	\[
		\check{\sfT}_{0,k}''\psi 
		:=
		\check{\tau}_k\psi,
		\qquad 
		\dom\check{\sfT}_{0,k}'' 
		:= 
		C^\infty_0(\cI_\omg;\dC^2).
	\]
	According to Subsection~\ref{ssec:1D_Dirac} the adjoint of $\check{\sfT}_k := (\check{\sfT}_{0,k}'')^*$ is given by
	\begin{equation}\label{key}
		\check{\sfT}_k\psi 
		:=
		\check{\tau}_k\psi,
		\qquad
		\dom\check{\sfT}_k 
		:= 
		\big\{\psi\colon 
			\psi,\check{\tau}_k\psi\in L^2(\cI_\omg;\dC^2), \psi\in {\rm AC}_{\rm loc}(\cI_\omg;\dC^2)
		\big\}.		
	\end{equation}
	By Proposition~\ref{prop:1D_Dirac}\,(i), the deficiency indices of $\check{\sfT}_{0,k}''$ are $(1,1)$ and $\{\dC,\check{\G}_0,\check{\G}_1\}$ with $\check{\G}_0,\check{\G}_1\colon\dom\check{\sfT}_k\arr\dC$, $\check{\G}_0\psi := \psi_1(\omg)$, $\check{\G}_1\psi := \psi_2(\omg)$ is a boundary triple for its adjoint $\check{\sfT}_k$. 
	
	Further, consider the auxiliary symmetric operator 
	\begin{equation}\label{eq:auxiliary_symmetric}
	\wh{\sfT}_{0,k}'' = \check{\sfT}_{0,k}''\oplus \check{\sfT}_{0,k}''
	\end{equation}
	acting in the Hilbert space $L^2(\cI_\omg;\dC^4)$. The adjoint $\wh{\sfT}_k$ of $\wh{\sfT}_{0,k}''$ is given by 
	$\wh{\sfT}_k = \check{\sfT}_k\oplus\check{\sfT}_k$.
	It is easy to see that the deficiency indices
	of $\wh{\sfT}_{0,k}''$ are $(2,2)$ and that 
	$\{\dC^2,\wh{\G}_0,\wh{\G}_1\}$ with the mappings $\wh{\G}_0,\wh{\G}_1\colon\dom\wh{\sfT}_k \arr\dC^2$ given by
	\[
		\wh{\G}_0\psi := \begin{pmatrix}\psi_1(\omg)\\ \psi_3(\omg)\end{pmatrix}\and\wh{\G}_1\psi:= \begin{pmatrix} \psi_2(\omg)\\ \psi_4(\omg)\end{pmatrix}
	\]
	is a boundary triple for $\wh{\sfT}_k$ in the sense of Definition~\ref{dfn:boundary_triple}.
	Let the Hermitian matrix $\wh{\sfA}_\omg$ be defined by
	\[
		\wh{\sfA}_\omg := \begin{pmatrix}
		\tan\omg & -\frac{\ii}{\cos\omg}\\
		\frac{\ii}{\cos\omg} & \tan\omg\end{pmatrix}.
	\]
	By Proposition~\ref{prop:bt_sa} the operator 
	\[
		\wt{\sfT}_{k}\psi := \wh{\sfT}_k\psi,\qquad\dom\wt{\sfT}_k := \big\{\psi\in\dom\wh{\sfT}_k\colon \wh{\G}_1\psi = \wh{\sfA}_\omg\wh{\G}_0\psi\big\},
	\]
	is self-adjoint in the Hilbert space $L^2(\cI_\omg;\dC^4)$.
	
	Let us define the multiplication operator
	by a matrix-valued function in the Hilbert space $L^2(\cI_\omg;\dC^2)$
	\[
		\sfB_k\psi = 
		\begin{pmatrix} 
		k+1 & -\left(k+\frac12\right)\left(\cot\tt-\frac{1}{\tt}\right)\\
		-\left(k+\frac12\right)\left(\cot\tt-\frac{1}{\tt}\right) & -k
		\end{pmatrix}\psi.	
	\]
	It is not difficult to see that
	$\sfB_k$ is symmetric and since the entries of $\sfB_k$ are all bounded functions, the operator $\sfB_k$ is bounded in the Hilbert space $L^2(\cI_\omg;\dC^2)$. 

	It remains to notice that 
	\begin{equation}\label{eq:TwhT_k}
		\sfT_k = 
		-\wt{\sfT}_k + 
		\begin{pmatrix}\sfB_k &0\\ 0&\sfB_k\end{pmatrix}
	\end{equation}
	and since $\wt{\sfT}_k$ is self-adjoint and the perturbation $\begin{pmatrix}\sfB_k &0\\ 0&\sfB_k\end{pmatrix}$ is bounded and self-adjoint, the operator $\sfT_k$ is self-adjoint as well.
	
	The operator $\wt{\sfT}_k$ can be viewed as a finite-rank perturbation in the sense of resolvent differences of an orthogonal sum of two self-adjoint extensions of the symmetric operator $\check{\sfT}_k$. Since
	any self-adjoint extensions of $\check{\sfT}_k$ has purely discrete spectrum by Proposition~\ref{prop:1D_Dirac}\,(i), we get
	that $\wt{\sfT}_k$ has purely discrete spectrum as well. Hence, using the representation~\eqref{eq:TwhT_k} we conclude that $\sfT_k$ also has purely discrete spectrum.
	
	In view of the decomposition~\eqref{eq:TwhT_k}, in order to show that the restriction of $\sfT_k$ to the subspace $\cM\subset\dom\sfT_k$ defined in~\eqref{eq:cD} is essentially self-adjoint, it suffices to check that the densely defined operator $\sfS := \wt\sfT_k\uhr\cM$ is essentially self-adjoint. Since $\sfS$ is a restriction of the self-adjoint operator $\wt{\sfT}_{k}$, we conclude that $\sfS$ is symmetric. Moreover, the operator $\sfS$ is an extension of the symmetric operator $\wh{\sfT}_{0,k}''$ defined in~\eqref{eq:auxiliary_symmetric}. Hence, the adjoint $\sfS^*$
	of $\sfS$ is a restriction of $\wh{\sfT}_k$.
	Moreover, for any $\psi\in\dom\sfS$ and 
	$\phi\in\dom\wh{\sfT}_k$ we find via integration by parts that
	\[
	\begin{split}
		&(\sfS \psi,\phi)_{L^2(\cI_\omg;\dC^4)}
		- (\psi,\wh{\sfT}_k\phi)_{L^2(\cI_\omg;\dC^4)}\\
		&\qquad= 
		\int_0^\omg\left(
		\psi_2'\ov{\phi_1} - \psi_1'\ov{\phi_2}
		+\psi_4'\ov{\phi_3} - \psi_3'\ov{\phi_4}
		+\psi_2\ov{\phi_1'} - \psi_1\ov{\phi_2'}
		+\psi_4\ov{\phi_3'} - \psi_3\ov{\phi_4'}
		\right)\dd \tt\\
		&\qquad =
		\psi_2(\omg)\ov{\phi_1(\omg)} 
		-\psi_1(\omg)\ov{\phi_2(\omg)}
		+\psi_4(\omg)\ov{\phi_3(\omg)} - \psi_3(\omg)\ov{\phi_4(\omg)}\\
		&\qquad= -\left(
		\begin{pmatrix}
		\psi_1(\omg)\\
		\psi_3(\omg)
		\end{pmatrix},
		\begin{pmatrix}\phi_2(\omg)\\ \phi_4(\omg)
				\end{pmatrix}
		-\wh\sfA_\omg\begin{pmatrix}\phi_1(\omg)\\ \phi_3(\omg)
		\end{pmatrix}
		\right)_{\dC^2}.
	\end{split}	
	\]
	From the above computation we conclude that the adjoint of $\sfS$
	is characterised by
	\[
		\sfS^*\psi = \wh{\sfT}_k\psi,\qquad \dom\sfS^* = \big\{\psi\in\dom\wh{\sfT}_k\colon\wh{\G}_1\psi = \wh{\sfA}_\omg\wh{\G}_0\psi\big\}.
	\]
	Hence, $\sfS^*$ coincides with $\wt{\sfT}_k$ and it is thus self-adjoint.
	Therefore, the symmetric operator $\sfS$ is essentially self-adjoint.
      \end{proof}
       Further we make use of the following Hardy inequality.
\begin{lem}\label{lem:hardy.for.us}
  Let $\omega \in (0, \pi/2]$.   
For all $f \in C_0^{\infty}((0,\omega])$ the following inequality
holds true:
\begin{equation}\label{eq:hardy.for.us}
  \int_0^{\omega} |f'(\theta)|^2 \,\dd\theta \geq
  \frac14 \int_0^{\omega} \frac{|f(\theta)|^2}{\sin^2 \theta}
  \,\dd\theta
  +
  \frac{\pi^2}{16\omega^2} \int_0^{\omega} |f(\theta)|^2 \,\dd\theta.
\end{equation}
\end{lem}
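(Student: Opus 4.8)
The plan is to prove the two terms on the right-hand side separately, exploiting the fact that the weight $1/\sin^2\theta$ is a classical Hardy weight near $\theta = 0$ while the $\pi^2/(16\omega^2)$ term is a Poincaré-type contribution on the finite interval. For the first term, I would use the one-dimensional Hardy inequality with the optimal constant $1/4$: for $f \in C_0^\infty((0,\omega])$ one has $\int_0^\omega |f'(\theta)|^2\,\dd\theta \geq \frac14 \int_0^\omega \frac{|f(\theta)|^2}{\theta^2}\,\dd\theta$, and since $\sin\theta \leq \theta$ on $(0,\pi/2]$ this already gives $\int_0^\omega |f'|^2 \geq \frac14 \int_0^\omega \frac{|f|^2}{\sin^2\theta}\,\dd\theta$. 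The subtlety is that I need \emph{both} terms simultaneously, so a more careful argument is required: I would instead look for a function $\varphi$ on $(0,\omega)$ solving (or supersolving) the Euler--Lagrange equation $-\varphi'' = \frac{1}{4\sin^2\theta}\varphi + \frac{\pi^2}{16\omega^2}\varphi$, and then use the ground-state substitution (Agmon--Allegretto--Piepenbrink factorization): writing $f = \varphi g$, one gets $\int_0^\omega |f'|^2 = \int_0^\omega |\varphi g'|^2 + \int_0^\omega \left(\frac{1}{4\sin^2\theta} + \frac{\pi^2}{16\omega^2}\right)|f|^2\,\dd\theta + \text{boundary terms}$, provided the boundary terms vanish or have a favorable sign.

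Concretely, I would try the test function $\varphi(\theta) := \sqrt{\sin\theta}\,\cos\!\big(\tfrac{\pi\theta}{2\omega} - c\big)$ or, more simply, $\varphi(\theta) := \sqrt{\sin\theta}\; h(\theta)$ where $h$ solves an auxiliary equation; the factor $\sqrt{\sin\theta}$ is the natural candidate killing the $\frac{1}{4\sin^2\theta}$ singularity since $(\sqrt{\sin\theta})'' = -\tfrac14 \sqrt{\sin\theta}\,(1 + \cot^2\theta) \cdot(\dots)$—one computes $\left(\sqrt{\sin\theta}\right)'' + \tfrac14\sqrt{\sin\theta} = -\tfrac{1}{4\sin^2\theta}\sqrt{\sin\theta} + \tfrac14\sqrt{\sin\theta}$, wait, more precisely $-\left(\sqrt{\sin\theta}\right)''/\sqrt{\sin\theta} = \tfrac{1}{4\sin^2\theta} - \tfrac14$. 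So $\sqrt{\sin\theta}$ is an exact solution of $-\varphi'' = \left(\tfrac{1}{4\sin^2\theta} - \tfrac14\right)\varphi$. This means that $\int_0^\omega |f'|^2 \geq \int_0^\omega \left(\tfrac{1}{4\sin^2\theta} - \tfrac14\right)|f|^2\,\dd\theta$ with the correct sign of boundary term at $\theta = \omega$ (where $f$ need not vanish), which I would check is automatically nonnegative because $\varphi'(\omega)/\varphi(\omega) = \tfrac12\cot\omega \geq 0$ for $\omega \leq \pi/2$. This is not yet \eqref{eq:hardy.for.us}: I have an extra $-\tfrac14$ instead of $+\tfrac{\pi^2}{16\omega^2}$.

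To fix the constant I would interpolate: split $\int_0^\omega |f'|^2 = t\int_0^\omega|f'|^2 + (1-t)\int_0^\omega|f'|^2$ for suitable $t \in (0,1)$, apply the $\sqrt{\sin\theta}$ factorization to the first piece to extract $\frac{t}{4\sin^2\theta}$, and to the second piece apply the sharp Poincaré inequality on $(0,\omega)$ for functions vanishing at $0$, namely $\int_0^\omega |f'|^2 \geq \frac{\pi^2}{4\omega^2}\int_0^\omega |f|^2$ (ground state $\cos(\pi\theta/2\omega)$, Dirichlet at $0$, Neumann at $\omega$). One then needs $t = 1$ for the first term's constant to be $\tfrac14$, which forces $1-t=0$ and loses the Poincaré term—so the naive split fails, and I instead must use a single factorization with a genuinely $\omega$-dependent ground state. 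The correct approach: take $\varphi_\omega(\theta) := \sqrt{\sin\theta}\,\cos\!\big(\tfrac{\pi\theta}{2\omega}\big) \cdot (\text{correction})$, or better, observe that it suffices to prove $-\varphi'' \geq \big(\tfrac{1}{4\sin^2\theta} + \tfrac{\pi^2}{16\omega^2}\big)\varphi$ for some positive $\varphi$ with $\varphi'(\omega) \geq 0$; trying $\varphi(\theta) = \sqrt{\sin\theta}\,\psi(\theta)$ reduces this to $-\psi'' - \cot\theta\,\psi' \geq \big(\tfrac{\pi^2}{16\omega^2} + \tfrac14\big)\psi$, and the choice $\psi(\theta) = \cos(\tfrac{\pi\theta}{2\omega})$ gives $-\psi'' = \tfrac{\pi^2}{4\omega^2}\psi$ and $-\cot\theta\,\psi' = \tfrac{\pi}{2\omega}\cot\theta\,\tan(\tfrac{\pi\theta}{2\omega})\,\psi \geq 0$, so the left side is $\geq \tfrac{\pi^2}{4\omega^2}\psi \geq \big(\tfrac{\pi^2}{16\omega^2}+\tfrac14\big)\psi$ as long as $\tfrac{3\pi^2}{16\omega^2} \geq \tfrac14$, i.e. $\omega \leq \tfrac{\sqrt{3}\,\pi}{2}$, which holds for $\omega \leq \pi/2$. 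The boundary term at $\omega$ involves $\varphi'(\omega)/\varphi(\omega)$, and since $\psi(\omega) = 0$ this needs care—one instead uses $\psi(\theta) = \cos(\tfrac{\pi\theta}{2\omega} - \delta)$ with a small shift $\delta > 0$ to keep $\varphi(\omega) > 0$ and $\varphi'(\omega) \geq 0$, then lets $\delta \to 0^+$. \textbf{The main obstacle} is organizing this factorization so that the supersolution inequality, the positivity/sign of the boundary term at $\theta = \omega$, and the required numerical bound on $\omega$ all hold at once; I expect the clean route is the supersolution $\varphi$ above together with a limiting argument in the shift parameter, and verifying the elementary trigonometric inequality $\cot\theta \tan(\tfrac{\pi\theta}{2\omega}) \geq 0$ on $(0,\omega)$.
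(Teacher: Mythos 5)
Your overall strategy---a ground-state (supersolution) factorization, i.e.\ finding $\varphi>0$ on $(0,\omega]$ with $-\varphi''\ge\bigl(\tfrac{1}{4\sin^2\theta}+\tfrac{\pi^2}{16\omega^2}\bigr)\varphi$ and $\varphi'(\omega)/\varphi(\omega)\ge0$, then writing $f=\varphi g$---is sound, and it is essentially the mechanism behind the paper's proof, which instead reflects $f$ evenly, rescales to $(0,\pi)$, uses the known inequality $\int_0^\pi|g'|^2\ge\tfrac14\int_0^\pi\tfrac{|g|^2}{\sin^2\theta}+\tfrac14\int_0^\pi|g|^2$ (whose ground state is exactly $\sqrt{\sin\theta}$), and then compares weights pointwise via $\sin\theta\ge\tfrac{2\omega}{\pi}\sin\bigl(\tfrac{\pi\theta}{2\omega}\bigr)$. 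However, your concrete supersolution fails and the proposed repair does not close the gap. A minor slip first: $-(\sqrt{\sin\theta})''/\sqrt{\sin\theta}=\tfrac{1}{4\sin^2\theta}+\tfrac14$, not $-\tfrac14$ (harmless, it only makes your target look harder). The genuine problem is at $\theta=\omega$: your $\varphi(\theta)=\sqrt{\sin\theta}\,\cos\bigl(\tfrac{\pi\theta}{2\omega}\bigr)$ vanishes there while $f(\omega)$ need not, so the identity $\int_0^\omega|f'|^2=\int_0^\omega\varphi^2|g'|^2+\int_0^\omega\tfrac{-\varphi''}{\varphi}|f|^2+\tfrac{\varphi'(\omega)}{\varphi(\omega)}|f(\omega)|^2$ degenerates; indeed $-\varphi''/\varphi=\tfrac{1}{4\sin^2\theta}+\tfrac14+\tfrac{\pi^2}{4\omega^2}+\tfrac{\pi}{2\omega}\cot\theta\,\tan\bigl(\tfrac{\pi\theta}{2\omega}\bigr)$, which for $\omega<\pi/2$ has a non-integrable singularity at $\theta=\omega$, so the would-be conclusion $\int|f'|^2\ge\int(-\varphi''/\varphi)|f|^2$ reads ``finite $\ge+\infty$'' whenever $f(\omega)\ne0$.

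The shift trick cannot rescue this choice. With $\psi_\delta(\theta)=\cos\bigl(\tfrac{\pi\theta}{2\omega}-\delta\bigr)$ one gets $\tfrac{\varphi_\delta'(\omega)}{\varphi_\delta(\omega)}=\tfrac{\cot\omega}{2}-\tfrac{\pi}{2\omega}\cot\delta$, so the sign condition at $\omega$ forces $\cot\delta\le\tfrac{\omega\cot\omega}{\pi}\le\tfrac1\pi$, i.e.\ $\delta\ge\arctan\pi$: the shift is never small. But for any fixed $\delta>0$ the cross term in $-\varphi_\delta''/\varphi_\delta$ behaves like $\tfrac{\pi}{2\omega}\cot\theta\,\tan\bigl(\tfrac{\pi\theta}{2\omega}-\delta\bigr)\sim-\tfrac{\pi\tan\delta}{2\omega\,\theta}\to-\infty$ as $\theta\to0^+$, so the supersolution inequality fails on a whole interval $(0,c_\delta)$; since $f\in C_0^\infty((0,\omega])$ may be supported arbitrarily close to the origin, no fixed $\delta$, and no limit $\delta\to0^+$, works for all admissible $f$. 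The clean fix within your own framework is to take $\varphi(\theta)=\sqrt{\sin\bigl(\tfrac{\pi\theta}{2\omega}\bigr)}$: it is positive on $(0,\omega]$, satisfies $\varphi'(\omega)=0$, and $-\varphi''/\varphi=\tfrac{(\pi/2\omega)^2}{4\sin^2(\pi\theta/(2\omega))}+\tfrac{\pi^2}{16\omega^2}\ge\tfrac{1}{4\sin^2\theta}+\tfrac{\pi^2}{16\omega^2}$, the last step being precisely the elementary comparison $\tfrac{2\omega}{\pi}\sin\bigl(\tfrac{\pi\theta}{2\omega}\bigr)\le\sin\theta$ that the paper proves; with that choice your factorization argument goes through and is equivalent to the paper's reflection-and-rescaling proof.
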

\begin{proof}
To show \eqref{eq:hardy.for.us} we exploit
the following Hardy inequality: for all $g \in H_0^1((0,\pi))$
\begin{equation}\label{eq:hardy.sin}
  \int_0^{\pi} |g'(\theta)|^2 \,\dd\theta
  \geq  \frac{1}{4}\int_0^{\pi} \frac{|g(\theta)|^2}{\sin^2 \theta}\,\dd\theta
  + \frac{1}{4}\int_0^{\pi} |g(\theta)|^2 \,\dd\theta.
\end{equation}
Such inequality can be derived from the inequality
\begin{equation*}
  0 \leq \int_0^{\pi} \Big\vert g'(\theta) -
    \frac{\cos\theta}{2\sin{\theta}}g(\theta)\Big\vert^2 \, \dd\theta
\end{equation*}
expanding the square and integrating by parts: for a detailed proof 
and interesting details on how this inequality is related to
 Bessel-type operators we refer to \cite{GPS21}.
Let $f \in C_0^{\infty}((0,\omega])$ and define
\begin{equation*}
  \widetilde f \in H_0^1((0,\pi)), \qquad
  \widetilde f(\theta) :=
  \begin{cases}
    f(\frac{2\omega}{\pi}\theta), \quad& \theta\in (0,\frac{\pi}{2}],\\
    f(\frac{2\omega}{\pi}(\pi-\theta)), \quad& \theta\in (\frac{\pi}{2},\pi).
  \end{cases}
\end{equation*}
Since $\widetilde f$ is symmetric in $(0,\pi)$ with respect to the
point $\pi/2$, from \eqref{eq:hardy.sin} we have that
\begin{equation}\label{eq:hardy.ontheway}
  \int_0^{\frac{\pi}2}|\widetilde f'(\theta)|^{2}\,\dd\theta
  \geq  \frac{1}{4}\int_0^{\frac{\pi}2} \frac{|\widetilde f(\theta)|^2}{\sin^2 \theta}\,\dd\theta
  + \frac{1}{4}\int_0^{\frac{\pi}2} |\widetilde f(\theta)|^2 \,\dd\theta.
\end{equation}
After a change of variables, \eqref{eq:hardy.ontheway} gives
\begin{equation}\label{eq:almost.final}
  \int_0^\omega |f'(\theta)|^{2}\,\dd\theta
  \geq
  \frac{1}{4} \int_0^\omega \frac{(\frac{\pi}{2\omega})^2}{\sin^2
    {\frac{\pi \theta}{2\omega}}} |f(\theta)|^2\,\dd\theta
  +
  \frac{1}{4}  \left(\frac{\pi}{2\omega}\right)^2\int_0^\omega |f(\theta)|^2\,\dd\theta.
\end{equation}
In order to conclude \eqref{eq:hardy.for.us} from
\eqref{eq:almost.final}
we show that
\begin{equation}\label{eq:hard(y).inequality}
  \frac{(\frac{\pi}{2\omega})^2}{\sin^2{\frac{\pi \theta}{2\omega}}}
  \geq \frac{1}{\sin^2 \theta}, \quad \text{ for all }\theta \in (0,\omega).
\end{equation}
Since $\theta \in (0,\omega) \subset (0,\pi/2)$, this is equivalent to
show that the functions
\begin{equation}
  c_\theta(\omega):= \sin \theta - \frac{\sin
    (\frac{\pi}{2\omega}\theta)}{\frac{\pi}{2\omega}}, \quad
     \quad \text{ for }\omega \in
  \left(0,\frac{\pi}2\right]
\end{equation}
are non-negative for all $\theta \in (0,\omega)$. We have that $c_\theta(\pi/2) =0$ and
$\frac{\dd}{\dd\omega}c_\theta(\omega) \leq 0$ if and only if
\begin{equation*}
  \frac{\pi}{2\omega}\theta \leq \tan
  \left(\frac{\pi}{2\omega}\theta\right),
   \quad \text{ for }\omega \in
  \left(0,\frac{\pi}2\right], \, \theta \in (0,\omega),
\end{equation*}
that is true since $\tan 0 =0$ and $\frac{\dd}{\dd\varphi}(\tan \varphi)
\geq 1$ for $\varphi \in (0,\pi/2)$.
This shows \eqref{eq:hard(y).inequality} and completes the proof.
\end{proof}

\begin{prop}\label{prop:spectrum.outside}
For all $k \in \ZZ$ and $\omega<\pi/2$
\begin{equation}\label{eq:estimate.below.final}
\norm*{\left(\sfT_k -\frac12\right) \psi}_{L^2(\cI_\omg;\dC^4)} \geq
\frac{\pi}{4\omg}\norm{\psi}_{L^2(\cI_\omg;\dC^4)}, \quad \text{ for all }
\psi \in \cM.
\end{equation}
\end{prop}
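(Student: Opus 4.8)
The plan is to compute $\norm*{\big(\sfT_k-\tfrac12\big)\psi}^2_{L^2(\cI_\omg;\dC^4)}$ for $\psi\in\cM$ by integration by parts, to verify that the boundary contribution at $\tt=\omg$ vanishes thanks to the boundary condition built into $\cM$, and then to estimate the remaining bulk term from below by the Hardy inequality of Lemma~\ref{lem:hardy.for.us}.

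Since $\cT_k-\tfrac12=\mathrm{diag}(\tau_k-\tfrac12,\tau_k-\tfrac12)$, I would work block by block. Put $m:=k+\tfrac12$ and $D_\pm:=\pm\tfrac{\dd}{\dd\tt}-m\cot\tt$, so that $\tau_k-\tfrac12=\left(\begin{smallmatrix} m & D_-\\ D_+ & -m\end{smallmatrix}\right)$, and write $\psi_{\mathrm I}:=(\psi_1,\psi_2)^\top$, $\psi_{\mathrm{II}}:=(\psi_3,\psi_4)^\top$. For $\phi=(\phi_1,\phi_2)^\top\in C_0^\infty((0,\omg];\dC^2)$ I would expand $\norm*{(\tau_k-\tfrac12)\phi}^2=\int_0^\omg\big(|m\phi_1+D_-\phi_2|^2+|D_+\phi_1-m\phi_2|^2\big)\dd\tt$ and integrate by parts, using $2\,\mathrm{Re}(\phi_j'\overline{\phi_j})=(|\phi_j|^2)'$, $\tfrac{\dd}{\dd\tt}\cot\tt=-\tfrac{1}{\sin^2\tt}$, the pointwise identity $\phi_1\overline{D_-\phi_2}-\overline{\phi_2}\,D_+\phi_1=-(\phi_1\overline{\phi_2})'$, and the fact that $\phi$ vanishes near $\tt=0$. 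Together with $m^2(1+\cot^2\tt)=\tfrac{m^2}{\sin^2\tt}$, $m(m-1)=k^2-\tfrac14$ and $m(m+1)=(k+1)^2-\tfrac14$, this produces
\begin{equation*}
\begin{split}
  \norm*{\Big(\tau_k-\tfrac12\Big)\phi}^2
  &= \int_0^\omg\!\Big(|\phi_1'|^2+\big(k^2-\tfrac14\big)\tfrac{|\phi_1|^2}{\sin^2\tt}\Big)\dd\tt
  + \int_0^\omg\!\Big(|\phi_2'|^2+\big((k+1)^2-\tfrac14\big)\tfrac{|\phi_2|^2}{\sin^2\tt}\Big)\dd\tt \\
  &\quad + \big\langle\mathsf{S}\,\phi(\omg),\phi(\omg)\big\rangle_{\dC^2},
\end{split}
\end{equation*}
where the boundary term $-m\cot\omg\,|\phi_1(\omg)|^2+m\cot\omg\,|\phi_2(\omg)|^2-2m\,\mathrm{Re}\big(\phi_1(\omg)\overline{\phi_2(\omg)}\big)$ has been written as $\langle\mathsf{S}\,\phi(\omg),\phi(\omg)\rangle_{\dC^2}$ with the Hermitian matrix $\mathsf{S}:=-m(\cot\omg\,\s_3+\s_1)$.

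The heart of the argument is that these boundary terms cancel on $\cM$. Summing the two blocks and inserting $\psi_{\mathrm I}(\omg)=\sfA_\omg\psi_{\mathrm{II}}(\omg)$, the total boundary contribution equals $\big\langle\big(\sfA_\omg^{*}\mathsf{S}\sfA_\omg+\mathsf{S}\big)\psi_{\mathrm{II}}(\omg),\psi_{\mathrm{II}}(\omg)\big\rangle_{\dC^2}$. Writing $\sfA_\omg=\ii(\sin\omg\,\s_3-\cos\omg\,\s_1)$ and using $\s_1\s_3=-\s_3\s_1$, $\s_1^2=\s_3^2=I_2$, $\cos\omg=\sin\omg\cot\omg$ and $\sin^2\omg+\cos^2\omg=1$, a short computation shows that $\mathsf{S}$ and $\sfA_\omg$ anticommute; since $\sfA_\omg$ is unitary this is the same as $\sfA_\omg^{*}\mathsf{S}\sfA_\omg=-\mathsf{S}$, so the boundary contribution is zero. (This is the same algebraic compatibility of $\sfA_\omg$ with the form generated by integration by parts that underlies the self-adjointness of $\sfT_k$ in Proposition~\ref{prop:spectrum.Tk}.) Hence, for every $\psi\in\cM$,
\begin{equation*}
\begin{split}
  \norm*{\Big(\sfT_k-\tfrac12\Big)\psi}^2
  &= \sum_{j\in\{1,3\}}\int_0^\omg\!\Big(|\psi_j'|^2+\big(k^2-\tfrac14\big)\tfrac{|\psi_j|^2}{\sin^2\tt}\Big)\dd\tt \\
  &\quad + \sum_{j\in\{2,4\}}\int_0^\omg\!\Big(|\psi_j'|^2+\big((k+1)^2-\tfrac14\big)\tfrac{|\psi_j|^2}{\sin^2\tt}\Big)\dd\tt.
\end{split}
\end{equation*}

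To conclude, I would use $k^2-\tfrac14\ge-\tfrac14$ and $(k+1)^2-\tfrac14\ge-\tfrac14$ for every $k\in\dZ$, so that each summand is at least $\int_0^\omg\big(|\psi_j'|^2-\tfrac14\tfrac{|\psi_j|^2}{\sin^2\tt}\big)\dd\tt$; by Lemma~\ref{lem:hardy.for.us} (applicable since $\omg<\tfrac{\pi}{2}$ and each $\psi_j\in C_0^\infty((0,\omg])$) this is at least $\tfrac{\pi^2}{16\omg^2}\int_0^\omg|\psi_j|^2\,\dd\tt$. Summing over $j=1,\dots,4$ gives $\norm*{\big(\sfT_k-\tfrac12\big)\psi}^2\ge\big(\tfrac{\pi}{4\omg}\big)^2\norm{\psi}^2_{L^2(\cI_\omg;\dC^4)}$, which is \eqref{eq:estimate.below.final}. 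The only step that is not routine bookkeeping is the vanishing of the boundary term at $\tt=\omg$, which hinges entirely on the anticommutation of $\sfA_\omg$ with the matrix $\mathsf{S}$ produced by the integration by parts.
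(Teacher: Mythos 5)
Your proof is correct and follows essentially the same route as the paper: the same block-wise integration by parts producing the coefficients $k^2-\tfrac14=\alpha(\alpha-1)$ and $(k+1)^2-\tfrac14=\alpha(\alpha+1)$ with $\alpha=k+\tfrac12$, the same cancellation of the boundary term at $\tt=\omg$ using $\psi_{\mathrm I}(\omg)=\sfA_\omg\psi_{\mathrm{II}}(\omg)$, and the same application of Lemma~\ref{lem:hardy.for.us} after bounding the singular coefficients below by $-\tfrac14$. The only difference is cosmetic: you check the boundary cancellation via the anticommutation relation $\sfA_\omg^{*}\mathsf{S}\sfA_\omg=-\mathsf{S}$ for the boundary-form matrix $\mathsf{S}$, whereas the paper substitutes the boundary condition and expands explicitly, so both verifications amount to the same computation.
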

\begin{proof}
Let $\psi = (\psi_1,\psi_2,\psi_3,\psi_4)^\top \in \cM$
and for convenience let us denote $\alpha := k+1/2 \in (\mathbb{Z} + 1/2)$.
From \eqref{eq:Dk} we have that
\begin{equation}\label{eq:quadratic.form.1}
  \begin{split}
    &\norm*{\left(\sfT_k -\tfrac{1}{2}\right) \psi}_{L^2(\cI_\omg;\dC^4)}^2
    \!=\! \norm*{ \left(\tau_{k} -\tfrac{1}{2}\right)
            \begin{pmatrix}
        \psi_1 \\ \psi_2
      \end{pmatrix}
    }_{L^2(\cI_\omg;\dC^2)}^2\! +\!
    \norm*{ \left(\tau_{k} -\tfrac{1}{2}\right)
      \begin{pmatrix}
        \psi_3 \\ \psi_4
      \end{pmatrix}
    }_{L^2(\cI_\omg;\dC^2)}^2.
  \end{split}
\end{equation}
Performing a long but elementary computation and thanks to integration by parts,
 we have that for any $f,g \in
C_0^{\infty}((0,\omega])$
\begin{equation}\label{eq:quadratic.form.2}
  \begin{split}
&\left\|\left(\tau_{k} -\tfrac{1}{2}\right)
            \begin{pmatrix}
        f \\ g
      \end{pmatrix}
    \right\|_{L^2(\cI_\omg;\dC^2)}^2\\
   &\qquad  =
  \int_0^\omg \left(|\aa f(\tt)-g'(\tt)-\aa\cot\tt g(\tt)|^2 + |f'(\tt)-\aa\cot\tt f(\tt) -\aa g(\tt)|^2\right)\dd\tt\\
  &\qquad 
	=
	\int_0^\omg 
	\left(
	|f'(\tt)|^2+|g'(\tt)|^2+
	\aa^2(1+\cot^2\tt)
	\big(|f(\tt)|^2+|g(\tt)|^2\big)\right)\dd\tt\\
	&\qquad\qquad -2\aa\Re\int_0^\omg
	\left(f(\tt)\ov{g'(\tt)}+ f'(\tt)\ov{g(\tt)}-\cot\tt\big(
	g'(\tt)\ov{g(\tt)}-f'(\tt)\ov{f(\tt)}\big)\right)\dd\tt
\\
  &\qquad=
     \int_0^\omega \left[
      |f'(\theta)|^2 + |g'(\theta)|^2
    \right] \,\dd\theta\\
    &\qquad\qquad+
    \int_0^\omega \left[
       \frac{\alpha(\alpha-1)}{\sin^2\theta} |f(\theta)|^2
      + \frac{\alpha(\alpha+1)}{\sin^2\theta}
       |g(\theta)|^2 \right] \,\dd\theta 
      - \alpha B(f,g)(\omega),
   \end{split}
 \end{equation}
 with
 \begin{equation*}
      B(f,g)(\omega) := \Re \left[
      2f(\omega) \overline{g(\omega)} + \cot \omega
      |f(\omega)|^2
      - \cot \omega |g(\omega)|^2
    \right].
\end{equation*}
We underline that, performing the integration by parts, the boundary term $B(f,g)(\omega)$
only has a contribution in $\omega$, since the functions $f$ and
$g$ are
supported outside the origin.
From the boundary conditions in \eqref{eq:matrix_A} and
\eqref{eq:cD}, we get that
\begin{equation}\label{eq:quadratic.form.3}
\begin{aligned}
  &B(\psi_1,\psi_2)(\omega) + B(\psi_3,\psi_4)(\omega) \\
  &\quad= 2\Re\left[
  \psi_1(\omg)\ov{\psi_2(\omg)}
  +\psi_3(\omg)\ov{\psi_4(\omg)}\right]
  \\
  &\quad\quad+\cot\omg\left[|\psi_1(\omg)|^2+|\psi_3(\omg)|^2-|\psi_2(\omg)|^2-|\psi_4(\omg)|^2\right]\\
  &\quad = 2\Re\left[-\left(\sin\omg\psi_3(\omg)-\cos\omg\psi_4(\omg)\right)\left(\cos\omg\ov{\psi_3(\omg)}+\sin\omg\ov{\psi_4(\omg)}\right)+\psi_3(\omg)\ov{\psi_4(\omg)}
  \right] \\
  &\quad\quad \!+\!\cot\omg\left[|\sin\omg\psi_3(\omg)-\cos\omg\psi_4(\omg)|^2\!+\!|\psi_3(\omg)|^2\!-\!|\cos\omg\psi_3(\omg)+\sin\omg\psi_4(\omg)|^2\!-\!|\psi_4(\omg)|^2\right]\\
  &\quad = 
  2\sin\omg\cos\omg\left[|\psi_4(\omg)|^2-
  |\psi_3(\omg)|^2\right]+4\cos^2\omg\Re[\psi_3(\omg)\ov{\psi_4(\omg)}]\\
  &\quad\quad+\cot\omg\left[2\sin^2\omg\big(
  |\psi_3(\omg)|^2-|\psi_4(\omg)|^2\big)
  -4\sin\omg\cos\omg\Re[\psi_3(\omg)\ov{\psi_4(\omg)}]\right] = 0,
\end{aligned}
\end{equation}
so from \eqref{eq:quadratic.form.1}, \eqref{eq:quadratic.form.2} and \eqref{eq:quadratic.form.3} we
conclude that
\begin{equation}\label{eq:quadratic.form.final}
  \begin{split}
  \norm{(\sfT_k -\tfrac12) \psi}_{L^2(\cI_\omg;\dC^4)}^2 
 = & \int_0^\omega \left[
      |\psi_1'(\theta)|^2 + |\psi_2'(\theta)|^2
      +
      |\psi_3'(\theta)|^2 + |\psi_4'(\theta)|^2
    \right] \,\dd\theta
    \\
    &  +
    \int_0^\omega \left[
       \frac{\alpha(\alpha-1)}{\sin^2\theta}\left(|\psi_1(\theta)|^2 +
         |\psi_3(\theta)|^2\right)
     \right] \,\dd\theta \\
     &  +
    \int_0^\omega \left[
       \frac{\alpha(\alpha+1)}{\sin^2\theta}
       \left(|\psi_2(\theta)|^2 + |\psi_4(\theta)|^2\right)
    \right] \,\dd\theta.
  \end{split}
\end{equation}
We have that $\min[\alpha(\alpha-1),\alpha(\alpha+1)]
\ge -1/4$: thanks to Lemma \ref{lem:hardy.for.us}, from \eqref{eq:quadratic.form.final}
 we conclude that for $k\in\ZZ$ and $0<\omega < \pi/2$
\begin{equation}\label{eq:estimate.below}
\norm*{\left(\sfT_k -\tfrac{1}{2}\right)\psi }^2_{L^2(\cI_\omg;\dC^4)} \geq
\frac{\pi^2}{16\omega^2}\norm{\psi}_{L^2(\cI_\omg;\dC^4)}^2, \quad \text{ for all }
\psi \in \cM.
\end{equation}
From \eqref{eq:estimate.below} it is immediate to conclude \eqref{eq:estimate.below.final}.
\end{proof}

Our next aim is to construct an operator
in a weighted $L^2$-space which is unitarily equivalent to $\sfT_k$. This construction is performed in order to fit better to the application to the MIT bag model on the cone. Consider the unitary transform
\[
	\sfU \colon L^2(\cI_\omg;\sin\tt\dd\tt;\dC^4)\arr L^2(\cI_\omg;\dC^4),\qquad
	(\sfU\psi)(\tt) := (\sin\tt)^{1/2}\psi(\tt).
\]
We define the differential expressions
\[
	\wt{\tau}_k 	:= 
	\begin{pmatrix} 
	k+1 &-\frac{\dd}{\dd\tt} 	-(k+1)\cot\tt\\
	\frac{\dd}{\dd\tt} - k\cot\tt & -k
	\end{pmatrix},
	\qquad
		\wt\cT_k :=\begin{pmatrix} \wt{\tau}_k & 0\\
	0 &\wt{\tau}_k\end{pmatrix},\qquad k\in\dZ.		 
\]
The self-adjoint operator 
\begin{equation}\label{eq:wtT_k}
	\wt{\sfT}_k := \sfU^{-1}\sfT_k\sfU
\end{equation}
in the Hilbert space $L^2(\cI_\omg;\sin\tt\dd\tt;\dC^4)$
can be alternatively characterised as
\begin{equation}\label{eq:wtTk}
	\begin{aligned}
		\wt{\sfT}_k \psi &=\wt{\cT}_k\psi,\\
		\dom\wt\sfT_k &
		\!= \!
		\left\{
		\psi\!\in\! {\rm AC}_{\rm loc}(\cI_\omg;\dC^4)\colon\!
		\psi,
		\wt\cT_k \psi\in L^2(\cI_\omg;\sin\tt\dd\tt;\dC^4),
		\left(\begin{smallmatrix}
			\psi_1(\omg)
			\\
			\psi_2(\omg)
		\end{smallmatrix}\right)\!=\! 
		\sfA_\omg	\left(\begin{smallmatrix}\psi_3(\omg)
			\\
			\psi_4(\omg)
		\end{smallmatrix}\right)
		\right\}\!.
	\end{aligned}
\end{equation}
Our next aim is to find explicitly the eigenvalues and the eigenfunctions of $\wt\sfT_k$. 
In the formulation and the proof of this result we use the so-called Ferrers functions
$\sfP^\mu_\nu(x)$ and $\sfQ^\mu_\nu(x)$ with $x\in(-1,1)$; see \cite[\S 5.15]{O97},~\cite[Chap. 14]{DLMF},
and \cite[\S 8.7 and 8.8]{GR07}. 
\begin{remark}
	The functions $\sfP^\mu_\nu(x)$ and $\sfQ^\mu_\nu(x)$, $x\in(-1,1)$, can be expressed through the associated Legendre functions $P^\mu_\nu(z)$ and $Q^\mu_\nu(z)$ via the identities~\cite[14.23.4 and 14.23.5]{DLMF}.
\end{remark}
\begin{prop}\label{prop:model_Dirac}
	Let $\omg \in (0,\pi)\sm\{\frac{\pi}{2}\}$ and let the operator $\wt\sfT_k$, $k\in\dZ$, 
	be as in~\eqref{eq:wtT_k}. Then the following hold.
	\begin{myenum}
	\item If $k \ge 0$, then $\lm\in\dR$ is
	an eigenvalue of $\wt\sfT_k$ if and only if it is the root of at least one of the following two transcendental equations
	\begin{subequations}\label{eq:evs_k_pos}
	\begin{align}
				(\lm+k+1)\sfP_{\lm}^{-k-1}(\cos\omg) &= \sfP_{\lm-1}^{-k}(\cos\omg),\label{eq:evs_k_pos1}\\
				(\lm+k+1)\sfP_{\lm}^{-k-1}(\cos\omg) &=- \sfP_{\lm-1}^{-k}(\cos\omg).\label{eq:evs_k_pos2}
	\end{align}
	\end{subequations}
	If $\lm$ is a root of~\eqref{eq:evs_k_pos1} (respectively, of~\eqref{eq:evs_k_pos2}) then the associated eigenfunction is given by
	$\psi = \ii\varphi\oplus\varphi$
	(respectively, $\psi = 	-\ii\varphi\oplus\varphi$)
	where 
	\begin{equation}\label{eq:varphi}
		\varphi(\tt) := \begin{pmatrix}\sfP^{-k}_{\lm-1}(\cos\tt)\\
		(k-\lm+1)\sfP^{-k-1}_{\lm-1}(\cos\tt)
		\end{pmatrix}\in L^2(\cI_\omg;\sin\tt\dd\tt;\dC^2).
	\end{equation}
	Moreover, the spectrum of $\wt{\sfT}_k$ is simple.
	\item If $k \le -1$ then $\lm\in\dR$ is
	an eigenvalue of $\wt\sfT_k$ if and only if it is the root of at least one of the following two transcendental equations
	\begin{subequations}\label{eq:evs_k_neg}
	\begin{align}
				(\lm+k)\sfP_{\lm-1}^{k}(\cos\omg) &= -\sfP_{\lm}^{k+1}(\cos\omg),
\label{eq:evs_k_neg1}\\
				(\lm+k)\sfP_{\lm-1}^{k}(\cos\omg) &= \sfP_{\lm}^{k+1}(\cos\omg).
	\label{eq:evs_k_neg2}
	\end{align}
	\end{subequations}
	If $\lm$ is a root of~\eqref{eq:evs_k_neg1} (respectively, of~\eqref{eq:evs_k_neg2}) then the associated eigenfunction is given by
	$\psi = \ii\varphi\oplus\varphi$
		(respectively, $\psi = 	-\ii\varphi\oplus\varphi$)
		where 
		\[
		\varphi(\tt) := \begin{pmatrix}
		(\lm+k)\sfP^{k}_{\lm-1}(\cos\tt)\\
		\sfP^{k+1}_{\lm-1}(\cos\tt)\\
		\end{pmatrix}\in L^2(\cI_\omg;\sin\tt\dd\tt;\dC^2).
		\]
	Moreover, the spectrum of $\wt{\sfT}_k$ is simple.
	\end{myenum}
\end{prop}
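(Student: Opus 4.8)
The plan is to reduce to the one‑dimensional fibre $\wt\tau_k$, identify its square‑integrable eigensolution in terms of Ferrers functions, and then read off the eigenvalue equation from the coupling condition at $\omg$. Since $\wt\sfT_k=\sfU^{-1}\sfT_k\sfU$ and $\sfT_k$ is self‑adjoint with purely discrete spectrum by Proposition~\ref{prop:spectrum.Tk}, the same holds for $\wt\sfT_k$, so it suffices to describe its eigenpairs, and the $4\times4$ expression $\wt\cT_k$ is block‑diagonal with both diagonal blocks equal to $\wt\tau_k$. First I would record that $\check\tau_k$ from the proof of Proposition~\ref{prop:spectrum.Tk} is of the form $\tau_\alpha$ with $\alpha=k+\tfrac12$, $|\alpha|\ge\tfrac12$, hence limit‑point at $\tt=0$ by Proposition~\ref{prop:1D_Dirac}\,(i); since $\tau_k=-\check\tau_k+\sfB_k$ with $\sfB_k$ a bounded matrix multiplication and $\wt\tau_k$ is the conjugate of $\tau_k$ by multiplication with $(\sin\tt)^{1/2}$, the expression $\wt\tau_k$ is likewise limit‑point at $\tt=0$ in $L^2(\cI_\omg;\sin\tt\dd\tt;\dC^2)$. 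Consequently, for every $\lm\in\dR$ the space of solutions of $\wt\tau_k\varphi=\lm\varphi$ that are square‑integrable near the origin is at most one‑dimensional. Taking $\varphi$ as in~\eqref{eq:varphi} (resp. its $k\le-1$ analogue), which Step~2 below shows solves $\wt\tau_k\varphi=\lm\varphi$ and lies in $L^2(\cI_\omg;\sin\tt\dd\tt;\dC^2)$, any eigenfunction $\psi=(\psi_1,\psi_2,\psi_3,\psi_4)^\top$ of $\wt\sfT_k$ at $\lm$ must then satisfy $(\psi_1,\psi_2)^\top=a\varphi$, $(\psi_3,\psi_4)^\top=b\varphi$ with $(a,b)\in\dC^2\setminus\{0\}$, and conversely such a $\psi$ is an eigenfunction precisely when it belongs to $\dom\wt\sfT_k$, i.e. when the boundary condition in~\eqref{eq:wtTk} holds, which reduces to the single vector identity $a\,\varphi(\omg)=b\,\sfA_\omg\varphi(\omg)$.

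Next I would identify $\varphi$ explicitly. For $k\ge0$ the two scalar equations contained in $\wt\tau_k\varphi=\lm\varphi$, namely
\[
  \varphi_2'+(k+1)\cot\tt\,\varphi_2=(k+1-\lm)\varphi_1,\qquad \varphi_1'-k\cot\tt\,\varphi_1=(\lm+k)\varphi_2,
\]
become, after the substitution $x=\cos\tt$, the order‑raising and order‑lowering recurrences for the Ferrers functions $\sfP^{-k}_{\lm-1}$ and $\sfP^{-k-1}_{\lm-1}$, i.e. $(1-x^2)\tfrac{\dd}{\dd x}\sfP^\mu_\nu+\mu x\sfP^\mu_\nu=-(1-x^2)^{1/2}\sfP^{\mu+1}_\nu$ and $(1-x^2)\tfrac{\dd}{\dd x}\sfP^\mu_\nu-\mu x\sfP^\mu_\nu=(\nu-\mu+1)(\nu+\mu)(1-x^2)^{1/2}\sfP^{\mu-1}_\nu$; see~\cite[\S 14.10]{DLMF}. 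Hence $\varphi$ from~\eqref{eq:varphi} solves the system for every $\lm$, and since $\sfP^{-k}_{\lm-1}(\cos\tt)=O(\tt^{k})$ and $\sfP^{-k-1}_{\lm-1}(\cos\tt)=O(\tt^{k+1})$ as $\tt\to0^+$, the function $\varphi$ is bounded on $(0,\omg]$ and hence lies in $L^2(\cI_\omg;\sin\tt\dd\tt;\dC^2)$; being a nonzero such solution it spans the (at most one‑dimensional) solution space from the first step. For $k\le-1$ the same recurrences apply with $\mu$ replaced by $k$, which yields the $\varphi$ built from $\sfP^{k}_{\lm-1}$ and $\sfP^{k+1}_{\lm-1}$ stated in part (ii).

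Finally I would analyse $a\,\varphi(\omg)=b\,\sfA_\omg\varphi(\omg)$. As $\varphi$ solves a first‑order linear system that is regular at $\omg$, we have $\varphi(\omg)\ne0$, so $b\ne0$ and $\sfA_\omg\varphi(\omg)=(a/b)\varphi(\omg)$: thus $\lm$ is an eigenvalue exactly when $\varphi(\omg)$ is an eigenvector of $\sfA_\omg$, with eigenvalue $a/b$. A direct computation gives $\sfA_\omg^2=-I_2$, so $\sfA_\omg$ has the two simple eigenvalues $\pm\ii$, and $(\sfA_\omg\mp\ii I_2)\varphi(\omg)=0$ read in the first component says $(\sin\omg\mp1)\varphi_1(\omg)=\cos\omg\,\varphi_2(\omg)$. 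Substituting $\varphi(\omg)$ from~\eqref{eq:varphi} and clearing the $\cos\omg,\sin\omg$ factors by means of the contiguous relation $(1-x^2)^{1/2}\sfP^{\mu+1}_\nu(x)+(\nu+\mu+1)x\sfP^\mu_\nu(x)=(\nu-\mu+1)\sfP^\mu_{\nu+1}(x)$ (applied with $\mu=-k-1$, $\nu=\lm-1$, $x=\cos\omg$; an analogous relation disposes of the case $k\le-1$) collapses the condition to exactly~\eqref{eq:evs_k_pos1} in the $+\ii$ case and~\eqref{eq:evs_k_pos2} in the $-\ii$ case; accordingly $a/b=\ii$ forces $\psi=\ii\varphi\oplus\varphi$ and $a/b=-\ii$ forces $\psi=-\ii\varphi\oplus\varphi$. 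Simplicity follows because $\lm$ cannot solve both~\eqref{eq:evs_k_pos1} and~\eqref{eq:evs_k_pos2}: this would force $\sfP^{-k}_{\lm-1}(\cos\omg)=0$, so $\varphi(\omg)$ would be proportional to $(0,1)^\top$, which is not an eigenvector of $\sfA_\omg$ since $\omg\ne\tfrac\pi2$ — contradicting that a root of~\eqref{eq:evs_k_pos1} makes $\varphi(\omg)$ an $\ii$‑eigenvector of $\sfA_\omg$; hence every eigenspace is one‑dimensional. The case $k\le-1$ is entirely parallel.

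The only genuine difficulty I anticipate is bookkeeping: selecting the correct Ferrers‑function recurrences and tracking their signs (which depend on the normalisation convention for $\sfP^\mu_\nu$) so that the first‑order system and the endpoint identity collapse precisely to~\eqref{eq:evs_k_pos} and~\eqref{eq:evs_k_neg}, together with a careful verification of the small‑$\tt$ asymptotics ensuring that it is $\varphi$, and not its companion solution, that is square‑integrable. The structural ingredients — the limit‑point reduction, the identity $\sfA_\omg^2=-I_2$, and the simplicity argument — are routine.
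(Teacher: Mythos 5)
Your proof is correct, and its endgame --- analysing the boundary condition through the $\pm\ii$ eigenvectors of $\sfA_\omg$, collapsing the endpoint identity to \eqref{eq:evs_k_pos} and \eqref{eq:evs_k_neg} via a contiguous relation, and deducing simplicity from $\varphi(\omg)\neq 0$ --- coincides with the paper's; but the central ODE step is handled by a genuinely different route. The paper turns each component into a second-order Legendre equation, writes the general solution as a combination of $\sfP$ and $\sfQ$, discards the $\sfQ$-branch through its asymptotics at $\tt=0$ (with a separate, more delicate argument for $k=-1$, where $\sfQ^0_{\lm-1}(\cos\tt)$ itself is square integrable and only the requirement $\wt\cT_{-1}\psi\in L^2$ excludes it), and then matches the two constants via recurrences. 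You instead note that the first-order system is in the limit-point case at $\tt=0$ --- inherited from Proposition~\ref{prop:1D_Dirac}\,(i) through the bounded perturbation $\sfB_k$ and the $(\sin\tt)^{1/2}$ conjugation --- so the space of solutions square integrable near the origin is at most one-dimensional, and you verify directly that the explicit Ferrers vector $\varphi$ is such a (nontrivial) solution. This buys a uniform treatment of all $k\in\dZ$ (no special case $k=-1$) and avoids the second-order equation altogether; the price is that the verification that $\varphi$ solves the system must be done by hand (your two derivative recurrences are correct in the DLMF Ferrers normalisation and do reproduce the system), and that you use, without proof, the standard but unstated fact that the limit-point/limit-circle classification (equivalently, the deficiency indices) is stable under bounded symmetric perturbations of the potential --- add a line for that. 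Your simplicity argument also differs in form from the paper's ($0\oplus\varphi$ plus unique solvability of the first-order system), but both rest on $\varphi(\omg)\neq0$, and yours additionally shows directly that no $\lm$ can solve both transcendental equations simultaneously.
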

\begin{proof}
   Let $\lm\in\dR$ and consider the ordinary differential equation 
	$\wt\cT_k\psi = \lm\psi$ with $\psi\in\dom\wt\sfT_k$. From this equation we find the system
	\begin{equation}\label{eq:system}
	\begin{cases}
		(k+1)\psi_1(\tt) - \psi_2'(\tt) - (k+1)\cot\tt\psi_2(\tt) = \lm\psi_1(\tt),\\
		\psi_1'(\tt) - k\cot\tt\psi_1(\tt) - k\psi_2(\tt) = \lm\psi_2(\tt).
	\end{cases}
	\end{equation}
	From the first equation in the above system we find that 
	\[
		(\lm-k-1)\psi_1(\tt) = -\psi_2'(\tt) - (k+1)\cot\tt\psi_2(\tt),
	\]
	and differentiating the above equation with respect to $\tt$ we get
	\[
		(\lm-k-1)\psi_1'(\tt) = -\psi_2''(\tt) - (k+1)\cot\tt\psi_2'(\tt) + \frac{k+1}{\sin^2\tt}\psi_2(\tt).
	\]
	Multiplying the second equation in~\eqref{eq:system} by $(\lm-k-1)$
	and substituting there the above formulae for $(\lm-k-1)\psi_1(\tt)$ and $(\lm-k-1)\psi'_1(\tt)$ we get after a simplification
	%
	\begin{equation}\label{eq:diff_eq_psi2}
		\psi_2''(\tt) + \cot\tt\psi_2'(\tt) +\left(\lm(\lm-1)-\frac{(k+1)^2}{\sin^2\tt}\right)\psi_2(\tt) = 0.
	\end{equation}
Making the change of variables $\tt = \arccos x$ in the last equation we get the equation of the form~\cite[Eq. (12.02)]{O97} 
with $\nu = |\lm-\tfrac12| -\tfrac12$ and $\mu = |k+1|$ settled therein. 
Hence, by~\cite[\S 5.15]{O97} and \cite[Eq.~(14.2.6)]{DLMF} the general solution of the differential equation~\eqref{eq:diff_eq_psi2} is given by
	\begin{equation}\label{key}
		\psi_2(\tt) = c_2\sfP_{\nu}^{-\mu}(\cos\tt) + c_2'\sfQ_\nu^{\mu}(\cos\tt),
	\end{equation}
	where $c_2,c_2'\in\dC$ are arbitrary constants.
	Thanks to \cite[Eq.~(14.9.5)]{DLMF}, $\sfP_{\nu}^{-\mu}(x) = \sfP_{\lambda -1}^{-\mu}(x) = \sfP_{- \lambda}^{-\mu}(x)$ for all 
	$x \in (-1,1)$, so in the following we will just write $\sfP_{\lambda -1}^{-\mu}(\cos\tt)$ in our formulae.
The assumption $\psi_2\in L^2(\cI_\omg;\sin\tt\dd\tt)$
	and the asymptotics~\cite[Eqs. (15.06) and (15.07)]{O97} of
	$\sfP_\nu^{-\mu}(x)$, $\sfQ_\nu^\mu(x)$ as $x\arr 1^-$ yield that $c_{2}' = 0$ provided that $k \ne -1$. In the case $k = -1$ 
	we compute using~\cite[Eq. (14.10.4)]{DLMF} the derivative of $\psi_2$ with respect to $\tt$
	\begin{equation}\label{eq:psi2_der}
	\begin{aligned}
		\psi_2'(\tt) &= 
		\frac{c_2\lm}{\sin\tt}\left(
		\sfP_{\lm}^0(\cos\tt) - \cos\tt \sfP_{\lm-1}^0(\cos\tt)\right)\\
		&\qquad+ 
		\frac{c_2'(1+\nu)}{\sin\tt}\left(
		\sfQ_{\nu+1}^0(\cos\tt) - \cos\tt \sfQ_\nu^0(\cos\tt)\right).
	\end{aligned}
	\end{equation}
	The condition $\wt{\cT}_{-1}\psi\in L^2(\cI_\omg;\sin\tt\dd\tt;\dC^4)$ yields $\psi_2'\in L^2(\cI_\omg;\sin\tt\dd\tt)$.
	Combining~\eqref{eq:psi2_der} with the series~\cite[9.100]{GR07}, the representation of the Ferrers function of the first kind in~\cite[8.704]{GR07} and with the asymptotics~\cite[Eq. (14.8.3)]{DLMF}
	we conclude that $c_{2}' = 0$ also for $k = -1$.
	Hence, we end up with 
	\[
		\psi_2(\tt) = c_{2}\sfP^{-|k+1|}_{\lm-1}(\cos\tt).
	\]
	From the second equation in ~\eqref{eq:system} we find that 
	\[
	\begin{cases}
	(\lm+k)\psi_2(\tt) = \psi_1'(\tt) - k\cot\tt\psi_1(\tt),\\
	(\lm+k)\psi_2'(\tt) = \psi_1''(\tt) - k\cot\tt\psi_1'(\tt) + \frac{k}{\sin^2\tt}\psi_1(\tt).
	\end{cases}	
	\]
	Multiplying the first equation in~\eqref{eq:system} by $(\lm+k)$
	and substituting there the above formulae for $(\lm+k)\psi_2(\tt)$ and $(\lm+k)\psi_2'(\tt)$ we get
	\[
		\psi''_1(\tt) + \cot\tt\psi_1'(\tt) +
		\left(\lm(\lm-1) -\frac{k^2}{\sin^2\tt}\right)\psi_1(\tt) = 0.
	\]
	Analogously we find that
	\[
		\psi_1(\tt) = c_{1}\sfP_{\lm-1}^{-|k|}(\cos\tt)
	\]
	with some constant $c_{1}\in\dC$.
	Our next aim is to find the relation that connects the constants $c_{1}$ and $c_{2}$.
	To this aim we consider two cases $k\ge 0$ and $k\le - 1$. First, assume that $k \ge 0$. In this case we have
	\[
		\psi_1(\tt) = c_{1}\sfP_{\lm-1}^{-k}(\cos\tt)\and \psi_2(\tt) = c_{2}\sfP_{\lm-1}^{-k-1}(\cos\tt).
	\]
	From the first identity in~\eqref{eq:system}
	and using the formula for the derivative of the Ferrers functions of the first kind~\cite[Eq. (14.10.4)]{DLMF} we find
	\[
	\begin{aligned}
	&(\lm-k-1)c_{1}\sfP_{\lm-1}^{-k}(\cos\tt) \\
	&\qquad =c_{2}\left[-\frac{\lm+k+1}{\sin\tt}
		\sfP_{\lm}^{-k-1}(\cos\tt) + (\lm - k-1)\cot\tt\sfP_{\lm-1}^{-k-1}(\cos\tt)\right].
	\end{aligned}
	\]
	Making use of the recurrence relation~\cite[Eq. (14.10.2)]{DLMF} in the above formula we get
	\[
		 c_{2} =	(k-\lm+1)c_{1}.
	\]
	Now we pass to the case $k\le -1$.
	In this case we have
	\[
	\psi_1(\tt) = c_{1}\sfP_{\lm-1}^{k}(\cos\tt)\and \psi_2(\tt) = c_{2}\sfP_{\lm-1}^{k+1}(\cos\tt).
	\]
	From the second identity in~\eqref{eq:system}
	and using again the formula for the derivative of the Ferrers functions of the first kind~\cite[Eq. (14.10.4)]{DLMF} we find
	\[
		(\lm+k)c_{2}\sfP_{\lm-1}^{k+1}(\cos\tt) = c_{1}
		\left[
		\frac{\lm-k}{\sin\tt}\sfP_{\lm}^k(\cos\tt) - (\lm+k)\cot\tt\sfP_{\lm-1}^k(\cos\tt)
		\right].
	\]
	Making again use of the recurrence relation~\cite[Eq. (14.10.2)]{DLMF} in the above formula we get
	\[
		c_{1} = (\lm+k)c_{2}
	\]
	Hence, we find that
	\begin{equation}\label{eq:psi12}
	\begin{aligned}
	\begin{pmatrix}
		\psi_1(\tt)\\
		\psi_2(\tt)
	\end{pmatrix} 
	&= 
	c_{12}\begin{pmatrix}
		\sfP^{-k}_{\lm-1}(\cos\tt)\\
		(k-\lm+1)\sfP_{\lm-1}^{-k-1}(\cos\tt)
	\end{pmatrix},&\text{for}&\,\, k \ge0.\\
	\begin{pmatrix}
		\psi_1(\tt)\\
		\psi_2(\tt)
	\end{pmatrix} &= 
		c_{12}\begin{pmatrix}
		(\lm+k)\sfP^{k}_{\lm-1}(\cos\tt)\\
		\sfP_{\lm-1}^{k+1}(\cos\tt)
		\end{pmatrix},&\text{for}&\,\, k \le -1.
\end{aligned}
	\end{equation}
	with some constant $c_{12}\in\dC$.
	Performing similar analysis for the last two components of the vector-valued function $\psi$ we find that
	\begin{equation}\label{eq:psi34}
	\begin{aligned}
	\begin{pmatrix}
	\psi_3(\tt)\\
	\psi_4(\tt)
	\end{pmatrix} 
	&= 
	c_{34}\begin{pmatrix}
	\sfP^{-k}_{\lm-1}(\cos\tt)\\
	(k-\lm+1)\sfP_{\lm-1}^{-k-1}(\cos\tt)
	\end{pmatrix},&\text{for}&\,\, k \ge0.\\
	\begin{pmatrix}
	\psi_3(\tt)\\
	\psi_4(\tt)
	\end{pmatrix} &= 
	c_{34}\begin{pmatrix}
	(\lm+k)\sfP^{k}_{\lm-1}(\cos\tt)\\
	\sfP_{\lm-1}^{k+1}(\cos\tt)
	\end{pmatrix},&\text{for}&\,\, k \le -1.
	\end{aligned}
	\end{equation}
	with some constant $c_{34}\in\dC$.
	
	Now we recall the boundary conditions satisfied by $\psi$
	\begin{equation}\label{eq:bc_recalled}
	\begin{pmatrix}
		\psi_1(\omg)
		\\
		\psi_2(\omg)
	\end{pmatrix}= 
	\sfA_\omg	\begin{pmatrix}\psi_3(\omg)
		\\
		\psi_4(\omg)
	\end{pmatrix},
	\end{equation}
	where the matrix $\sfA_\omg$ is given by~\eqref{eq:matrix_A}. It is straightforward to verify that 
	$\sfA_\omg$ has two eigenvalues $\lm = \pm \ii$ and that the respective eigenvectors are given by
	\[
		\xi_\ii = \begin{pmatrix} 1 \\
		\frac{\sin\omg-1}{\cos\omg}\end{pmatrix}
		\and \xi_{-\ii} = \begin{pmatrix}1\\ \frac{\sin\omg+1}{\cos\omg}\end{pmatrix}.
	\]
	Using these properties of the matrix $\sfA_\omg$, the boundary condition~\eqref{eq:bc_recalled} and the
	representations~\eqref{eq:psi12},~\eqref{eq:psi34} for $k\ge0$ we conclude that
	if $k\ge0$ then $\lm\in\dR$ is an eigenvalue of $\wt{\sfT}_k$ if and only if it is the root of at least of one of the following equations 
	\[
	\begin{aligned}
		\begin{cases}
		(k-\lm+1)\sfP^{-k-1}_{\lm-1}(\cos\omg) & = \frac{\sin\omg-1}{\cos\omg}\sfP_{\lm-1}^{-k}(\cos\omg),\\
		(k-\lm+1)\sfP^{-k-1}_{\lm-1}(\cos\omg) & = \frac{\sin\omg+1}{\cos\omg}\sfP_{\lm-1}^{-k}(\cos\omg).
		\end{cases}
	\end{aligned}
	\]
	Applying~\cite[8.735 (1)]{GR07} we simplify the above equations as
	\[
	\begin{aligned}
			\begin{cases}
			(\lm+k+1)\sfP_{\lm}^{-k-1}(\cos\omg) = \sfP_{\lm-1}^{-k}(\cos\omg),
			\\
			(\lm+k+1)\sfP_{\lm}^{-k-1}(\cos\omg) =- \sfP_{\lm-1}^{-k}(\cos\omg).
			\end{cases}
	\end{aligned}
	\]
	It follows from the first equations in~\eqref{eq:psi12} and~\eqref{eq:psi34} that the respective eigenfunctions are given as in item~(i).	
		
	Again using these properties of the matrix $\sfA_\omg$, the boundary condition~\eqref{eq:bc_recalled} and the
	representations~\eqref{eq:psi12},~\eqref{eq:psi34} for $k\le -1$ we conclude that
	if $k\le-1$ then $\lm\in\dR$ is an eigenvalue of $\wt{\sfT}_k$ if it is the root of at least one of the following equations 
	\[
		\begin{cases}
		(\lm+k)\sfP^{k}_{\lm-1}(\cos\omg) = \frac{\cos\omg}{\sin\omg-1}\sfP_{\lm-1}^{k+1}(\cos\omg),\\
		(\lm+k)\sfP^{k}_{\lm-1}(\cos\omg) = \frac{\cos\omg}{\sin\omg+1}\sfP_{\lm-1}^{k+1}(\cos\omg).
		\end{cases}
	\]
	Applying~\cite[8.735 (4)]{GR07} we simplify the above equations as
		\[
		\begin{aligned}
				\begin{cases}
				(\lm+k)\sfP_{\lm-1}^{k}(\cos\omg) = -\sfP_{\lm}^{k+1}(\cos\omg),
				\\
				(\lm+k)\sfP_{\lm-1}^{k}(\cos\omg) = \sfP_{\lm}^{k+1}(\cos\omg).
				\end{cases}
		\end{aligned}
		\]
	It follows from the second equations in~\eqref{eq:psi12} and~\eqref{eq:psi34} that the respective eigenfunctions are given as in item~(ii).
	
	It remains to show that the spectrum of $\wt{\sfT}_k$ is simple.
	We will consider the case $k \ge 0$ only, because the case  $k\le -1$ can be analysed analogously.	
	Since the eigenfunctions are characterized via the first equations in~\eqref{eq:psi12},~\eqref{eq:psi34} by two constants $c_{12}$ and $c_{34}$ leaving two degrees of freedom the multiplicity is bounded from above by $2$. Suppose that $\lm$ is an eigenvalue of $\wt{\sfT}_k$ of multiplicity $2$. Then $\lm$ is the root of both equations in~\eqref{eq:evs_k_pos}
	and the respective linear independent
	eigenfunctions are given by $(\pm\ii\varphi)\oplus \varphi$ with $\varphi$ as in~\eqref{eq:varphi}.
	Hence, we conclude that $0\oplus\varphi$ is also an eigenfunction corresponding to $\lm$ and from the boundary conditions in~\eqref{eq:wtTk} we get that $\varphi(\omg) = (0,0)^\top$. In view of $\tau_k((\sin\tt)^{1/2}\varphi(\tt)) = \lm(\sin\tt)^{1/2}\varphi(\tt)$, $\tt\in(0,\omg)$ we get by the unique solvability result \cite[Satz. 15.4 (b)]{W03} that $\varphi \equiv 0$,
	which leads to a contradiction.  
\end{proof}

\begin{cor}\label{cor:model_Dirac}
	Let $\omg\in (0,\pi)\sm \{\frac{\pi}{2}\}$.
	The spectrum of $\wt{\sfT}_k$, $k\in\dZ$, is symmetric with respect to the origin. In particular,
	\begin{myenum}
	\item For $k \ge0$, $\lm\in\dR$ is a root of~\eqref{eq:evs_k_pos1} if and only if $-\lm$ is a root of~\eqref{eq:evs_k_pos2}.
	\item For $k \le-1$, $\lm\in\dR$ is a root of~\eqref{eq:evs_k_neg1} if and only if $-\lm$ is a root of~\eqref{eq:evs_k_neg2}.
	\end{myenum}
	In particular, $0$ is not an eigenvalue of $\wt{\sfT}_k$.
\end{cor}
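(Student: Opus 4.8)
The plan is to read off the stated symmetry directly from the transcendental equations characterising the eigenvalues in Proposition~\ref{prop:model_Dirac}, using only two facts that already appear in its proof: the degree--reflection identity $\sfP^{-\mu}_\nu(x)=\sfP^{-\mu}_{-\nu-1}(x)$ for $x\in(-1,1)$ (see \cite[Eq.~(14.9.5)]{DLMF}), and the two contiguous relations \cite[8.735\,(1) and 8.735\,(4)]{GR07} for the Ferrers functions.

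I would first treat $k\ge 0$. Replacing $\lm$ by $-\lm$ in \eqref{eq:evs_k_pos2} and applying the reflection identity (once with $\nu=-\lm$, once with $\nu=-\lm-1$) rewrites that equation as $(\lm-k-1)\sfP^{-k-1}_{\lm-1}(\cos\omg)=\sfP^{-k}_{\lm}(\cos\omg)$. Hence assertion~(i) is equivalent to the claim that, for every $\lm\in\dR$,
\[
  (\lm+k+1)\,\sfP^{-k-1}_{\lm}(\cos\omg)=\sfP^{-k}_{\lm-1}(\cos\omg)
  \quad\Longleftrightarrow\quad
  (\lm-k-1)\,\sfP^{-k-1}_{\lm-1}(\cos\omg)=\sfP^{-k}_{\lm}(\cos\omg).
\]
To prove this I would write the relations \cite[8.735\,(1) and 8.735\,(4)]{GR07} at $x=\cos\omg$: they express $(\lm+k+1)\sfP^{-k-1}_{\lm}(\cos\omg)$ and $\sfP^{-k}_{\lm}(\cos\omg)$ as explicit linear combinations of $\sfP^{-k}_{\lm-1}(\cos\omg)$ and $\sfP^{-k-1}_{\lm-1}(\cos\omg)$, with coefficients built from $\cos\omg$, $\sin\omg$ and $\lm\pm k$. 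Inserting either of the two displayed equations into these relations and simplifying with $\cos^2\omg+\sin^2\omg=1$ produces the other equation, and conversely; along the way one divides by $\cos\omg$ (this is the only place where $\omg\neq\tfrac{\pi}{2}$ is used) and by $1+\sin\omg>0$. The case $k\le -1$ is handled in exactly the same manner, with \eqref{eq:evs_k_neg1}--\eqref{eq:evs_k_neg2} in place of \eqref{eq:evs_k_pos1}--\eqref{eq:evs_k_pos2}.

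Once (i) and (ii) are established, the symmetry of the spectrum of $\wt\sfT_k$ about the origin is immediate, since by Proposition~\ref{prop:model_Dirac} this spectrum is the union of the root sets of the two transcendental equations, and (i)--(ii) say precisely that this union is invariant under $\lm\mapsto-\lm$. For the last assertion, that $0$ is not an eigenvalue: applying (i) (respectively (ii)) with $\lm=0$ shows that $0$ is a root of \eqref{eq:evs_k_pos1} (respectively \eqref{eq:evs_k_neg1}) if and only if it is a root of \eqref{eq:evs_k_pos2} (respectively \eqref{eq:evs_k_neg2}); but in the last paragraph of the proof of Proposition~\ref{prop:model_Dirac} it is shown that any $\lm$ lying in both root sets forces the function $\varphi$ from \eqref{eq:varphi} to vanish at $\omg$, and hence, by unique solvability of the governing ODE, to vanish identically, which is impossible. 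Thus $0$ lies in neither root set, i.e.\ $0\notin\sigma(\wt\sfT_k)$. (Alternatively, one can check $0$ directly from the closed form $\sfP^{-\mu}_0(x)=\Gamma(\mu+1)^{-1}\bigl(\tfrac{1-x}{1+x}\bigr)^{\mu/2}$ together with $\sfP^{-\mu}_{-1}=\sfP^{-\mu}_0$, since $\tfrac{1-\cos\omg}{1+\cos\omg}\neq 1$ for $\omg\neq\tfrac{\pi}{2}$.)

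I expect the only genuine obstacle to be the algebraic verification of the displayed equivalence above: it requires selecting the correct pair of contiguous relations for the Ferrers functions and carrying out a short but somewhat delicate trigonometric simplification based on $\cos^2\omg+\sin^2\omg=1$; everything else is routine bookkeeping on top of Proposition~\ref{prop:model_Dirac}.
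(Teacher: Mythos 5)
Your proposal is correct and takes essentially the same route as the paper: both reduce the equation for $-\lm$ via the reflection identity $\sfP^{\mu}_{\nu}=\sfP^{\mu}_{-\nu-1}$ to the equivalent condition $(\lm-k-1)\sfP^{-k-1}_{\lm-1}(\cos\omg)=\sfP^{-k}_{\lm}(\cos\omg)$ and then verify the equivalence with the original transcendental equation using the contiguous relations of \cite[8.735]{GR07} together with $\cos^2\omg+\sin^2\omg=1$ and the exclusion $\omg\neq\frac{\pi}{2}$ (your organization as proportionality of two linear conditions versus the paper's subtraction yielding an identity of the form $X=\sin\omg\,X$ is only a cosmetic difference). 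The exclusion of $0$ from the spectrum via the double-root/simplicity argument of Proposition~\ref{prop:model_Dirac} is likewise exactly the paper's argument.
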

\begin{proof}
	First of all, we remark that in view of (i) and (ii) the point $\lm = 0$ can not be an eigenvalue of $\wt{\sfT}_k$ as otherwise it would be a double eigenvalue, which contradicts the simplicity of the spectrum shown in Proposition~\ref{prop:model_Dirac}.
	Now we pass to the proofs of the items (i) and (ii).

	\noindent(i) Let $\lm\in\dR$ be an eigenvalue of $\wt{\sfT}_k$, $k\ge0$. Then $\lm$ is a root either of~\eqref{eq:evs_k_pos1} or of~\eqref{eq:evs_k_pos2}. Assume for definiteness that $\lm$ is a root of~\eqref{eq:evs_k_pos1}; \ie
	\begin{equation}\label{eq:initial}
			(\lm+k+1)\sfP_{\lm}^{-k-1}(\cos\omg) = \sfP_{\lm-1}^{-k}(\cos\omg).
	\end{equation}
	 Our aim is to show that $-\lm$ is a root of the complementary equation~\eqref{eq:evs_k_pos2}; \ie
	 \[
		(-\lm+k+1)\sfP_{-\lm}^{-k-1}(\cos\omg) =- \sfP_{-\lm-1}^{-k}(\cos\omg).
	 \]
	 Transforming the above equation using~\cite[8.733 (5)]{GR07} we need to show that
 	 \begin{equation}\label{eq:goal}
	 		(\lm-k-1)\sfP_{\lm-1}^{-k-1}(\cos\omg) = \sfP_{\lm}^{-k}(\cos\omg).
 	 \end{equation}
 	 In view of~\cite[8.735 (2)]{GR07} and using~\eqref{eq:initial} we get
 	 \begin{equation}\label{eq:eq1}
 	 \begin{split}
 	 	(\lm-k-1)\sfP_{\lm-1}^{-k-1}(\cos\omg) 
 	 	&\! =\! (\lm+k+1)\cos\omg\sfP_{\lm}^{-k-1}(\cos\omg) -\sin\omg\sfP_\lm^{-k}(\cos\omg)\\
 	 	&
 	 	\!=\! \cos\omg\sfP_{\lm-1}^{-k}(\cos\omg) -\sin\omg\sfP_\lm^{-k}(\cos\omg).
 	 \end{split}	
 	  \end{equation}
   Applying further the identity \cite[8.735~(4)]{GR07}
   we find
	\begin{equation}\label{eq:eq2}
 	 	\sfP_\lm^{-k}(\cos\omg) = \cos\omg\sfP_{\lm-1}^{-k}(\cos\omg) - (\lm-k-1)\sin\omg\sfP_{\lm-1}^{-k-1}(\cos\omg)
	\end{equation} 	  	 %
 	 Taking the difference of~\eqref{eq:eq1} and~\eqref{eq:eq2} we get
 	 \[
 	 	(\lm-k-1)\sfP_{\lm-1}^{-k-1}(\cos\omg) - \sfP_\lm^{-k}(\cos\omg) =\sin\omg\left[
 	 	(\lm-k-1)\sfP_{\lm-1}^{-k-1}(\cos\omg) -\sfP_\lm^{-k}(\cos\omg)
 	 	\right]. 
 	 \]
 	 Hence, we conclude that~\eqref{eq:goal} holds.

\noindent (ii) 
	Let $\lm\in\dR$ be an eigenvalue of $\wt{\sfT}_k$, $k\le-1$. Then $\lm$ is a root either of~\eqref{eq:evs_k_neg1} or of~\eqref{eq:evs_k_neg2}. Assume for definiteness that $\lm$ is a root of~\eqref{eq:evs_k_neg1}; \ie
\begin{equation}\label{eq:initial2}
	(\lm+k)\sfP_{\lm-1}^k(\cos\omg) 
			= -\sfP_{\lm}^{k+1}(\cos\omg).
\end{equation}
Our aim is to show that $-\lm$ is a root of the complementary equation~\eqref{eq:evs_k_neg2}; \ie
\[
	(-\lm+k)\sfP_{-\lm-1}^{k}(\cos\omg) = \sfP_{-\lm}^{k+1}(\cos\omg).
\]
Transforming the above equation using~\cite[8.733 (5)]{GR07} we need to show that
\begin{equation}\label{eq:goal2}
	(k-\lm)\sfP_{\lm}^{k}(\cos\omg) = \sfP_{\lm-1}^{k+1}(\cos\omg).
\end{equation}
In view of~\cite[8.735 (1)]{GR07} and using~\eqref{eq:initial2} we get
\begin{equation}\label{eq:eq3}
\begin{split}
(k-\lm)\sfP_{\lm}^k(\cos\omg)
&\! =\! 
-(\lm+k)\cos\omg\sfP_{\lm-1}^k(\cos\omg) -\sin\omg\sfP_{\lm-1}^{k+1}(\cos\omg)\\
&
\!=\! \cos\omg\sfP_{\lm}^{k+1}(\cos\omg) -\sin\omg\sfP_{\lm-1}^{k+1}(\cos\omg).
\end{split}	
\end{equation}
Applying further the identity \cite[8.735~(3)]{GR07}
we find
\begin{equation}\label{eq:eq4}
\sfP_{\lm-1}^{k+1}(\cos\omg) = \cos\omg\sfP_{\lm}^{k+1}(\cos\omg) + (\lm-k)\sin\omg\sfP_{\lm}^{k}(\cos\omg)
\end{equation} 	  	 %
Taking the difference of~\eqref{eq:eq3} and~\eqref{eq:eq4} we get
\[
(k-\lm)\sfP_{\lm}^{k}(\cos\omg) - \sfP_{\lm-1}^{k+1}(\cos\omg) =\sin\omg\left[
(k-\lm)\sfP_{\lm}^{k}(\cos\omg) -\sfP_{\lm-1}^{k+1}(\cos\omg)
\right]. 
\]
Hence, we conclude that~\eqref{eq:goal2} holds.
\end{proof}
\begin{remark}\label{rem:anti}
	The symmetry of the spectrum of $\wt\sfT_k$, $k\in\dZ$, with respect to the origin can be alternatively  shown via an anti-commutation relation. In this argument it is more convenient to work with the unitarily equivalent operator $\sfT_k$. 
	Consider the matrix-valued function $M\colon [0,\omg]\arr\dC^{2\times2}$
	\[
		M = M(\tt) := \begin{pmatrix} \cos\tt & \sin\tt\\
		\sin\tt & -\cos\tt\end{pmatrix}
	\]
	and define the unitary and self-adjoint operator
	\[
		\cU\colon L^2(\cI_\omg;\dC^4)\arr L^2(\cI_\omg;\dC^4),\qquad 
		(\cU\psi)(\tt) := 
		\big(M(\tt)\oplus(-M(\tt)\big)\psi(\tt). 
	\]
	It is straightforward to check the following anti-commutation relations $\tau_k M= -M\tau_k$ and $\sfA_\omg M(\omg) = -M(\omg)\sfA_\omg$. Hence, we conclude that $\dom\sfT_k = \dom(\sfT_k\cU)$ and that
	$\sfT_k\cU = -\cU\sfT_k$.
	Thus, for any $\lm\in\dR$ the mapping $\cU$ is a bijection between $\ker(\sfT_k - \lm)$ and $\ker(\sfT_k+\lm)$ and therefore the spectrum of $\sfT_k$ is symmetric about the origin.
\end{remark}

Now we can combine the symmetry of the spectrum shown above with the estimate in Proposition~\ref{prop:spectrum.outside} in order to get a lower bound on the spectral gap for $\sfT_k$.
\begin{prop}\label{prop:spectrum_outside2}
	For all $k \in \ZZ$ and $\omega<\pi/2$,
	\begin{equation}\label{eq:not_sharp}
		\|\sfT_k\psi\|_{L^2(\cI_\omg;\dC^4)} \ge \left(\frac{\pi}{4\omg} + \frac12\right)\|\psi\|_{L^2(\cI_\omg;\dC^4)},\qquad\text{for all}\,\psi\in\dom\sfT_k.
	\end{equation}
	In particular, $\s(\sfT_k)\cap[-\frac12,\frac12] = \emptyset$.
\end{prop}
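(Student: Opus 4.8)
The plan is to feed the one-sided estimate of Proposition~\ref{prop:spectrum.outside} into the spectral theorem and then symmetrize it by means of Corollary~\ref{cor:model_Dirac}.

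First I would extend the inequality~\eqref{eq:estimate.below.final} from the set $\cM$ to the whole domain $\dom\sfT_k$. By Proposition~\ref{prop:spectrum.Tk} the operator $\sfT_k$ is self-adjoint and its restriction to $\cM$ is essentially self-adjoint, so $\cM$ is a core for $\sfT_k$ in the graph norm: given $\psi\in\dom\sfT_k$ there are $\psi_n\in\cM$ with $\psi_n\to\psi$ and $\sfT_k\psi_n\to\sfT_k\psi$ in $L^2(\cI_\omg;\dC^4)$. Passing to the limit in $\|(\sfT_k-\tfrac12)\psi_n\|\ge\tfrac{\pi}{4\omg}\|\psi_n\|$ gives
\[
\big\|(\sfT_k-\tfrac12)\psi\big\|_{L^2(\cI_\omg;\dC^4)}\ge\tfrac{\pi}{4\omg}\,\|\psi\|_{L^2(\cI_\omg;\dC^4)}\qquad\text{for all }\psi\in\dom\sfT_k.
\]
Since $\sfT_k$ is self-adjoint, this is equivalent to $\operatorname{dist}\big(\tfrac12,\sigma(\sfT_k)\big)\ge\tfrac{\pi}{4\omg}$, that is, $\sigma(\sfT_k)\cap\big(\tfrac12-\tfrac{\pi}{4\omg},\,\tfrac12+\tfrac{\pi}{4\omg}\big)=\emptyset$.

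Next I would use that $\sigma(\sfT_k)$ is symmetric about the origin. This follows from Corollary~\ref{cor:model_Dirac} combined with the unitary equivalence $\wt\sfT_k=\sfU^{-1}\sfT_k\sfU$ from~\eqref{eq:wtT_k}, which gives $\sigma(\sfT_k)=\sigma(\wt\sfT_k)=-\sigma(\wt\sfT_k)$. Reflecting the forbidden interval across $0$ we also obtain $\sigma(\sfT_k)\cap\big(-\tfrac12-\tfrac{\pi}{4\omg},\,-\tfrac12+\tfrac{\pi}{4\omg}\big)=\emptyset$. Here the hypothesis $\omg<\pi/2$ enters decisively: it is exactly equivalent to $\tfrac{\pi}{4\omg}>\tfrac12$, which makes the two reflected intervals overlap, so their union is the single interval $\big(-\tfrac12-\tfrac{\pi}{4\omg},\,\tfrac12+\tfrac{\pi}{4\omg}\big)$, whose interior contains $0$. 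Consequently $\operatorname{dist}\big(0,\sigma(\sfT_k)\big)\ge\tfrac{\pi}{4\omg}+\tfrac12$, and the spectral theorem yields $\|\sfT_k\psi\|\ge\big(\tfrac{\pi}{4\omg}+\tfrac12\big)\|\psi\|$ for all $\psi\in\dom\sfT_k$, which is~\eqref{eq:not_sharp}. Since $\tfrac{\pi}{4\omg}+\tfrac12>\tfrac12$, the closed interval $[-\tfrac12,\tfrac12]$ lies strictly inside the forbidden region, hence $\sigma(\sfT_k)\cap[-\tfrac12,\tfrac12]=\emptyset$.

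The argument is essentially a bookkeeping of spectral gaps, so I do not expect a real obstacle; the only points requiring a little care are the graph-norm density of $\cM$ (needed to transport~\eqref{eq:estimate.below.final} to $\dom\sfT_k$) and the observation that $\omg<\pi/2$ is precisely what forces the two shifted gaps to merge around $0$ --- for $\omg\ge\pi/2$ they would be disjoint and the method would only give the weaker bound with $\tfrac{\pi}{4\omg}-\tfrac12$ in place of $\tfrac{\pi}{4\omg}+\tfrac12$.
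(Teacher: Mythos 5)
Your proof is correct and follows essentially the same route as the paper: the shifted bound of Proposition~\ref{prop:spectrum.outside} extended from the core $\cM$ to $\dom\sfT_k$, combined with the symmetry of $\sigma(\sfT_k)$ from Corollary~\ref{cor:model_Dirac}, with $\omg<\pi/2$ forcing the gap around $0$. The only cosmetic difference is that you phrase the conclusion via merging spectral-gap intervals and make the core/graph-norm extension explicit, whereas the paper argues with the smallest positive eigenvalue (using discreteness of the spectrum) and the min-max principle; both are equivalent bookkeeping of the same estimates.
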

\begin{proof}
	Recall that $\sfT_k$ is unitarily equivalent to $\wt{\sfT}_k$.
	Hence, by Corollary~\ref{cor:model_Dirac} the spectrum of $\sfT_k$ is symmetric with respect to the origin
	and $0$ is not an eigenvalue of $\sfT_k$. Let $\lm > 0$ be the smallest positive eigenvalue of $\sfT_k$. According to the bound in Proposition~\ref{prop:spectrum.outside}
	we conclude that $|\lm - \frac12| \ge \frac{\pi}{4\omg}$. This condition implies that $\lm \ge \frac12 + \frac{\pi}{4\omg}$ and the inequality in the formulation of the proposition follows by the min-max principle. 
\end{proof}

\begin{remark}\label{rem:sharpness}
	Depending on $k \in \ZZ$, we can provide better estimates 
	allowing us to show that $\sigma(\sfT_k) \cap \left[-\frac12,\frac12\right] = \emptyset$
	dropping the condition 
	$\omega < \pi/2$.
	In detail, for $|k+\tfrac12| = |\alpha| \geq 3/2$ 
	we have 
	$\min[\alpha(\alpha-1),\alpha(\alpha+1)]  \ge 3/4$, so
	from \eqref{eq:quadratic.form.final} we conclude that 
	\begin{equation}\label{eq:estimate.below.1}
	\norm*{\left(\sfT_k -\tfrac{1}{2}\right)\psi }_{L^2(\cI_\omg;\dC^4)}^2 \geq
	\frac{3}{4}\norm{\psi}_{L^2(\cI_\omg;\dC^4)}^2, \quad \text{ for all }
	\psi \in \cM.
	\end{equation}
	For the smallest positive eigenvalue $\lm$ of $\sfT_k$ we get $\lm \ge \frac{\sqrt{3}}{2} + \frac12$ and in view of symmetry of the spectrum of $\sfT_k$ with respect to the origin this is enough to show that $\sigma(\sfT_k) \cap \left[-\frac12,\frac12\right] = \emptyset$
	for $k \leq -2$ or $k\geq 1$ and $\omega\in(0,\pi)\sm\{\frac{\pi}{2}\}$.
\end{remark}

\section{Decomposition of $\sfD_\omg$ in spherical coordinates}\label{sec:decomp}
In this section we decompose the Dirac operator $\sfD_\omg$ into the orthogonal sum of one-dimensional Dirac operators on the half-line and using this decomposition we prove the main results of the paper formulated in Theorems~\ref{thm:main1},~\ref{thm:main2}, and~\ref{thm:perturbation}.

First of all, we observe that the cone can be expressed in the spherical coordinates as $\cC_\omg = \dR_+\times \cM_\omg$ where  $\cM_\omg := [0,\omg)\times\dS^1$ is the spherical cap.
We introduce the spherical $L^2$-space on $\cC_\omg$
\[
	L^2_{\rm sph}(\cC_\omg;\dC^4) = 
	L^2(\dR_+\times\cM_\omg;r^2\sin\tt\dd\tt\dd\phi).
\]
The  Hilbert space $L^2_{\rm sph}(\cC_\omg;\dC^4)$ can be decomposed into the tensor product of weighted $L^2$-spaces
\begin{equation}\label{eq:tensor_product}
	L^2_{\rm sph}(\cC_\omg;\dC^4) = L^2(\dR_+;r^2\dd r)\otimes
	L^2(\cM_\omg; \sin\tt\dd\tt\dd\phi;\dC^4).
\end{equation}
Along with the spherical $L^2$-spaces we define the spherical first-order Sobolev space
\begin{equation}\label{eq:tensor_product}
H^1_{\rm sph}(\cC_\omg;\dC^4) = 
\big\{u\in L^2_{\rm sph}(\cC_\omg;\dC^4)\colon \p_r u, r^{-1}\nabla_{\dS^2}u\in L^2_{\rm sph}(\cC_\omg;\dC^4)\big\},
\end{equation}	
where $\nabla_{\dS^2}$ is the gradient on $\dS^2$.
Next, we introduce the unitary map
\[
	\sfV\colon L^2(\cC_\omg;\dC^4)\arr L^2_{\rm sph}(\cC_\omg;\dC^4),\qquad (\sfV u)(r,\tt,\phi) := u(r\sin\tt\cos\phi,r\sin\tt\sin\phi,r\cos\tt).
\]
In particular, we have $\sfV(H^1(\cC_\omg;\dC^4)) = H^1_{\rm sph}(\cC_\omg;\dC^4)$.
By means of the map $\sfV$ we define the operator which is unitarily equivalent to $\sfD_\omg$
\[
	 \wt{\sfD}_\omg := \sfV\sfD_\omg\sfV^{-1},\qquad 
	 \dom\wt{\sfD}_\omg := \sfV(\dom\sfD_\omg).
\]
According to~\cite[Satz 20.6]{W03} the Dirac differential expression $\cD$ can be written in the spherical coordinates as follows
\[
	\cD = \ii\aa_r\left(-\p_r-\frac{1}{r} +\frac{\cK}{r}\right),
\]
where $\aa_r$ is given by~\eqref{eq:alpha_r}
and where $\cK$ is the spin-orbit differential expression in~\eqref{eq:spin_orbit}.
With respect to the tensor product representation~\eqref{eq:tensor_product} we can decompose the Dirac operator $\wt{\sfD}_\omg$ as
\begin{equation}\label{eq:Dirac_cone_spherical}
	\wt{\sfD}_\omg u = \ii\aa_r\left(\left(-\p_r-\frac{1}{r}\right)\otimes\sfI +\frac{1}{r}\otimes\sfK_\omg\right)u,\qquad \dom\wt{\sfD}_\omg = \sfV\big(
	\dom\sfD_\omg\big),
\end{equation}
where the spin-orbit operator $\sfK_\omg$ acts in the Hilbert space $L^2(\cM_\omg;\sin\tt\dd\tt\dd\phi;\dC^4)$ and is defined as
\begin{equation}\label{eq:Komg}
\begin{aligned}
	\sfK_\omg f & \!=\! \cK f,\\
	\dom \sfK_\omg & \!=\! \left\{f\!\in\! C^\infty(\ov{\cM_\omg};\dC^4)\colon\!
	\left(\begin{smallmatrix}
	f_1(\omg,\phi)\\
	f_2(\omg,\phi)
	\end{smallmatrix}\right)\! =\!
	\left(
	\begin{smallmatrix}  
	\ii\sin\omg &-\ii\cos\omg e^{-\ii\phi} \\	-\ii\cos\omg e^{\ii\phi} &-\ii\sin\omg 
	\end{smallmatrix}\right)
	\left(\begin{smallmatrix}
	f_3(\omg,\phi)\\
	f_4(\omg,\phi)
	\end{smallmatrix}\right)
	\right\}.
\end{aligned}	
\end{equation}
In view of the above definition the domain of $\wt{\sfD}_\omg$ can be characterised more explicitly as
\begin{equation}\label{eq:explicit_domain}
	\dom\wt{\sfD}_\omg = 
	\{u\in C^\infty_0((0,\infty)\times\ov\cM_\omg)\colon
	u(r,\cdot,\cdot)\in\dom\sfK_\omg,\,\forall r >0\}.
\end{equation}

Our next goal is to show that $\sfK_\omg$ has an orthonormal basis of eigenfunctions that correspond to real eigenvalues. The latter will also imply as a by-product that the operator $\sfK_\omg$ is essentially self-adjoint in the Hilbert space $L^2(\cM_\omg;\sin\tt\dd\tt\dd\phi;\dC^4)$.
The Hilbert space $L^2(\cM_\omg;\sin\tt\dd\tt\dd\phi;\dC^4)$ can be further decomposed as the tensor product
\[
	L^2(\cM_\omg;\sin\tt\dd\tt\dd\phi;\dC^4) = 
	L^2(\cI_\omg;\sin\tt\dd\tt)\otimes L^2(\dS^1;\dC^4).
\]
We consider the orthonormal basis $\{h_{k1},h_{k2},h_{k3},h_{k4}\}_{k\in\dZ}$ of $L^2(\dS^1;\dC^4)$
given by
\[
\begin{aligned}
	h_{k1}(\phi) & = 
	\frac{1}{\sqrt{2\pi}}\begin{pmatrix} e^{\ii k\phi} \\ 0\\0\\0\end{pmatrix},\qquad
	&h_{k2}(\phi) &= 
	\frac{1}{\sqrt{2\pi}}\begin{pmatrix} 0 \\ e^{\ii (k+1)\phi}\\ 0 \\ 0\end{pmatrix},\\
	h_{k3}(\phi) & = 
	\frac{1}{\sqrt{2\pi}}\begin{pmatrix} 0 \\ 0\\e^{\ii k\phi}\\0\end{pmatrix},\qquad
	&h_{k4}(\phi)& = 
	\frac{1}{\sqrt{2\pi}}\begin{pmatrix} 0 \\ 0\\ 0\\ e^{\ii (k+1)\phi}\end{pmatrix}.
\end{aligned}	
\]
Consider the subspace $\cF_k = {\rm span}\,\{h_{k1}, h_{k2},h_{k3},h_{k4}\}$, $k\in\dZ$, of $L^2(\dS^1;\dC^4)$. Now, we have the decomposition
\begin{equation}\label{eq:ortho}
	L^2(\cM_\omg;\sin\tt\dd\tt\dd\phi;\dC^4) =
	\bigoplus_{k\in\dZ} \big(L^2(\cI_\omg;\sin\tt\dd\tt)\otimes\cF_k\big) 
\end{equation}
and the isomorphism
\begin{equation}\label{eq:isomorph}
	L^2(\cI_\omg;\sin\tt\dd\tt)\otimes\cF_k\simeq  L^2(\cI_\omg;\sin\tt\dd\tt;\dC^4).
\end{equation}
By the spectral theorem the family of eigenfunctions of $\wt{\sfT}_k$ constructed in Proposition~\ref{prop:model_Dirac} upon normalization constitutes an orthonormal basis in the Hilbert space $L^2(\cI_\omg;\sin\tt\dd\tt;\dC^4)$.
In view of the orthogonal decomposition~\eqref{eq:ortho}
and the isomorphism~\eqref{eq:isomorph} we can construct an orthonormal basis in the Hilbert space $L^2(\cM_\omg;\sin\tt\dd\tt\dd\phi;\dC^4)$ by transplanting the basis of normalized eigenfunctions of $\wt{\sfT}_k$ into the respective fiber $L^2(\cI_\omg;\sin\tt\dd\tt)\otimes\cF_k$ in the decomposition~\eqref{eq:ortho}.
To this aim recall the definition of the following discrete subsets of the real axis:
\begin{equation}\label{eq:Zk}
\begin{aligned}
	\cZ_k &:= \big\{\lm\in\dR\colon (\lm+k+1)\sfP_\lm^{-k-1}(\cos\omg) = \sfP_{\lm-1}^{-k}(\cos\omg)\big\}, & k&\ge 0,\\
	\cZ_k &:=\big\{\lm\in\dR\colon (\lm+k)\sfP_{\lm-1}^{k}(\cos\omg) = -\sfP_{\lm}^{k+1}(\cos\omg)\big\}, & k&\le -1.
\end{aligned}
\end{equation}	
Let us introduce the following  two-component functions $\{\Phi_{k,\lm}^\pm\}_{k\in\dZ,\lm\in\cZ_k}$ on $\cM_\omg$
\[
\begin{aligned}
	\Phi_{k,\lm}^\pm(\tt,\phi) & := 
	\begin{pmatrix}
	\sfP^{-k}_{\pm\lm-1}(\cos\tt)e^{\ii k\phi}\\
	(k\mp\lm+1)\sfP^{-k-1}_{\pm\lm-1}(\cos\tt)e^{\ii (k+1)\phi}
	\end{pmatrix},\qquad  &k \ge 0,\,\lm\in\cZ_k,\\
	\Phi_{k,\lm}^\pm(\tt,\phi) & := 
	\begin{pmatrix}
	(\lm\pm k)\sfP^{k}_{\pm\lm-1}(\cos\tt)e^{\ii k\phi}\\
	\pm\sfP^{k+1}_{\pm\lm-1}(\cos\tt)e^{\ii (k+1)\phi}
	\end{pmatrix},\qquad & k \le-1,\,\lm\in\cZ_k.
\end{aligned}
\]
From Proposition~\ref{prop:model_Dirac} and Corollary~\ref{cor:model_Dirac}
we get that the following family $\{\Psi_{k,\lm}^+,\Psi_{k,\lm}^-\}_{k\in\dZ,\lm\in\cZ_k}$ of vector-valued functions on $\cM_\omg$: 
\[
	\Psi_{k,\lm}^\pm = c_{k,\lm}(\pm\ii\Phi_{k,\lm}^\pm)\oplus\Phi_{k,\lm}^\pm,\qquad k\in\dZ,\,\lm\in\cZ_k, 
\]
is an orthonormal basis of $L^2(\cM_\omg;\sin\tt\dd\tt\dd\phi;\dC^4)$, where $\{c_{k,\lm}\}_{k\in\dZ,\lm\in\cZ_k}$ are just normalizing constants.

\begin{lem}\label{lem:spinors}
	Let $k\in\dZ$ and let $\lm\in\cZ_k$ be fixed. Then the following hold:
	\begin{myenum}
		\item $\Psi_{k,\lm}^\pm\in\dom\sfK_\omg$;
		\item $\sfK_\omg\Psi_{k,\lm}^\pm = \pm\lm \Psi_{k,\lm}^\pm$;
		\item $\ii\aa_r\Psi_{k,\lm}^+ = -\Psi_{k,\lm}^-$ and 
		$\ii\aa_r\Psi_{k,\lm}^- = \Psi_{k,\lm}^+$ with $\aa_r$ as in~\eqref{eq:alpha_r}.
	\end{myenum}
\end{lem}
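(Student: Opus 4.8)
The plan is to work fibre by fibre in the orthogonal decomposition~\eqref{eq:ortho} and reduce items (i) and (ii) to the one‑dimensional analysis of the model operators $\wt\sfT_k$. First I would substitute a function of the form $f=\varphi_1 h_{k1}+\varphi_2 h_{k2}+\varphi_3 h_{k3}+\varphi_4 h_{k4}$, with $\varphi_j=\varphi_j(\tt)$, into the explicit $4\times4$ matrix form of $\cK=\sfI_4+\mathrm{diag}(\cS,\cS)$ from Subsection~\ref{ssec:spin-orbit}. Since $\p_\phi$ produces the scalar $\ii k$ on the factor $e^{\ii k\phi}$ and $\ii(k+1)$ on $e^{\ii(k+1)\phi}$, a short computation gives that $\cK f$ is again of the same form, with angular coefficients $\wt\cT_k(\varphi_1,\varphi_2,\varphi_3,\varphi_4)^\top$, where $\wt\cT_k$ is the matrix differential expression introduced before~\eqref{eq:wtT_k}; moreover, after cancelling the common phases, the boundary condition in~\eqref{eq:Komg} at $\tt=\omg$ collapses to $\left(\begin{smallmatrix}\varphi_1(\omg)\\\varphi_2(\omg)\end{smallmatrix}\right)=\sfA_\omg\left(\begin{smallmatrix}\varphi_3(\omg)\\\varphi_4(\omg)\end{smallmatrix}\right)$, i.e.\ precisely the condition defining $\dom\wt\sfT_k$ in~\eqref{eq:wtTk}, with $\sfA_\omg$ as in~\eqref{eq:matrix_A}. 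Hence, under the isomorphism~\eqref{eq:isomorph}, the restriction of $\sfK_\omg$ to the $k$-th fibre is a restriction of the self-adjoint operator $\wt\sfT_k$.

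Items (i) and (ii) then follow from Proposition~\ref{prop:model_Dirac} and Corollary~\ref{cor:model_Dirac}. In the fibre coordinates, $\Phi^+_{k,\lm}$ (resp.\ $\Phi^-_{k,\lm}$) is exactly the two-component Ferrers vector $\varphi$ appearing in Proposition~\ref{prop:model_Dirac} with spectral parameter $\lm$ (resp.\ $-\lm$), so $\Psi^+_{k,\lm}=c_{k,\lm}\bigl((\ii\Phi^+_{k,\lm})\oplus\Phi^+_{k,\lm}\bigr)$ corresponds to the eigenfunction $c_{k,\lm}(\ii\varphi\oplus\varphi)$ of $\wt\sfT_k$ with eigenvalue $\lm$; since $\lm\in\cZ_k$ means, by~\eqref{eq:Zk}, that $\lm$ solves~\eqref{eq:evs_k_pos1} for $k\ge0$ and~\eqref{eq:evs_k_neg1} for $k\le-1$, Corollary~\ref{cor:model_Dirac} shows that $-\lm$ solves the complementary equation~\eqref{eq:evs_k_pos2} (resp.~\eqref{eq:evs_k_neg2}), whose associated eigenfunction $c_{k,\lm}(-\ii\varphi\oplus\varphi)$ with eigenvalue $-\lm$ is precisely $\Psi^-_{k,\lm}$. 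Translating back through~\eqref{eq:isomorph} this gives $\wt\sfT_k$-membership and $\sfK_\omg\Psi^\pm_{k,\lm}=\pm\lm\,\Psi^\pm_{k,\lm}$, provided one also checks the smoothness up to $\ov{\cM_\omg}$ required by~\eqref{eq:Komg}. The only delicate point is regularity at the north pole $\tt=0$: for an integer $\mu\ge0$ the Ferrers function $\sfP^{-\mu}_\nu(\cos\tt)$ equals $(\sin\tt)^\mu$ times a function real-analytic in $\cos\tt$ near $\cos\tt=1$, so each component of $\Psi^\pm_{k,\lm}$ is $(\sin\tt)^{|\mu|}e^{\ii\mu\phi}$ times such an analytic factor (with matching power and frequency), hence extends smoothly across the north pole of $\dS^2$ exactly as the classical spherical harmonics; since $\omg<\pi$, the argument $\cos\tt$ stays away from $-1$, so no further singularity arises on $\ov{\cM_\omg}$.

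For (iii) I would use the block form of $\aa_r$ from~\eqref{eq:alpha_r}, namely $\aa_r=\left(\begin{smallmatrix}0&\Sigma\\\Sigma&0\end{smallmatrix}\right)$ with $\Sigma(\tt,\phi)=\left(\begin{smallmatrix}\cos\tt & e^{-\ii\phi}\sin\tt\\ e^{\ii\phi}\sin\tt & -\cos\tt\end{smallmatrix}\right)$, which is Hermitian with $\Sigma^2=I_2$, so $\aa_r^2=\sfI_4$. Because $\Psi^\pm_{k,\lm}=c_{k,\lm}\bigl((\pm\ii\Phi^\pm_{k,\lm})\oplus\Phi^\pm_{k,\lm}\bigr)$, one gets $\ii\aa_r\Psi^+_{k,\lm}=c_{k,\lm}\bigl((\ii\Sigma\Phi^+_{k,\lm})\oplus(-\Sigma\Phi^+_{k,\lm})\bigr)$, so the identity $\ii\aa_r\Psi^+_{k,\lm}=-\Psi^-_{k,\lm}$ is equivalent to the single vector identity $\Sigma\Phi^+_{k,\lm}=\Phi^-_{k,\lm}$. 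Cancelling the phases $e^{\ii k\phi}$, $e^{\ii(k+1)\phi}$, this unwinds to two three-term contiguous relations among $\sfP^{-k}_{\lm-1}$, $\sfP^{-k-1}_{\lm-1}$, $\sfP^{-k}_{\lm}$ evaluated at $\cos\tt$ (for $k\ge0$; and the analogous relations for $k\le-1$), which are exactly the Ferrers identities \cite[8.735]{GR07}, together with $\sfP^{-\mu}_{-\lm-1}=\sfP^{-\mu}_{\lm}$ — the very relations already used in the proof of Corollary~\ref{cor:model_Dirac}, cf.~\eqref{eq:eq1}--\eqref{eq:eq2}. The second identity $\ii\aa_r\Psi^-_{k,\lm}=\Psi^+_{k,\lm}$ then follows for free by applying $\ii\aa_r$ once more and using $(\ii\aa_r)^2=-\aa_r^2=-\sfI_4$.

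The main obstacle is essentially bookkeeping: carrying out the fibre reduction of $\cK$ and matching it to $\wt\cT_k$ and to the boundary condition of $\dom\wt\sfT_k$ without sign or phase errors, together with singling out the correct contiguous relations for $\Sigma\Phi^+_{k,\lm}=\Phi^-_{k,\lm}$ in both the $k\ge0$ and $k\le-1$ branches. Once the fibre reduction is in place, (i)--(iii) are immediate consequences of Proposition~\ref{prop:model_Dirac}, Corollary~\ref{cor:model_Dirac}, and the standard regularity of spherical-harmonic-type functions at the pole.
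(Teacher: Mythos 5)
Your proposal is correct and follows essentially the same route as the paper: reduce to the fibre operator $\wt\sfT_k$ by separating the $e^{\ii k\phi}$, $e^{\ii(k+1)\phi}$ phases (so that the boundary condition in \eqref{eq:Komg} collapses to the one in \eqref{eq:wtTk}), invoke Proposition~\ref{prop:model_Dirac} and Corollary~\ref{cor:model_Dirac} for (i)--(ii), and verify (iii) via the Ferrers contiguous relations \cite[8.735]{GR07}. The only (harmless) deviation is that you obtain the second identity of (iii) from the first via $(\ii\aa_r)^2=-\sfI_4$ instead of repeating the computation, which is a slight simplification of the paper's argument.
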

\begin{proof}
	(i) The fact that $\Psi_{k,\lm}^\pm\in C^\infty(\ov{\cM_\omg})$ follows from the
	$C^\infty$-smoothness of the Ferrers functions and the behaviour of the Ferrers functions as $x\arr0^+$ (\cite[Eqs. (14.3.1) and (15.2.1)]{DLMF}. Thus, it only remains to check the boundary condition.
	Let use the shorthand $\Psi := \Psi_{k,\lm}^\pm$. In view of~\eqref{eq:Komg} we need first to check that $\Psi$ satisfies
	\[
	\begin{pmatrix}
	\Psi_1(\omg,\phi)\\
	\Psi_2(\omg,\phi)
	\end{pmatrix} 
	=
	\begin{pmatrix}  
	\ii\sin\omg &-\ii\cos\omg e^{-\ii\phi} \\	-\ii\cos\omg e^{\ii\phi} &-\ii\sin\omg 
	\end{pmatrix}
	\begin{pmatrix}
	\Psi_3(\omg,\phi)\\
	\Psi_4(\omg,\phi)
	\end{pmatrix}.
	\] 
	Taking the structure of $\Psi$ into account, the latter is equivalent to
	\begin{equation}\label{eq:Psi_cond}
	\begin{pmatrix}
	\Psi_1(\omg,0)\\
	\Psi_2(\omg,0)
	\end{pmatrix} 
	=
	\begin{pmatrix}  
	\ii\sin\omg &-\ii\cos\omg \\	-\ii\cos\omg  &-\ii\sin\omg 
	\end{pmatrix}
	\begin{pmatrix}
	\Psi_3(\omg,0)\\
	\Psi_4(\omg,0)
	\end{pmatrix}.
	\end{equation}
	By its construction $\Psi(\cdot,0)\in\dom\wt{\sfT}_k$ with $\dom\wt{\sfT}_k$ specified in~\eqref{eq:wtTk}. Hence, it follows from~\eqref{eq:wtTk} that~\eqref{eq:Psi_cond} holds.
	
	\noindent (ii)
	Let $\Psi = \Psi_{k,\lm}^+$. The case $\Psi = \Psi_{k,\lm}^-$ is analogous.
	\[
		\sfK_\omg\Psi = \begin{pmatrix}
		(\wt{\sfT}_k\Psi(\cdot,0))_1 e^{\ii k\phi} \\
		(\wt{\sfT}_k\Psi(\cdot,0))_2 e^{\ii (k+1)\phi}\\
		(\wt{\sfT}_k\Psi(\cdot,0))_3 e^{\ii k\phi}\\
		(\wt{\sfT}_k\Psi(\cdot,0))_4 e^{\ii (k+1)\phi}
		\end{pmatrix} = 
		\lm\begin{pmatrix}
		\Psi_1(\cdot,0) e^{\ii k\phi} \\
		\Psi_2(\cdot,0) e^{\ii (k+1)\phi}\\
		\Psi_3(\cdot,0) e^{\ii k\phi}\\
		\Psi_4(\cdot,0) e^{\ii (k+1)\phi}
		\end{pmatrix} = \lm\Psi,
	\]
	where we used in the penultimate step that $\Psi(\cdot,0)$ is an eigenfunction of $\wt{\sfT}_k$ corresponding to the eigenvalue $\lm$.
	
	\noindent (iii) Let $k\ge0$. We will show first the identity $\ii\aa_r\Psi_{k,\lm}^+ = -\Psi_{k,\lm}^-$. To this aim we first verify that
	\[
	\begin{aligned}
		\begin{pmatrix}
		\cos\tt & e^{-\ii\phi}\sin\tt\\
		e^{\ii\phi}\sin\tt	&-\cos\tt
		\end{pmatrix}\Phi_{k,\lm}^+
		&=\begin{pmatrix}
		\big(\cos\tt\sfP_{\lm-1}^{-k}(\cos\tt)+\sin\tt(k-\lm+1)\sfP_{\lm-1}^{-k-1}(\cos\tt)\big)e^{\ii k\phi}\\
		\big(\sin\tt\sfP_{\lm-1}^{-k}(\cos\tt)-\cos\tt(k-\lm+1)\sfP_{\lm-1}^{-k-1}(\cos\tt)\big)e^{\ii (k+1)\phi}
		\end{pmatrix}\\
		&=
		\begin{pmatrix} \sfP_{-\lm-1}^{-k}(\cos\tt) e^{\ii k\phi}\\
		(\lm+k+1)\sfP^{-k-1}_{-\lm-1}(\cos\tt)e^{\ii(k+1)\phi}
		\end{pmatrix} = \Phi_{k,\lm}^{-},
	\end{aligned}	
	\]
	where we applied the identity \cite[8.735 (4)]{GR07} in the first  row and the identity~\cite[8.735(1)]{GR07} in the second row and used the relation $\sfP_\nu^\mu = \sfP_{-\nu-1}^\mu$.
	Hence, we obtain that
	\[
	\begin{aligned}
		\ii\aa_r\Psi_{k,\lm}^+&=
		c_{k,\lm}
		\left(\ii\begin{pmatrix}
		\cos\tt & e^{-\ii\phi}\sin\tt\\
		e^{\ii\phi}\sin\tt	&-\cos\tt
		\end{pmatrix}\Phi_{k,\lm}^+
		\right)\oplus
		\left(-\begin{pmatrix}
		\cos\tt & e^{-\ii\phi}\sin\tt\\
		e^{\ii\phi}\sin\tt	&-\cos\tt
		\end{pmatrix}\Phi_{k,\lm}^+
		\right)\\
		& = c_{k,\lm}(\ii\Phi_{k,\lm}^-)\oplus(-\Phi_{k,\lm}^-)=  -\Psi_{k,\lm}^-,	\end{aligned}	
	\]
	Next, we will show identity $
	\ii\aa_r\Psi_{k,\lm}^- = \Psi_{k,\lm}^+$.
         To this aim we  verify that
		\[
		\begin{aligned}
		\begin{pmatrix}
		\cos\tt & e^{-\ii\phi}\sin\tt\\
		e^{\ii\phi}\sin\tt	&-\cos\tt
		\end{pmatrix}\Phi_{k,\lm}^-
		&=\begin{pmatrix}
		\big(\cos\tt\sfP_{-\lm-1}^{-k}(\cos\tt)+\sin\tt(k+\lm+1)\sfP_{-\lm-1}^{-k-1}(\cos\tt)\big)e^{\ii k\phi}\\
		\big(\sin\tt\sfP_{-\lm-1}^{-k}(\cos\tt)-\cos\tt(k+\lm+1)\sfP_{-\lm-1}^{-k-1}(\cos\tt)\big)e^{\ii (k+1)\phi}
		\end{pmatrix}\\
		&=
		\begin{pmatrix} \sfP_{\lm-1}^{-k}(\cos\tt) e^{\ii k\phi}\\
		(k-\lm+1)\sfP^{-k-1}_{\lm-1}(\cos\tt)e^{\ii(k+1)\phi}
		\end{pmatrix} = \Phi_{k,\lm}^{+},
		\end{aligned}	
	\]
	where we again applied the identity \cite[8.735 (4)]{GR07} in the first row and the identity~\cite[8.735(1)]{GR07} in the second row and used the relation $\sfP_{\nu}^\mu = \sfP_{-\nu-1}^\mu$.
	
Hence, we obtain that
\[
\begin{aligned}
\ii\aa_r\Psi_{k,\lm}^-&= c_{k,\lm}
\left(\ii\begin{pmatrix}
\cos\tt & e^{-\ii\phi}\sin\tt\\
e^{\ii\phi}\sin\tt	&-\cos\tt
\end{pmatrix}\Phi_{k,\lm}^-
\right)\oplus
\left(\begin{pmatrix}
\cos\tt & e^{-\ii\phi}\sin\tt\\
e^{\ii\phi}\sin\tt	&-\cos\tt
\end{pmatrix}\Phi_{k,\lm}^-
\right)\\
& = c_{k,\lm}(\ii\Phi_{k,\lm}^+)\oplus(\Phi_{k,\lm}^+)=  \Psi_{k,\lm}^+,	\end{aligned}	
\]
	The proof in the case that $k\leq -1$ is analogous, using \cite[8.735 (1)]{GR07}, \cite[8.735 (4)]{GR07} and \cite[Eq. (14.9.5)]{DLMF}, so it will be omitted.
\end{proof}	
Let us introduce the orthogonal projectors
in the Hilbert space $L^2_{\rm sph}(\cC_\omg;\dC^4)$
\[
	(\Pi_{k,\lm}^\pm u)(r,\tt,\phi)\! := \!
	\Psi_{k,\lm}^\pm(\tt,\phi)
	\int_{\cM_\omg}
	u(r,\tt,\phi)\ov{\Psi_{k,\lm}^\pm(\tt,\phi)}\sin\tt\dd\tt\dd\phi,\quad k\in\dZ, \lm\in\cZ_k.
\]
These projectors induce the orthogonal decomposition
\begin{equation}\label{eq:decomposition}
	L^2_{\rm sph}(\cC_\omg;\dC^4) = \bigoplus_{k\in\dZ}\bigoplus_{\lm\in\cZ_k}
	\cE_{k,\lm},
\end{equation}
where the fiber spaces are defined as
\begin{equation}
	\cE_{k,\lm} := L^2(\dR_+;r^2\dd r)\otimes {\rm span}\,\{\Psi_{k,\lm}^+, \Psi_{k,\lm}^-\}\simeq L^2(\dR_+;r^2\dd r;\dC^2).
\end{equation}
For the sake of convenience we introduce the unitary transforms $\sfW_{k,\lm}\colon \cE_{k,\lm}\arr L^2(\dR_+;\dC^2)$, $\lm\in\cZ_k$, $k\in\dZ$
\[
	(\sfW_{k,\lm} u )(r) := r\begin{pmatrix} \left(u(r,\cdot,\cdot),\Psi_{k,\lm}^+\right)_{\cM_\omg}\\
	\left(u(r,\cdot,\cdot),\Psi_{k,\lm}^-\right)_{\cM_\omg}
	\end{pmatrix},
\]
where $(\cdot,\cdot)_{\cM_\omg}$ stands for inner product in $L^2(\cM_\omg;\sin\tt\dd\tt\dd\phi;\dC^4)$.
Now we have all the tools at our disposal to prove the main results of the paper.
\begin{proof}[Proof of Theorem~\ref{thm:main1}]
	Pick a function $u\in\dom\wt{\sfD}_\omg\cap\cE_{k,\lm}$. By definition, $u$ writes as
	\[
		u(r,\tt,\phi) = 
		\frac{\psi_+(r)}{r}\Psi_{k,\lm}^+(\tt,\phi)
		+
		\frac{\psi_-(r)}{r}\Psi_{k,\lm}^-(\tt,\phi)
	\]	
	with arbitrary $\psi_\pm \in C^\infty_0(\dR_+)$.
	Applying the operator $\wt{\sfD}_\omg$ in~\eqref{eq:Dirac_cone_spherical} to $u$
	and using Lemma~\ref{lem:spinors} we get
	\begin{equation}\label{eq:action}
	\begin{aligned}
		\wt\sfD_\omg u & = \frac{\ii\aa_r}{r}\left[
		\Psi_{k,\lm}^+\left(-\psi_+' + \frac{\lm\psi_+}{r}\right) + \Psi_{k,\lm}^-
		\left(-\psi_-' - \frac{\lm\psi_-}{r}\right)
		\right] \\
		&=
		\frac{1}{r}\left[
				\Psi_{k,\lm}^-\left(\psi_+' - \frac{\lm\psi_+}{r}\right) + \Psi_{k,\lm}^+
				\left(-\psi_-' - \frac{\lm\psi_-}{r}\right)
				\right].
	\end{aligned}
	\end{equation}
	Hence, we conclude that the inclusion $\wt{\sfD}_\omg\big(\dom\wt{\sfD}_\omg\cap\cE_{k,\lm}\big)\subset\cE_{k,\lm}$ holds. Thus, for any $\lm\in\cZ_k$ and $k\in\dZ$ the operators 
	\[
		d_{k,\lm}u:=\wt\sfD_\omg u,\qquad \dom d_{k,\lm} := \dom\wt{\sfD}_\omg\cap\cE_{k,\lm},
	\]
	are well defined. Moreover, relying on formula~\eqref{eq:action} we find that
	\[
		\sfW_{k,\lm}d_{k,\lm}\sfW_{k,\lm}^{-1} = \mathbf{d}_{\lm},\qquad \forall \lm\in\cZ_k, k\in\dZ.,
	\]
	where $\mathbf{d}_{\lm}$ is defined in~\eqref{eq:fibre}.
	Hence, we conclude that
	\[
		\wt{\sfD}_\omg\simeq\bigoplus_{k\in\dZ}\bigoplus_{\lm\in\cZ_k}\mathbf{d}_{\lm}.
	\]
	In view of the identity $\mathbf{d}_\lm = \s_2\mathbf{d}_{-\lm}\s_2$ we get that $\mathbf{d}_\lm$ and $\mathbf{d}_{-\lm}$ are unitarily equivalent. By 
	Proposition~\ref{prop:model_Dirac} and Corollary~\ref{cor:model_Dirac} we have
	that the simple spectrum of $\sfT_k$ is  given by  $\s(\sfT_k) = \cZ_k\cup(-\cZ_k)$, where $\cZ_k\cap(-\cZ_k)=\emptyset$ for all $k\in\dZ$.  Hence, we end up with
	\[
	\begin{aligned}
		\wt\sfD_\omg&\simeq \bigoplus_{k\in\dZ}
		\left[\left(\medoplus_{\lm\in\cZ_k\cap(0,\infty)}\mathbf{d}_\lm\right)\oplus
		\left(\medoplus_{\lm\in\cZ_k\cap(-\infty,0)}\mathbf{d}_\lm\right)\right]\\
		&\simeq
		\bigoplus_{k\in\dZ}
		\left[\left(\medoplus_{\lm\in\cZ_k\cap(0,\infty)}\mathbf{d}_\lm\right)\oplus
		\left(\medoplus_{\lm\in-\cZ_k\cap(0,\infty)}\mathbf{d}_\lm\right)\right]\simeq
	\bigoplus_{k\in\dZ}\bigoplus_{\lm\in\s(\sfT_k)\cap(0,\infty)}\mathbf{d}_\lm.
	\qedhere
	\end{aligned}
	\]
\end{proof}
\begin{proof}[Proof of Theorem~\ref{thm:main2}] 
	It follows from Proposition~\ref{prop:spectrum_outside2} that $\cZ_k\cap [-\frac12,\frac12] = \varnothing$ for any opening angle $\omg\in(0,\pi/2)$ and all $k\in\dZ$. Hence, all the fiber operators $\mathbf{d}_{\lm}$ in the orthogonal decomposition~\eqref{eq:Dirac_ortho} of $\wt{\sfD}_\omg$ are essentially self-adjoint by
	Proposition~\ref{prop:1D_Dirac}\,(ii). Thus, the Dirac operator $\wt{\sfD}_\omg$ and hence also $\sfD_\omg$ are essentially self-adjoint. 
	
	Now it remains to characterise the closure of $\sfD_\omg$.
	Let $\omg \in(0,\pi/2)$ be fixed.
	Let us introduce the symmetric densely defined operator
	in the Hilbert space $L^2(\cC_\omg;\dC^4)$
	\[
		\sfT u := \cD u,\qquad\dom\sfT = \big\{u\in H^1(\cC_\omg;\dC^4)\colon u|_{\p\cC_\omg} + \ii\beta(\aa\cdot\nu_\omg)u|_{\p\cC_\omg} = 0\big\},
	\]
	\cf Subsection~\ref{ssec:def}.  Our aim is to prove that $\sfT = \ov{\sfD_\omg}$. Since $\sfD_\omg$ is an essentially self-adjoint restriction of $\sfT$, it suffices to show that $\sfT$ is self-adjoint.  	Passing to the unitary equivalent operator $\wt\sfT = \sfV\sfT\sfV^{-1}$,
	we can repeat the construction in the proof of Theorem~\ref{thm:main1} with $\wt{\sfD}_\omg$ replaced by $\wt\sfT$.
	In this way we will get an orthogonal decomposition
	\[
		\wt{\sfT} \simeq \bigoplus_{k\in\dZ}\bigoplus_{\lm\in\cZ_k}\mathbf{t}_{k,\lm}
	\]
	where
	\[
	\begin{aligned}
		\mathbf{t}_{k,\lm}\psi &\!=\!\begin{pmatrix} 0 & -\frac{\dd}{\dd x} - \frac{\lm}{x}\\
		\frac{\dd}{\dd x} -\frac{\lm}{x} & 0
		\end{pmatrix}\psi,\\
		\dom\mathbf{t}_{k,\lm} &= \left\{\psi \!=\! (\psi_+,\psi_-)\colon\dR_+\arr\dC^2\colon \frac{\psi_+(r)}{r}\Psi_{k,\lm}^+(\tt,\varphi)\! +\! \frac{\psi_-(r)}{r}\Psi_{k,\lm}^-(\tt,\varphi)\in H^1_{\rm sph}(\cC_\omg;\dC^4)\right\},
		\end{aligned}
	\]
	are symmetric operators in the Hilbert space $L^2(\dR_+;\dC^2)$.
	On the other hand we know that $\ov{\wt{\sfD}_\omg}$ is self-adjoint and that the orthogonal decomposition
	\[
		\ov{\wt{\sfD}_\omg}\simeq\bigoplus_{k\in\dZ}\bigoplus_{\lm\in\cZ_k}\ov{\mathbf{d}_{\lm}}.
	\]
	holds.
	Since by Proposition~\ref{prop:spectrum_outside2} it holds that $\cZ_k\cap[-\frac12,\frac12] = \emptyset$, in view of Proposition~\ref{prop:1D_Dirac}\,(ii) in order to conclude the proof it it suffices to check that
	\[
		\dom\ov{\mathbf{d}_\lm} = H^1_0(\dR_+;\dC^2)\subset \dom\mathbf{t}_{k,\lm},\qquad k\in\dZ,\,\lm\in\cZ_k.
	\]
	To this aim it is enough show that
	for any $\psi \in H^1_0(\dR_+)$ we have
	\[
		v(r,\tt,\phi) = \frac{\psi(r)}{r}\Psi_{k,\lm}^\pm(\tt,\varphi)\in H^1_{\rm sph}(\cC_\omg;\dC^4).
	\]
	Using the expression for the gradient in the spherical coordinates we find that
	\[
	\begin{aligned}
		\|v\|^2_{H^1_{\rm sph}(\cC_\omg;\dC^4)}
		&=\int_0^\infty
		\left(\left|\psi'(r) -\frac{\psi(r)}{r}\right|^2 
		+ \frac{|\psi(r)|^2
		}{r^2}\int_{\cM_\omg}
		|(\nabla_{\dS^2}\Psi_{k,\lm}^\pm)(\tt,\varphi)|^2\sin\tt\dd\tt\dd\phi\right)\dd r	\\
		&\le\int_0^\infty\left(
		2|\psi'(r)|^2 
		+ \frac{|\psi(r)|^2
		}{r^2}\left(2+\int_{\cM_\omg}
		|(\nabla_{\dS^2}\Psi_{k,\lm}^\pm)(\tt,\varphi)|^2\sin\tt\dd\tt\dd\phi\right)\right)\dd r	\\
		& < \infty,
	\end{aligned}	
	\]
	where we used the Hardy inequality
	and that $\nabla_{\dS^2}\Psi_{k,\lm}^\pm$ is bounded in the last step.
\end{proof}

The proof of Proposition~\ref{prop:quantumdot} is inspired by the arguments for the analogous problem in the two-dimensional setting in \cite{PV19} and is a consequence of the following proposition and Remark~\ref{rmk:mit_minus}.
\begin{prop}
Let $\theta \in [0,2\pi) \setminus \{\frac{\pi}2,\frac{3\pi}2\}$; let moreover $\sfD_\omg$, $\sfD_\omg^{-}$ and $\sfD_\omg^\theta$ be defined as 
in~\eqref{eq:Dirac_operator}, \eqref{eq:Dirac_operator_-} and \eqref{eq:Dirac_operator_quantumdot} respectively. Then there exists a self-adjoint positive real matrix $M_{\tt}\in\dC^{4\times 4}$ such that 
\begin{equation*}
\sfD_\omg^{\theta} = 
\begin{cases}
M_\theta \sfD_\omg M_\theta, \quad &\theta \in [0,\pi/2) \cup (3\pi/2,2\pi), \\
M_\theta \sfD_\omg^- M_\theta, \quad &\theta \in (\pi/2, 3\pi/2).
\end{cases}
\end{equation*}
\end{prop}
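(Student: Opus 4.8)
The plan is as follows. All three operators $\sfD_\omg$, $\sfD_\omg^{-}$ and $\sfD_\omg^\theta$ act as the very same differential expression $\cD$ and differ only through the linear constraint imposed on the boundary trace $u|_{\p\cC_\omg}$; we shall conjugate $\sfD_\omg$ (or $\sfD_\omg^{-}$) by a constant, real, positive-definite matrix that leaves $\cD$ invariant and carries one such constraint onto another. For $\mu>0$ put
\[
	M := \begin{pmatrix} \mu I_2 & 0\\ 0 & \mu^{-1}I_2\end{pmatrix}\in\dC^{4\times4}.
\]
The matrix $M$ is real, self-adjoint and positive. Since $\mu^{\pm1}$ are scalars and each Dirac matrix $\aa_j$ is block anti-diagonal, one checks that $M\aa_j M = \aa_j$ for $j=1,2,3$; because $M$ is constant this yields $M\,\cD(Mu) = \cD u$ for every $u$, and $M$ maps $C^\infty_0(\ov{\cC_\omg}\sm\mathbf{0};\dC^4)$ bijectively onto itself. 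Consequently $M\sfD_\omg M$ and $M\sfD_\omg^{-} M$ are well-defined operators, both act as $\cD$, and their domains consist of those $u\in C^\infty_0(\ov{\cC_\omg}\sm\mathbf{0};\dC^4)$ for which $Mu|_{\p\cC_\omg}$ satisfies the MIT bag boundary condition, respectively the boundary condition defining $\sfD_\omg^{-}$.

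The second step is to bring all boundary conditions to a common one-parameter form. Set $n_\phi := \cos\omg\cos\phi\,\s_1 + \cos\omg\sin\phi\,\s_2 - \sin\omg\,\s_3$, a Hermitian $2\times2$ matrix with $n_\phi^2 = I_2$, so that $\aa\cdot\nu_\omg = \left(\begin{smallmatrix} 0 & n_\phi\\ n_\phi & 0\end{smallmatrix}\right)$ along $\p\cC_\omg$. Writing a boundary trace as $u|_{\p\cC_\omg} = (v,w)^\top$ with $v=(u_1,u_2)^\top$, $w=(u_3,u_4)^\top$, a short computation using $n_\phi^2 = I_2$ shows that the MIT bag condition is equivalent to $v = -\ii\,n_\phi w$, the boundary condition of $\sfD_\omg^{-}$ to $v = \ii\,n_\phi w$, and, for $\theta\ne\tfrac\pi2$, the quantum dot condition to $v = c_\theta\,n_\phi w$ with $c_\theta := \ii\cos\theta/(\sin\theta - 1)\in\dC$ (in accordance with $c_0 = -\ii$ and $c_\pi = \ii$). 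In each case the scalar multiplying $n_\phi$ is nonzero and independent of $\phi$, and under $u\mapsto Mu$ the condition $v = c\,n_\phi w$ becomes $v = (\mu^{-2}c)\,n_\phi w$. Hence conjugation by $M$ sends the boundary parameter $-\ii$ of $\sfD_\omg$ to $-\ii\mu^{-2}$, and the boundary parameter $\ii$ of $\sfD_\omg^{-}$ to $\ii\mu^{-2}$.

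It remains to match parameters. To realise $\sfD_\omg^\theta = M_\theta\sfD_\omg M_\theta$ we need $-\ii\mu^{-2} = c_\theta$, that is $\mu^2 = (1-\sin\theta)/\cos\theta$; since $1-\sin\theta>0$ for $\theta\ne\tfrac\pi2$, this admits a positive root $\mu_\theta$ exactly when $\cos\theta>0$, i.e.\ for $\theta\in[0,\tfrac\pi2)\cup(\tfrac{3\pi}2,2\pi)$. To realise $\sfD_\omg^\theta = M_\theta\sfD_\omg^{-}M_\theta$ we need $\ii\mu^{-2} = c_\theta$, that is $\mu^2 = -(1-\sin\theta)/\cos\theta$, which admits a positive root exactly when $\cos\theta<0$, i.e.\ for $\theta\in(\tfrac\pi2,\tfrac{3\pi}2)$. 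In both ranges one sets $\mu_\theta := \big((1-\sin\theta)/|\cos\theta|\big)^{1/2}$ and $M_\theta := \left(\begin{smallmatrix}\mu_\theta I_2 & 0\\ 0 & \mu_\theta^{-1}I_2\end{smallmatrix}\right)$, which is real, self-adjoint and positive; by construction $M_\theta\sfD_\omg M_\theta$ (resp.\ $M_\theta\sfD_\omg^{-}M_\theta$) and $\sfD_\omg^\theta$ have the same action $\cD$ and the same domain (the latter because $M_\theta$ is a support-preserving bijection of $C^\infty_0(\ov{\cC_\omg}\sm\mathbf{0};\dC^4)$), hence they coincide.

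The one delicate point is conceptual rather than computational: a positive-definite, hence non-unitary, conjugation cannot turn the Hermitian matrix $\ii(\aa\cdot\nu_\omg)\beta$ into the Hermitian matrix $(\cos\theta)\ii(\aa\cdot\nu_\omg)\beta+(\sin\theta)\beta$ as matrices; what is actually transported is the boundary subspace (the $+1$-eigenspace of these involutions), and the reduction to the scalar form $v = c\,n_\phi w$ is exactly what makes this visible, collapsing the statement to a scalar identity $\mu^{-2}c_{\rm in} = c_\theta$. The positivity requirement $\mu>0$ is precisely what forces the dichotomy between the two $\theta$-intervals, and the excluded angles $\theta = \tfrac\pi2,\tfrac{3\pi}2$ are those at which $c_\theta$ degenerates to $0$ or $\infty$ (the zig-zag conditions $v=0$ and $w=0$), for which no finite positive $\mu$ can work. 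Everything else is the routine block-matrix bookkeeping of the second step, along the lines of the two-dimensional argument in \cite{PV19}.
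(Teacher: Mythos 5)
Your proposal is correct and follows essentially the same route as the paper: you conjugate by the same positive diagonal matrix $M_\theta=\operatorname{diag}(\mu_\theta I_2,\mu_\theta^{-1}I_2)$ (your $\mu_\theta^2=(1-\sin\theta)/|\cos\theta|$ equals the paper's $|\cos\theta|/(1+\sin\theta)$), using that $M_\theta$ commutes with $\cD$ and transports one boundary condition onto the other. The only difference is presentational: you reduce each boundary condition to the scalar relation $v=c\,n_\phi w$ between the upper and lower spinors, whereas the paper works directly with the $4\times4$ boundary matrices after inverting $I_4-(\sin\theta)\beta$.
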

\begin{proof}
From \eqref{eq:Dirac_operator_quantumdot}, for all $u \in \dom\sfD_\omg^{\theta}$ we have
\begin{equation*}
[I_4  - (\sin\theta)\beta ] \, u|_{\p\cC_\omg} = (\cos\theta) \, [\ii(\aa\cdot\nu_\omg)\beta] \, u|_{\p\cC_\omg}.
\end{equation*}
In our assumptions the matrix $I_4 - (\sin\theta)\beta$ is invertible, so we conclude that 
\eqref{eq:Dirac_operator_quantumdot} is equivalent to
\begin{equation}\label{eq:boundary_computation}
u|_{\p\cC_\omg} = 
\begin{pmatrix}
\frac{\cos\theta}{1-\sin\theta} I_2 & 0 \\
0 & \frac{\cos\theta}{1+\sin\theta} I_2
\end{pmatrix}
[\ii(\aa\cdot\nu_\omg)\beta] \, u|_{\p\cC_\omg}.
\end{equation}
Let $M_\theta$ be the self-adjoint real matrix
\begin{equation*}
M_\theta := 
\begin{pmatrix}
\sqrt{\frac{|\cos\theta|}{1+\sin\theta}} I_2 & 0 \\
0 & \sqrt{\frac{|\cos\theta|}{1-\sin\theta}} I_2
\end{pmatrix}.
\end{equation*}
In the case that $\theta \in (\pi/2, 3\pi/2)$, we get that
\[
\begin{pmatrix}
\frac{\cos\theta}{1-\sin\theta} I_2 & 0 \\
0 & \frac{\cos\theta}{1+\sin\theta} I_2
\end{pmatrix}
[\ii(\aa\cdot\nu_\omg)\beta] 
=
M_\theta^{-1} 
[ -\ii(\aa\cdot\nu_\omg)\beta]  M_\theta,
\]
so from \eqref{eq:boundary_computation} we conclude that
\[
M_\theta u|_{\p\cC_\omg} = [ -\ii(\aa\cdot\nu_\omg)\beta]  M_\theta  u|_{\p\cC_\omg}.
\]
Thanks to the last equation, it is easy to show that
$\sfD_\omg^{\theta} = M_\theta \sfD_\omg^{-} M_\theta$. 
 In the case that $\theta \in [0,\pi/2) \cup (3\pi/2,2\pi)$, we have from \eqref{eq:boundary_computation} that
\[
M_\theta u|_{\p\cC_\omg} = [ \ii(\aa\cdot\nu_\omg)\beta]  M_\theta  u|_{\p\cC_\omg}, 
\]
and from this it is then easy to conclude that $\sfD_\omg^{\theta} = M_\theta \sfD_\omg M_\theta$.
\end{proof}

\begin{proof}[Proof of Proposition \ref{prop:estimate.norm.resolvent}]
Let $u\in\dom\wt{\sfD}_\omg$: from \eqref{eq:decomposition}, $u$ writes as
\begin{equation}\label{eq:hardy.decomposition}
		u(r,\tt,\phi) = \sum_{k\in\dZ}\sum_{\lm\in\cZ_k}
		\frac{\psi_{k,\lm}^+(r)}{r}\Psi_{k,\lm}^+(\tt,\phi)
		+
		\frac{\psi_{k,\lm}^-(r)}{r}\Psi_{k,\lm}^-(\tt,\phi)
\end{equation}
with $\psi_{k,\lm}^\pm\in C^\infty_0(\dR_+)$ for $k\in\dZ$ and $\lm\in\cZ_k$. First of all, observe that
\begin{equation}\label{eq:rhs}
	\int_{\cC_\omg} \frac{|u(x)|^2}{|x|^2}\dd x
	= \sum_{k\in\dZ}\sum_{\lm\in\cZ_k}
	\int_0^\infty\left[\frac{|\psi_{k,\lm}^+(r)|^2}{r^2}+\frac{|\psi_{k,\lm}^-(r)|^2}{r^2}   \right]\dd r. 
\end{equation}
According to the proof of Theorem \ref{thm:main1}, we have that
\begin{equation}\label{eq:hardy.starting}
\begin{split}
\int_{\cC_\omg}|(\sfD_\omg u)(x)|^2 \,\dd x 
 =  \sum_{k\in\dZ}\sum_{\lm\in\cZ_k} &
 \bigg[
 \int_0^\infty
\bigg\vert(\psi_{k,\lm}^+)'(r) - \frac{\lm\psi_{k,\lm}^+(r)}{r}\bigg\vert^2  \dd r
\\
&  +
\int_0^\infty \bigg\vert (\psi_{k,\lm}^-)'(r) + \frac{\lm\psi_{k,\lm}^-(r)}{r}\bigg\vert^2   \dd r
\bigg].
\end{split}
\end{equation}
With an explicit computation, for all $k \in \dZ$ and $\lm \in \cZ_k$ we have
\begin{equation*}
\begin{split}
\int_0^\infty
\left\vert (\psi_{k,\lm}^\pm)'(r) \mp \frac{\lm\psi_{k,\lm}^\pm(r)}{r}\right\vert^2  \dd r
&= \int_0^\infty
\left|(\psi_{k,\lm}^\pm)'(r)\right|^2\dd r+ \lm^2\int_0^\infty
	\frac{|\psi_{k,\lm}^\pm(r)|^2}{r^2}\dd r\\
&\qquad\mp 2\lm\Re\int_0^\infty 
\frac{(\psi_{k,\lm}^\pm)'(r)\ov{\psi_{k,\lm}^\pm(r)}}{r}\dd r. 
\end{split}
\end{equation*}
Since 
\begin{equation*}
2\Re \int_0^{\infty}
\frac{(\psi_{k,\lm}^\pm)'(r)\ov{\psi_{k,\lm}^\pm(r)}}{r}
 \dd r
=
\int_0^{\infty}
\frac{(|\psi_{k,\lm}^\pm(r)|^2)'}{r}
\dd r
=
\int_0^{\infty}
\frac{|\psi_{k,\lm}^\pm(r)|^2}{r^2}\dd r
\end{equation*}
we get using the Hardy inequality for all $k \in \dZ$ and $\lm \in \cZ_k$
\begin{equation}\label{eq:hardy.each.component}
\int_0^\infty
\left\vert (\psi_{k,\lm}^\pm)'(r) \mp \frac{\lm\psi_{k,\lm}^\pm(r)}{r}\right\vert^2  \dd r
\geq 
\left(\lm\mp \frac12\right)^2 \int_0^\infty
\frac{|\psi_{k,\lm}^\pm(r)|^2}{r}\dd r.
\end{equation}
By Proposition \ref{prop:spectrum_outside2}, $\lm^2\geq (\tfrac{\pi}{4\omega} + \tfrac12 )^2$  for all $k \in \dZ$ and $\lambda \in \cZ_k$, so gathering~\eqref{eq:rhs}, \eqref{eq:hardy.decomposition}, \eqref{eq:hardy.starting} and \eqref{eq:hardy.each.component}, we get 
\[
	\int_{\cC_\omg}|(\sfD_\omg u)(x)|^2\dd x
	\ge \left(\frac{\pi}{4\omg}\right)^2 \int_{\cC_\omg}\frac{|u(x)|^2}{|x|^2}\dd x
\]
 and conclude the proof.
\end{proof}
\begin{proof}[Proof of Theorem~\ref{thm:perturbation}]
	Let $\omg \in (0,\pi/2)$.
	Recall that by Theorem~\ref{thm:main1} the operator $\sfD_\omg$ is essentially self-adjoint in the Hilbert space $L^2(\cC_\omg;\dC^4)$. By the inequality
	in Proposition~\ref{prop:estimate.norm.resolvent} for any Hermitian $\V\colon\cC_\omg\arr\dC^{4\times 4}$ satisfying the bound~\eqref{eq:bnd_V} the condition
	\[
		\int_{\cC_\omg}\big|\V(x) u(x)\big|^2\dd x
		\le
		\int_{\cC_\omg}\big|\V(x)\big|^2 |u(x)|^2\dd x \leq
		\left( \frac{4\omg}{\pi} \sup_{x \in \cC_\omg} |x| |\V(x)|  \right )^2 
		\int_{\cC_\omg}|(\sfD_\omg u)(x)|^2\dd x
	\]
	holds for all $u\in \dom\sfD_\omg$.
In other words, the operator of multiplication with the matrix-valued function $\V$ is bounded with respect to the operator $\sfD_\omg$ with the bound $<1$ or $\leq 1$ respectively when $\nu < \frac{\pi}{4\omg}$ or $\nu \leq \frac{\pi}{4\omg}$.
	Hence, we conclude the statements from the Kato-Rellich theorem~\cite[Chap. V, Thm. 4.4]{Kato} and the W\"{u}st theorem~\cite[Chap. V, Thm. 4.6]{Kato},
respectively.
\end{proof}

\subsection*{Acknowledgement}
BC is member of GNAMPA (INDAM). VL acknowledges the support by the grant No.~21-07129S 
of the Czech Science Foundation (GA\v{C}R).
The authors are also grateful to the anonymous referees for valuable suggestions.
In particular, one of these suggestions
led to Remark~\ref{rem:anti} with an alternative argument based on the anti-commutation relation 
for the symmetry of the spectra of the fiber operators.

\begin{appendix}

\section{Partial derivatives in the spherical coordinates}\label{app:pderiv}
The aim of this appendix is to express partial derivatives $\p_j$, $j=1,2,3$ in terms of $\p_r,\p_\tt,\p_\phi$. This material is standard and we provide it only for convenience of the reader.

Employing the chain rule for the differentiation
and using the identities~\eqref{eq:x123} we can express $\p_r,\p_\tt$ and $\p_\phi$ through $\p_1,\p_2$ and $\p_3$ as follows
\begin{equation}\label{eq:prttphi}
\begin{cases}
\p_r  = \cos\phi\sin\tt\p_1 + \sin\phi\sin\tt\p_2 + \cos\tt\p_3,\\
\p_\tt  = r\cos\phi\cos\tt\p_1 +r\sin\phi\cos\tt\p_2 - r\sin\tt\p_3,\\
\p_\phi = -r\sin\phi\sin\tt\p_1 + r\cos\phi\sin\tt\p_2.
\end{cases}	
\end{equation}
It is a standard routine procedure to express $\p_1,\p_2,\p_3$ through $\p_r,\p_\tt,\p_\phi$.
Since it is an important ingredient of our analysis, we outline this derivation below.
First, we divide the expression for $\p_\tt$ and $\p_\phi$ in~\eqref{eq:prttphi} by $r$
\begin{equation}\label{eq:pttphir}
\begin{cases}
\frac{\p_\tt}{r}  = \cos\phi\cos\tt\p_1 +\sin\phi\cos\tt\p_2 - \sin\tt\p_3,\\
\frac{\p_\phi}{r} = -\sin\phi\sin\tt\p_1 + \cos\phi\sin\tt\p_2.
\end{cases}
\end{equation}
Combining the expression for $\p_r$ in~\eqref{eq:prttphi} and the expression for $\frac{\p_\tt}{r}$ in \eqref{eq:pttphir} we obtain
\begin{equation}
\sin\tt\p_r + \frac{\cos\tt\p_\tt}{r}  = \cos\phi\p_1 + \sin\phi\p_2.
\end{equation}
From the above equation and the expression for $\frac{\p_\phi}{r}$ in~\eqref{eq:pttphir} we get
\[
\sin\phi\sin^2\tt\p_r + \frac{\sin\phi\sin\tt\cos\tt\p_\tt}{r} + \frac{\cos\phi\p_\phi}{r} = \sin\tt\p_2. 
\]
Hence, we get the expression for $\p_2$  dividing the above equation by $\sin\tt$
\begin{equation}\label{eq:p2}
\p_2 = \sin\phi\sin\tt\p_r + \frac{\sin\phi\cos\tt\p_\tt}{r} + \frac{\cos\phi}{\sin\tt}\frac{\p_\phi}{r}.
\end{equation}
Combining the above expression for $\p_2$ with the expression for $\frac{\p_\phi}{r}$ in~\eqref{eq:pttphir} we get the expression for $\p_1$
\begin{equation}\label{eq:p1}
\begin{aligned}
\p_1&  =
\frac{1}{\sin\phi\sin\tt}\left(
- \frac{\p_\phi}{r}+ \cos\phi\sin\tt\p_2 \right)\\
& = - \frac{1}{\sin\phi\sin\tt}\frac{\p_\phi}{ r} + \frac{\cos\phi}{\sin\phi}\left(\sin\phi\sin\tt\p_r + \frac{\sin\phi\cos\tt\p_\tt}{r} + \frac{\cos\phi}{\sin\tt}\frac{\p_\phi}{r}\right)\\
& = - \frac{1}{\sin\phi\sin\tt}\frac{\p_\phi} {r} +
\cos\phi\sin\tt \p_r + \frac{\cos\phi\cos\tt\p_\tt}{r} + \frac{\cos^2\phi}{\sin\phi\sin\tt}\frac{\p_\phi}{r}\\
& =
\cos\phi\sin\tt \p_r
+
\frac{\cos\phi\cos\tt\p_\tt}{r}  
- 
\frac{\sin\phi}{\sin\tt}\frac{\p_\phi}{ r}.
\end{aligned}	
\end{equation}
Finally, we obtain a formula for $\p_3$ substituting~\eqref{eq:p1} and~\eqref{eq:p2} into the expression for $\p_r$ in~\eqref{eq:prttphi}
\[
\begin{aligned}
\p_3 & =\frac{1}{\cos\tt}\p_r - \frac{\cos\phi\sin\tt}{\cos\tt}\left(
\cos\phi\sin\tt \p_r
+
\frac{\cos\phi\cos\tt\p_\tt}{r}  
- 
\frac{\sin\phi}{\sin\tt}\frac{\p_\phi}{ r} 	
\right)\\
& \qquad - \frac{\sin\phi\sin\tt}{\cos\tt}\left(\sin\phi\sin\tt\p_r + \frac{\sin\phi\cos\tt\p_\tt}{r} + \frac{\cos\phi}{\sin\tt}\frac{\p_\phi}{r}\right)\\
& =
\left(\frac{1}{\cos\tt} - \frac{\cos^2\phi\sin^2\tt}{\cos\tt}-\frac{\sin^2\phi\sin^2\tt}{\cos\tt} \right)\p_r-
\left(\cos^2\phi\sin\tt +\sin^2\phi\sin\tt\right)\frac{\p_\tt}{r}
\\
&\qquad + \left(\frac{\cos\phi\sin\phi}{\cos\tt}- \frac{\sin\phi\cos\phi}{\cos\tt}\right)\frac{\p_\phi}{r}\\
& = \cos\tt\p_r -\frac{\sin\tt\p_\tt}{r}.
\end{aligned}
\]
Now we summarize the expressions that we obtained
\begin{equation}\label{eq:p123app}
\begin{cases}
\p_1 = \cos\phi\sin\tt \p_r
+
\frac{\cos\phi\cos\tt\p_\tt}{r}  
- 
\frac{\sin\phi}{\sin\tt}\frac{\p_\phi}{ r}, 
\\
\p_2 = \sin\phi\sin\tt\p_r + \frac{\sin\phi\cos\tt\p_\tt}{r} + \frac{\cos\phi}{\sin\tt}\frac{\p_\phi}{r},
\\
\p_3 = \cos\tt\p_r -\frac{\sin\tt\p_\tt}{r}.
\end{cases}
\end{equation}

\section{Dirac systems}\label{app:Dirac}
In this appendix we provide basic facts on deficiency indices of one-dimensional Dirac operators on intervals. We follow the constructions given in \cite{W87} and \cite[Chap. 15]{W03}.

First, we introduce on the interval $\cI := (0,b)$ with $b\in(0,\infty]$ a class of one-dimensional first-order Dirac differential expressions
\begin{equation}\label{eq:differential_expression}
	\tau f(x) = \begin{pmatrix} 0 & 1\\-1 & 0\end{pmatrix} f'(x) + q(x)f(x),
\end{equation}
where $q\colon \cI\arr\dR^{2\times 2}$ is measurable with $q(x)$ symmetric almost everywhere in $\cI$ and $\|q(\cdot)\| \in L^1_{\rm loc}(\cI)$. Throughout this section we denote the inner product in $L^2(\cI;\dC^2)$ by $\langle\cdot,\cdot\rangle$.

We associate with the differential expression $\tau$ the maximal operator in the Hilbert space $L^2(\cI;\dC^2)$
\begin{equation}\label{eq:maximal}
	\sfT f := \tau f,\qquad\dom\sfT = \{f\in L^2(\cI;\dC^2)\colon f\in {\rm AC}_{\rm loc}(\cI;\dC^2), \tau f\in L^2(\cI;\dC^2)\},
\end{equation}
where ${\rm AC}_{\rm loc}(\cI;\dC^2)$ stands for the space of locally absolutely continuous two-component functions on the interval $\cI$.

We define the preminimal operator in $L^2(\cI;\dC^2)$ by
\begin{equation}\label{eq:preminimal}
	\sfT_0'' f := \tau f,\qquad
		\dom\sfT_0'' = C^\infty_0(\cI;\dC^2).
\end{equation}
According to \cite[Chap. 3]{W87} the preminimal operator is closable and its closure 
\begin{equation}\label{eq:minimal}
	\sfT_0 := \ov{\sfT_0''}
\end{equation}
is the minimal symmetric operator in $L^2(\cI;\dC^2)$.

Let us define the Lagrange brackets for (locally) absolutely continuous functions $f,g\colon \cI\arr\dC^2$ by
\begin{equation}\label{eq:bracket}
	[f,g]_x := f_2(x)\ov{g_1(x)} - f_1(x)\ov{g_2(x)}.
\end{equation}
Using the concept of the Lagrange bracket we can provide the Green's identity for $\sfT$.
\begin{prop}[Green's formula, {\cite[Satz 15.3]{W03}}]\label{prop:Green} 
	Let the maximal operator $\sfT$ be as in~\eqref{eq:maximal}. Let the Lagrange bracket $[f,g]_x$ be as in~\eqref{eq:bracket}.
	Then for any $f,g\in\dom\sfT$ there exist the limits
	\[
		[f,g]_0 := \lim_{x\arr 0^+} [f,g]_x,\qquad
		[f,g]_b := \lim_{x\arr b^-}[f,g]_x,
	\]
	and it holds that
	\[
		\langle \sfT f,g\rangle - \langle f,\sfT g\rangle = [f,g]_b - [f,g]_0.
	\]
\end{prop}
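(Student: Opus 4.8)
The plan is to prove the identity first on an arbitrary compact subinterval $[c,d]\subset\cI$ via the classical pointwise Lagrange identity, and then to pass to the limit $c\arr 0^+$, $d\arr b^-$; the existence of the two boundary limits will come for free from the global integrability of the right-hand side. There is no genuine obstacle here — the statement is precisely \cite[Satz 15.3]{W03} — so this is just a matter of organising the standard argument.

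First I would record the pointwise Lagrange identity. For $f=(f_1,f_2)^\top$ and $g=(g_1,g_2)^\top$ in $\dom\sfT$, all four scalar functions $f_1,f_2,g_1,g_2$ lie in ${\rm AC}_{\rm loc}(\cI)$, hence so do the products $f_j\overline{g_k}$, and the product rule holds for a.e.\ $x\in\cI$. Using $\tau f = \left(\begin{smallmatrix}0&1\\-1&0\end{smallmatrix}\right)f' + qf$, the skew-symmetry of $\left(\begin{smallmatrix}0&1\\-1&0\end{smallmatrix}\right)$, and the fact that $\langle q(x)f(x),g(x)\rangle_{\dC^2} = \langle f(x),q(x)g(x)\rangle_{\dC^2}$ because $q(x)$ is real and symmetric (recall the convention that $\langle\cdot,\cdot\rangle_{\dC^2}$ is linear in the first and conjugate-linear in the second entry), a short computation gives, with all functions on the right evaluated at $x$,
\begin{align*}
\langle(\tau f)(x),g(x)\rangle_{\dC^2} - \langle f(x),(\tau g)(x)\rangle_{\dC^2}
&= f_2'\,\overline{g_1} - f_1'\,\overline{g_2} + f_2\,\overline{g_1'} - f_1\,\overline{g_2'}\\
&= \frac{\dd}{\dd x}[f,g]_x
\end{align*}
for a.e.\ $x\in\cI$, with $[f,g]_x$ the Lagrange bracket of~\eqref{eq:bracket}. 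Integrating over $[c,d]$ and invoking the fundamental theorem of calculus for locally absolutely continuous functions then yields the local Green formula
\[
\int_c^d\big(\langle\tau f,g\rangle_{\dC^2} - \langle f,\tau g\rangle_{\dC^2}\big)\,\dd x = [f,g]_d - [f,g]_c,\qquad [c,d]\subset\cI.
\]

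Next I would handle the endpoints. Since $f,g,\tau f,\tau g\in L^2(\cI;\dC^2)$, the Cauchy--Schwarz inequality shows that the map $x\mapsto\langle(\tau f)(x),g(x)\rangle_{\dC^2} - \langle f(x),(\tau g)(x)\rangle_{\dC^2}$ belongs to $L^1(\cI)$. Fixing $c$ and letting $d\arr b^-$ in the local Green formula, the integral on the left converges, so $[f,g]_b:=\lim_{d\arr b^-}[f,g]_d$ exists and is finite; symmetrically, fixing $d$ and letting $c\arr0^+$ gives the existence of $[f,g]_0:=\lim_{c\arr0^+}[f,g]_c$. Letting $c\arr0^+$ and $d\arr b^-$ simultaneously and recalling $\sfT f=\tau f$, $\sfT g=\tau g$, I obtain
\[
\langle\sfT f,g\rangle - \langle f,\sfT g\rangle = \int_0^b\big(\langle\tau f,g\rangle_{\dC^2} - \langle f,\tau g\rangle_{\dC^2}\big)\,\dd x = [f,g]_b - [f,g]_0,
\]
which is the claim.

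The only points deserving a little care are thus the bookkeeping in the pointwise Lagrange identity — keeping track of the inner product convention and of the reality and symmetry of $q$ — and the observation that it is precisely the global $L^1(\cI)$-integrability of the commutator $\langle\tau f,g\rangle_{\dC^2} - \langle f,\tau g\rangle_{\dC^2}$, a consequence of $f,g,\tau f,\tau g\in L^2(\cI;\dC^2)$, that forces the one-sided limits of $[f,g]_x$ to exist at the possibly singular endpoints $0$ and $b$.
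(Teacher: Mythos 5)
Your proof is correct, and it is exactly the standard argument behind this statement: the paper does not prove it but simply quotes it from Weidmann \cite[Satz 15.3]{W03}, whose proof proceeds in the same way (pointwise Lagrange identity, Green's formula on compact subintervals $[c,d]\subset\cI$, then the $L^1$-integrability of $\langle\tau f,g\rangle_{\dC^2}-\langle f,\tau g\rangle_{\dC^2}$ forces the existence of the one-sided limits $[f,g]_0$ and $[f,g]_b$). No gaps to report.
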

The domain of the closed symmetric operator $\sfT_0$ can be explicitly characterised.
\begin{prop}\cite[Thm. 3.11]{W87}\label{prop:domT0}
	For $z\in\dC\sm\dR$ we set $\cN_z := \ker(\sfT-z)$, then
	\[
	\begin{split}
		\dom\sfT_0 & = \left\{f\in \dom\sfT\colon [g,f]_0 = [g,f]_b = 0\,\, \forall\, g\in\dom\sfT\right\}\\
		&=\left\{f\in \dom\sfT\colon [g,f]_0 = [g,f]_b = 0\,\, \forall\, g\in\cN_z +\cN_{\ov z}\right\}.
	\end{split}	
	\]
\end{prop}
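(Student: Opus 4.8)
The plan is to derive both identities directly from Green's formula (Proposition~\ref{prop:Green}), taking as input only the standard facts recorded in \cite[Chap.~3]{W87}: that $\sfT_0''$ is closable, that $\sfT_0 = \ov{\sfT_0''} = \sfT^*$, and hence that $\dom\sfT_0\subseteq\dom\sfT$ with $\sfT_0 = \sfT\uhr\dom\sfT_0$ (the latter because $\sfT^*=\sfT_0\subseteq\sfT$). Write $M$ for the set on the right-hand side of the first displayed identity. First I would check $\dom\sfT_0\subseteq M$: if $f\in\dom\sfT_0 = \dom\sfT^*$ then $\langle\sfT g,f\rangle = \langle g,\sfT^* f\rangle = \langle g,\sfT f\rangle$ for every $g\in\dom\sfT$, and Green's formula turns this into $[g,f]_b - [g,f]_0 = 0$ for all such $g$. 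The reverse inclusion is equally immediate: if $f\in\dom\sfT$ satisfies $[g,f]_0 = [g,f]_b = 0$ for all $g\in\dom\sfT$, then Green's formula gives $\langle\sfT g,f\rangle = \langle g,\sfT f\rangle$ for all $g$, i.e.\ $f\in\dom\sfT^* = \dom\sfT_0$.

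The only point that requires care is passing from ``$[g,f]_b = [g,f]_0$ for all $g\in\dom\sfT$'' to the separate vanishing of the two boundary terms, and I would handle this with a cutoff argument. Choosing $\chi\in C^\infty(\cI)$ with $\chi\equiv 1$ in a neighbourhood of $0$ and $\chi\equiv 0$ in a neighbourhood of $b$, one checks that for every $g\in\dom\sfT$ the function $\chi g$ again lies in $\dom\sfT$: indeed $\chi g\in L^2(\cI;\dC^2)\cap{\rm AC}_{\rm loc}(\cI;\dC^2)$ and $\tau(\chi g) = \chi\,\tau g + \chi'\left(\begin{smallmatrix}0&1\\-1&0\end{smallmatrix}\right)g\in L^2(\cI;\dC^2)$ since $\chi,\chi'$ are bounded. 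By the support properties of $\chi$ we have $[\chi g,f]_0 = [g,f]_0$ and $[\chi g,f]_b = 0$, so evaluating the identity on $\chi g$ yields $[g,f]_0 = 0$; swapping the roles of the two endpoints gives $[g,f]_b = 0$ as well. This establishes $\dom\sfT_0 = M$.

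For the second identity, ``$\subseteq$'' is trivial because $\cN_z + \cN_{\ov z}\subseteq\dom\sfT$. For ``$\supseteq$'' I would use the first von Neumann formula \cite{W03}, namely $\dom\sfT = \dom\sfT_0\dotplus\cN_z\dotplus\cN_{\ov z}$. Given $f\in\dom\sfT$ with $[g,f]_0 = [g,f]_b = 0$ for all $g\in\cN_z+\cN_{\ov z}$, decompose an arbitrary $g\in\dom\sfT$ as $g = g_0 + g_z + g_{\ov z}$ with $g_0\in\dom\sfT_0$. Since $\dom\sfT_0 = M$ and $f\in\dom\sfT$, we have $[f,g_0]_0 = [f,g_0]_b = 0$, hence $[g_0,f]_0 = -\ov{[f,g_0]_0} = 0$ and $[g_0,f]_b = -\ov{[f,g_0]_b} = 0$; therefore $[g,f]_0 = [g_z+g_{\ov z},f]_0 = 0$ and likewise $[g,f]_b = 0$. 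Thus $f\in M = \dom\sfT_0$. The main obstacle, modest as it is, is the cutoff step that decouples the two endpoints; once Green's formula and von Neumann's first formula are available, everything else is bookkeeping with the anti-symmetry $[g,f]_x = -\ov{[f,g]_x}$ of the Lagrange bracket.
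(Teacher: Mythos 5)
Your proof is correct. Note that the paper does not prove this proposition at all — it is quoted verbatim from \cite[Thm.~3.11]{W87} — so there is no internal argument to compare against; what you have written is essentially the standard textbook derivation. All the delicate points are handled properly: the identification $\sfT_0=\sfT^*$ (from $(\sfT_0'')^*=\sfT$), the cutoff argument showing $\chi g\in\dom\sfT$ so that the two boundary terms decouple, the anti-symmetry $[g,f]_x=-\ov{[f,g]_x}$ of the Lagrange bracket, and the first von Neumann formula $\dom\sfT=\dom\sfT_0\dotplus\cN_z\dotplus\cN_{\ov z}$ (valid for every non-real $z$, not just $z=\pm\ii$) to reduce the second identity to the first.
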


The endpoints of the interval $\cI$ admit some classifications. First, one can classify whether the endpoints are regular or singular.
\begin{dfn}\label{dfn:singular_regular}
	Let the differential $\tau$ be as in~\eqref{eq:differential_expression}.
	Then $\tau$ is called \emph{regular} at $x = 0$ if there exists $c \in\cI$ so that $\|q(\cdot)\|$ is integrable on $[0,c]$. 
	The expression $\tau$ is called \emph{regular} at $x = b$ if $b <\infty$ and there exists $c \in\cI$ so that $\|q(\cdot)\|$ is integrable on $[c,b]$. 
	If $\tau$ is not regular, respectively, at $x = 0$ and/or at $x=b$, then it is called \emph{singular} at $x = 0$ and/or $x = b$, respectively.
\end{dfn}
The functions in $\dom\sfT$ have certain regularity in the neighbourhood of a regular endpoint. This is clarified in the next proposition.
\begin{prop}\cite[Satz 15.4]{W03}\label{prop:regular}
	Let the maximal operator $\sfT$ be as in~\eqref{eq:maximal}.
	Assume that $\tau$ is regular at $x = b$. Then the following hold.
	\begin{myenum}
        \item For any $f\in\dom\sfT$ there exists a finite
         limit $f(b):=\lim_{x\arr b^-}f(x)$
		\item The mapping $\dom\sfT\ni f\mapsto f(b)\in\dC^2$ is surjective.
	\end{myenum}
	Analogous statements hold if $\tau$ is regular at $x = 0$.
\end{prop}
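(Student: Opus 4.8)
The plan is to read the equation $\tau f = g$, where for $f\in\dom\sfT$ we abbreviate $g := \tau f \in L^2(\cI;\dC^2)$, as a first-order linear ODE system. Writing $\mathsf J := \left(\begin{smallmatrix}0&1\\-1&0\end{smallmatrix}\right)$ for the coefficient matrix in~\eqref{eq:differential_expression}, so that $\mathsf J^2 = -\mathsf I$, the identity $\mathsf J f' + q f = g$ is equivalent to $f'(x) = \mathsf J q(x) f(x) - \mathsf J g(x)$ for a.e.\ $x\in\cI$. Since $\tau$ is regular at $x = b$ we have $b<\infty$ and $\|q(\cdot)\|$ integrable on some $[c,b]$ with $c\in(0,b)$; moreover $g\in L^2([c,b];\dC^2)\subset L^1([c,b];\dC^2)$ because $[c,b]$ has finite length. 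This is the setting in which I would run the argument.

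For part (i), I would first show $f$ is bounded near $b$. Fixing $x_0\in[c,b)$ and integrating the ODE from $x_0$ to $x\in[x_0,b)$ gives $\|f(x)\|\le C + \int_{x_0}^x\|q(t)\|\,\|f(t)\|\,\dd t$ with $C := \|f(x_0)\| + \int_{x_0}^b\|g(t)\|\,\dd t<\infty$, so Gr\"onwall's inequality yields $\|f(x)\|\le C\exp\!\big(\int_{x_0}^b\|q(t)\|\,\dd t\big)=:M$ on $[x_0,b)$. With boundedness in hand, for $x_0\le y<x<b$ one has $\|f(x)-f(y)\|\le M\int_y^x\|q(t)\|\,\dd t + \int_y^x\|g(t)\|\,\dd t$, and since $t\mapsto\int_{x_0}^t\|q(s)\|\,\dd s$ and $t\mapsto\int_{x_0}^t\|g(s)\|\,\dd s$ are absolutely continuous on $[x_0,b]$, the right-hand side tends to $0$ as $x,y\arr b^-$. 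Hence $f$ satisfies the Cauchy criterion at $b$, so $f(b) := \lim_{x\arr b^-}f(x)$ exists and is finite, which is (i).

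For part (ii), given $v\in\dC^2$ I would solve the terminal-value problem $f_0' = \mathsf J q f_0$, $f_0(b)=v$, on $[c,b]$; by the Carath\'eodory existence and uniqueness theory for linear systems with $L^1$ coefficients (the theory underlying Appendix~\ref{app:Dirac}) this has a solution $f_0\in {\rm AC}([c,b];\dC^2)$, which is bounded on $[c,b]$ and satisfies $\tau f_0 = 0$ there. I would then patch it into a global function supported away from the origin: choosing $c<c'<b$ and $\chi\in C^\infty([0,b])$ with $\chi\equiv 0$ on $[0,c]$ and $\chi\equiv 1$ on $[c',b]$, put $f:=0$ on $(0,c]$ and $f := \chi f_0$ on $[c,b)$. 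Then $f\in {\rm AC}_{\rm loc}(\cI;\dC^2)\cap L^2(\cI;\dC^2)$, and a short computation using $\tau f_0 = 0$ gives $\tau f = \mathsf J\chi' f_0$, which is bounded and supported in the compact set $[c,c']\subset\cI$, so $\tau f\in L^2(\cI;\dC^2)$. Thus $f\in\dom\sfT$ with $f(b)=f_0(b)=v$, proving surjectivity. The statement at $x=0$ follows by the same argument with $[0,c]$ in place of $[c,b]$, invoking regularity of $\tau$ at $0$.

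The Gr\"onwall estimate and the cutoff are routine; the only point that genuinely needs care is checking that the patched function in (ii) lies in the \emph{maximal} domain — i.e.\ that differentiating $\chi f_0$ does not spoil square-integrability of $\tau f$ — which is precisely why $\chi$ is taken identically $1$ on a full one-sided neighbourhood of $b$, so that $\chi'$ vanishes there and $\tau f$ is supported strictly inside $\cI$.
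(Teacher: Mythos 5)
Your argument is correct: the Gr\"onwall/integral-equation step gives boundedness and the Cauchy criterion at the regular endpoint, and the terminal-value problem plus cutoff yields surjectivity, with the patched function landing in $\dom\sfT$ exactly as you check. The paper itself does not prove this proposition but cites \cite[Satz 15.4]{W03}, and the proof behind that citation is essentially the same standard regular-endpoint ODE argument you give, so there is nothing to add.
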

The connection between the maximal operator and the minimal and preminimal operators is revealed in the next proposition.
\begin{prop}\label{prop:T0adjoint}
	Let the maximal operator $\sfT$, the preminimal operator $\sfT_0''$ and the minimal operator $\sfT_0$ be as in~\eqref{eq:maximal},~\eqref{eq:preminimal}, and~\eqref{eq:minimal}, respectively. Then 
	\[
		(\sfT_0'')^* = \sfT_0^* = \sfT.
	\]	
\end{prop}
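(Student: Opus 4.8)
The plan is to establish the two operator inclusions $\sfT\subseteq(\sfT_0'')^*$ and $(\sfT_0'')^*\subseteq\sfT$, and then to note that $(\sfT_0'')^*=(\ov{\sfT_0''})^*=\sfT_0^*$, since the adjoint of a densely defined closable operator always coincides with the adjoint of its closure: indeed $\sfT_0''\subseteq\sfT_0$ gives $\sfT_0^*\subseteq(\sfT_0'')^*$, while any $g\in\dom(\sfT_0'')^*$ satisfies $\langle\sfT_0 f,g\rangle=\langle f,(\sfT_0'')^*g\rangle$ for all $f\in\dom\sfT_0$ by approximating $f$ by test functions and passing to the limit, so that $g\in\dom\sfT_0^*$ as well. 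Thus it suffices to prove $(\sfT_0'')^*=\sfT$.

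For the inclusion $\sfT\subseteq(\sfT_0'')^*$, fix $g\in\dom\sfT$ and $f\in C^\infty_0(\cI;\dC^2)=\dom\sfT_0''$. Applying the Green formula of Proposition~\ref{prop:Green} (or simply integrating by parts on a compact subinterval of $\cI$ containing the support of $f$), the Lagrange brackets $[f,g]_0$ and $[f,g]_b$ vanish because $f$ is identically zero near both endpoints, whence $\langle\sfT_0''f,g\rangle=\langle\tau f,g\rangle=\langle f,\tau g\rangle=\langle f,\sfT g\rangle$. Therefore $g\in\dom(\sfT_0'')^*$ with $(\sfT_0'')^*g=\sfT g$, which is the claimed inclusion.

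For the reverse inclusion, let $g\in\dom(\sfT_0'')^*$ and put $h:=(\sfT_0'')^*g\in L^2(\cI;\dC^2)$, so that $\langle\tau f,g\rangle=\langle f,h\rangle$ for every $f\in C^\infty_0(\cI;\dC^2)$. Writing $\tau f=Jf'+qf$ with $J:=\left(\begin{smallmatrix}0&1\\-1&0\end{smallmatrix}\right)$ and using that $q(x)$ is real symmetric, one may transfer the multiplication term to the other side to obtain $\langle Jf',g\rangle=\langle f,h-qg\rangle$ for all such $f$; since $J^*=-J$ this reads $-\langle f',Jg\rangle=\langle f,h-qg\rangle$, which is precisely the statement that the distributional derivative of $Jg$ equals $h-qg$. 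As $g\in L^2_{\rm loc}(\cI)$, $q\in L^1_{\rm loc}(\cI)$ and $h\in L^2_{\rm loc}(\cI)$, the right-hand side belongs to $L^1_{\rm loc}(\cI)$; a distribution on an interval whose derivative is locally integrable has a locally absolutely continuous representative. Hence $Jg$, and therefore $g$ (because $J$ is a constant invertible matrix), lies in ${\rm AC}_{\rm loc}(\cI;\dC^2)$, with $g'=-J(h-qg)$ almost everywhere. Consequently $\tau g=Jg'+qg=(h-qg)+qg=h\in L^2(\cI;\dC^2)$, so $g\in\dom\sfT$ and $\sfT g=h=(\sfT_0'')^*g$, which completes the proof.

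The only step that is not pure bookkeeping is the regularity argument, i.e.\ the classical du Bois-Reymond-type lemma that identifies distributions with locally integrable derivative as locally absolutely continuous functions; everything else reduces to the identities $J^*=-J$ and $q^*=q$. This is essentially the content of \cite[Chap.~3]{W87} and \cite[\S 15]{W03}, which we may simply invoke, but since the argument is short we reproduce it for the reader's convenience.
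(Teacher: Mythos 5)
Your proposal is correct, but it is worth noting that the paper itself offers no proof of this proposition at all: it is quoted as a standard fact from Weidmann's treatment of Dirac systems (\cite[Chap.~3]{W87}, \cite[Chap.~15]{W03}). What you supply is the classical self-contained argument behind that citation: the identification $(\sfT_0'')^*=\sfT_0^*$ via the general fact that a densely defined closable operator and its closure have the same adjoint; the inclusion $\sfT\subseteq(\sfT_0'')^*$ by integration by parts (the Lagrange brackets vanish because the test function is supported away from the endpoints); and the reverse inclusion by reading the adjoint relation as a distributional identity for $Jg$ with $J=\left(\begin{smallmatrix}0&1\\-1&0\end{smallmatrix}\right)$ and invoking the du Bois-Reymond lemma to produce the ${\rm AC}_{\rm loc}$ representative, whence $\tau g=h\in L^2(\cI;\dC^2)$ and $g\in\dom\sfT$. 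This is exactly the route taken in the cited references, so the net effect of your write-up is to make the appendix self-contained rather than to introduce a new method.

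One justification should be tightened. You assert that $g\in L^2_{\rm loc}$, $q\in L^1_{\rm loc}$ and $h\in L^2_{\rm loc}$ force $h-qg\in L^1_{\rm loc}$; as stated this is false (an $L^1_{\rm loc}$ matrix times an $L^2_{\rm loc}$ vector need not be locally integrable). The step is nevertheless harmless here: for $\sfT_0''$ with domain $C^\infty_0(\cI;\dC^2)$ to map into $L^2(\cI;\dC^2)$ at all one already needs $qf\in L^2$ for every test function, i.e.\ $\|q(\cdot)\|\in L^2_{\rm loc}(\cI)$, and then $qg\in L^1_{\rm loc}$ by Cauchy--Schwarz, so both the splitting $\langle\tau f,g\rangle=\langle Jf',g\rangle+\langle qf,g\rangle$ and the transfer $\langle qf,g\rangle=\langle f,qg\rangle$ are legitimate. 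In the application made in this paper the point is moot, since the relevant potentials \eqref{eq:q_alpha} are smooth on the open interval; but you should either add the local square-integrability remark or, as in \cite{W87}, define the (pre)minimal operator on compactly supported elements of $\dom\sfT$.
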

Next we define the concepts of being right or left in $L^2(\cI;\dC^2)$.
\begin{dfn}\label{dfn:leftright}
	Let $f\colon\cI\arr\dC^2$ be measurable. One says that $f$ lies "left in $L^2(\cI;\dC^2)$" if there exists $c\in\cI$ such that $f|_{(0,c)} \in L^2((0,c);\dC^2)$. One says that $f$ lies "right in $L^2(\cI;\dC^2)$" if there exists $c\in\cI$ such that $f|_{(c,b)} \in L^2((c,b);\dC^2)$
\end{dfn}
The solutions of the differential equation $(\tau - z)f = 0$ obey a dichotomy with respect to the property stated in Definition~\ref{dfn:leftright}.
\begin{prop}[Weyl's alternative]
	\label{prop:Weyl}
	For any Dirac differential expression $\tau$ as in~\eqref{eq:differential_expression} on the interval $\cI$ one of the following two alternatives holds.
	\begin{myenum}
	\item For all $z\in\dC$ every solution of $(\tau-z)f = 0$ lies left in $L^2(\cI;\dC^2)$.
	\item For all $z\in\dC$ there is at least one solution of $(\tau -z)f = 0$ that does not lie left in $L^2(\cI;\dC^2)$. In this case there exists for any $z\in\dC\sm\dR$ exactly one solution of $(\tau -z)f = 0$ (up to a constant factor) that lies "left in $L^2(\cI;\dC^2)$".
	\end{myenum}
	The same alternative holds for ``right in $L^2(\cI;\dC^2)$''.
\end{prop}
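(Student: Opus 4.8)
This is the classical limit--point/limit--circle dichotomy for the first--order system $\tau$, and the plan is to reproduce the nested--disk (Weyl) construction in the form of \cite{W87} and \cite[Chap.~15]{W03}. I will analyse the endpoint $x=0$; the endpoint $x=b$ is handled by the same construction performed near $b$ (with ``left'' replaced by ``right''). By \eqref{eq:differential_expression}, $(\tau-z)f=0$ is equivalent to the linear system $f'=\left(\begin{smallmatrix}0&-1\\1&0\end{smallmatrix}\right)(zI-q)f$, whose coefficient matrix lies in $L^1_{\rm loc}(\cI)$, so the solution space $E_z:=\{f\in{\rm AC}_{\rm loc}(\cI;\dC^2)\colon(\tau-z)f=0\}$ is two--dimensional for every $z\in\dC$. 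Set $d(z):=\dim\{f\in E_z\colon f\text{ lies left in }L^2(\cI;\dC^2)\}\in\{0,1,2\}$; the two alternatives in the statement are precisely ``$d(z)=2$ for all $z$'' and ``$d(z)\le 1$ for all $z$, with $d(z)=1$ when $z\in\dC\sm\dR$''.

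\emph{Step 1 ($z$-independence of the limit--circle case).} I would first show that $d(z_0)=2$ for one $z_0\in\dC$ forces $d(z)=2$ for every $z$. Fix a fundamental matrix $\Phi$ of the $z_0$-system. Its coefficient matrix is trace--free, so Liouville's formula gives $\det\Phi\equiv\mathrm{const}\ne0$; hence the entries of $\Phi^{-1}$ are, up to this constant, entries of $\Phi$, and if all columns of $\Phi$ lie left in $L^2$ so do those of $\Phi^{-1}$. For $g\in E_z$, variation of parameters yields, with a fixed $x_1\in\cI$,
\[
  g(x)=\Phi(x)\left(c+(z-z_0)\int_{x_1}^{x}\Phi(t)^{-1}\begin{pmatrix}0&-1\\1&0\end{pmatrix}g(t)\,\dd t\right),\qquad c\in\dC^2 .
\]
Estimating in $L^2((0,c))$ for small $c\in(0,x_1)$ and using the Cauchy--Schwarz inequality together with $\|\Phi\|_{L^2((0,c))}\|\Phi^{-1}\|_{L^2((0,c))}\to0$ as $c\to0^+$, one obtains $\|g\|_{L^2((0,c))}\le A(c)+|z-z_0|\,B(c)\,\|g\|_{L^2((0,c))}$ with $B(c)\to0$; choosing $c$ so that $|z-z_0|B(c)<1$ absorbs the last term and shows that $g$ lies left in $L^2$, i.e.\ $d(z)=2$. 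Consequently either $d\equiv2$ --- which is alternative~(i) --- or $d(z)\le1$ for every $z$.

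\emph{Step 2 (at least one left-$L^2$ solution for non-real $z$).} It remains to prove $d(z)\ge1$ for $z\in\dC\sm\dR$. Fix such a $z$ with $\Im z>0$ and an interior point $b'\in\cI$; let $\phi,\theta$ solve $(\tau-z)f=0$ with $\phi(b')=\binom{1}{0}$, $\theta(b')=\binom{0}{1}$, and for $w\in\dC$ put $\chi_w:=\theta+w\phi$, so $[\chi_w,\chi_w]_{b'}=\bar w-w=-2\ii\,\Im w$ by \eqref{eq:bracket}. Green's formula (Proposition~\ref{prop:Green}) on $(x_0,b')$ applied to $\chi_w$, using $\tau\chi_w=z\chi_w$, gives
\[
  2\,(\Im z)\int_{x_0}^{b'}|\chi_w|^2\,\dd x=-2\,\Im w-\tfrac{1}{\ii}[\chi_w,\chi_w]_{x_0} .
\]
From this identity one checks, as in the classical computation, that for each $x_0\in(0,b')$ the set $D_{x_0}:=\{w\in\dC\colon\tfrac{1}{\ii}[\chi_w,\chi_w]_{x_0}\le0\}=\{w\colon\int_{x_0}^{b'}|\chi_w|^2\le-\Im w/\Im z\}$ is a closed disk --- the image of a closed half--plane under the M\"obius transformation determined by $\Phi(x_0,z)$ --- of positive radius $\big(2\,\Im z\int_{x_0}^{b'}|\phi|^2\big)^{-1}$; that these disks are nested, $D_{x_0'}\subset D_{x_0}$ for $0<x_0'<x_0$; and that their radii stay bounded. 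Hence $D_0:=\bigcap_{x_0\in(0,b')}D_{x_0}$ is a non--empty closed disk or a single point, and for $w_\infty\in D_0$ monotone convergence gives $\int_0^{b'}|\chi_{w_\infty}|^2\le-\Im w_\infty/\Im z<\infty$, so $\chi_{w_\infty}$ is a non--zero solution lying left in $L^2$: $d(z)\ge1$.

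\emph{Conclusion and main obstacle.} If $d(z)=2$ for some (hence, by Step~1, every) $z$ we are in alternative~(i); otherwise $d(z)\le1$ for all $z$ and, by Step~2, $d(z)=1$ for $z\in\dC\sm\dR$, which is alternative~(ii) (for every $z$ some solution of $(\tau-z)f=0$ is not left in $L^2$, and for non-real $z$ the left-$L^2$ solution is unique up to a scalar). The right endpoint is treated identically near $b$. The one genuinely delicate point is Step~2: verifying that $D_{x_0}$ is an honest disk of the stated positive radius, that it is nested inside $D_{x_0'}$ for $x_0'<x_0$ with bounded radii, and that the resulting uniform $L^2$-bound survives the limit $x_0\to0^+$. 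Step~1 is routine once $\det\Phi\equiv\mathrm{const}$ is recorded, and the final assembly is bookkeeping.
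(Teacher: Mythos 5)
Your sketch is correct, and it reproduces the classical limit--point/limit--circle argument (Liouville plus variation of parameters for the $z$-independence, then the nested Weyl disks for existence of a left-$L^2$ solution when $\Im z\neq0$), which is precisely the proof in the references \cite{W87} and \cite[Chap.~15]{W03} that the paper relies on — the paper itself states Proposition~\ref{prop:Weyl} without proof as background material. The points you leave implicit (absorbing $\|g\|_{L^2((0,c))}$ by first estimating on $(\epsilon,c)$ and letting $\epsilon\to0^+$, reducing $\Im z<0$ to $\Im z>0$ by complex conjugation since $q$ is real, and the disk geometry you flag) are routine and handled exactly as in those references.
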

The Weyl's alternative stated above yields yet another classification of the endpoints of the interval $\cI$ with respect to the differential expression $\tau$.
\begin{dfn}\label{dfn:lplc}
	If the first alternative in Proposition~\ref{prop:Weyl} holds, then one says that $\tau$ is limit-circle at $x = 0$ (respectively, at $x = b$). If the second alternative in Proposition~\ref{prop:Weyl} holds, the one says that $\tau$ is limit-point at $x = 0$ (respectively, at $x = b$).
\end{dfn}
The limit-point/limit-circle classification provides extra properties of functions in $\dom\sfT$ at the endpoints and allows to compute the deficiency indices of the closed symmetric operator $\sfT_0$.
\begin{prop}\label{prop:lplc}
	Let $\tau$ be the Dirac differential expression as in~\eqref{eq:differential_expression} on the interval $\cI$. Let the maximal operator $\sfT$ and the minimal operator $\sfT_0$ be as in~\eqref{eq:maximal} and~\eqref{eq:minimal}, respectively.
	\begin{myenum}
	\item If $\tau$ is limit-point at $x = 0$ (respectively, at $x = b$) then $[f,g]_0 = 0$ (respectively, $[f,g]_b = 0$) for all $f,g\in\dom\sfT$.
	\item The deficiency indices of $\sfT_0$ are
	\begin{myenum}
	\item [-] $(2,2)$ if $\tau$ is limit-circle at $x = 0$ and $x = b$.
	\item [-] $(1,1)$ if $\tau$ is limit-circle at either $x = 0$ or at $x= b$ and limit-point at the other endpoint of $\cI$.
	\item [-] $(0,0)$ if $\tau$ is limit-point at both endpoints of $\cI$.
	\end{myenum}
	\end{myenum}
\end{prop}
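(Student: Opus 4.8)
The plan is to deduce both items from a single dimension count for the space of square-integrable solutions of $(\tau-z)f=0$, $z\in\dC\sm\dR$, combined with Green's formula (Proposition~\ref{prop:Green}) and the von Neumann decomposition $\dom\sfT=\dom\sfT_0\dotplus\cN_z\dotplus\cN_{\ov z}$, which is legitimate because $\sfT_0^*=\sfT$ by Proposition~\ref{prop:T0adjoint}. One structural point has to be respected: item (i) and the limit-point/limit-point case of item (ii) are logically intertwined, so I would prove them in the order ``the limit-circle/limit-circle and mixed cases of (ii)'', then ``(i)'', then ``the remaining case of (ii)'', which avoids circularity.

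For the first two cases of (ii) I would fix $z$ with $\Im z>0$. Since $q$ is real, complex conjugation is an antilinear bijection $\cN_z\to\cN_{\ov z}$, so $n_+(\sfT_0)=n_-(\sfT_0)=:n$. The solution space of $(\tau-z)f=0$ is two-dimensional by standard existence and uniqueness theory (valid since $\|q\|\in L^1_{\rm loc}$), and $\cN_z=\ker(\sfT-z)$ consists exactly of the solutions lying both left and right in $L^2(\cI;\dC^2)$. By the Weyl alternative (Proposition~\ref{prop:Weyl}), the subspace of left-$L^2$ solutions is two-dimensional if $\tau$ is limit-circle at $0$ and one-dimensional if it is limit-point at $0$, and symmetrically at $b$. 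Intersecting two subspaces inside a two-dimensional space then yields $n=2$ in the limit-circle/limit-circle case and $n=1$ when exactly one endpoint is limit-circle.

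For (i), assume $\tau$ is limit-point at $0$ and fix an interior point $c\in\cI$. On $(0,c)$ the expression $\tau$ is regular (hence limit-circle) at $c$ and limit-point at $0$, so the mixed case of (ii), already available, shows that the minimal operator $\sfT_0^{(0,c)}$ has deficiency indices $(1,1)$; by von Neumann this gives $\dim\big(\dom\sfT^{(0,c)}/\dom\sfT_0^{(0,c)}\big)=2$. By Proposition~\ref{prop:regular} the evaluation map $\dom\sfT^{(0,c)}\ni f\mapsto f(c)\in\dC^2$ is surjective, and by Proposition~\ref{prop:domT0} its kernel $\{f:f(c)=0\}$ contains $\dom\sfT_0^{(0,c)}$, since $[g,f]_c=0$ for all $g$ forces $f(c)=0$ once $c$ is regular. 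Comparing codimensions forces $\dom\sfT_0^{(0,c)}=\{f:f(c)=0\}$. Now, given arbitrary $f,g\in\dom\sfT$, I would write $g=g_0+e$ on $(0,c)$ with $g_0(c)=0$ and $e$ a truncated solution of $\tau e=e$ with $e(c)=g(c)$ (cut off to vanish near $0$), so that $e\equiv0$ near $0$; then $[f,g]_0=[f,g_0]_0$, and $[f,g_0]_0=0$ by Proposition~\ref{prop:domT0} on $(0,c)$ since $g_0\in\dom\sfT_0^{(0,c)}$. Hence $[f,g]_0=0$ for all $f,g\in\dom\sfT$, and the endpoint $b$ is handled symmetrically.

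Finally, for the limit-point/limit-point case of (ii): if some nonzero $f$ solved $(\tau-z)f=0$ with $f\in L^2(\cI;\dC^2)$ and $\Im z\neq0$, then $f\in\dom\sfT$, $\sfT f=zf$, and Green's formula would give $[f,f]_b-[f,f]_0=(z-\ov z)\|f\|^2=2\ii(\Im z)\|f\|^2\neq0$, contradicting $[f,f]_0=[f,f]_b=0$ from item (i); therefore $n=0$. I expect the main obstacle to be the bookkeeping in the proof of (i) — pinning down $\dom\sfT_0^{(0,c)}$ as $\{f:f(c)=0\}$ via matching codimensions and the surjectivity of the boundary evaluation — together with keeping the logical order clean so that (i) and the last case of (ii) are not invoked circularly.
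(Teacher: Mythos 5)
The paper never proves this proposition: it is stated in Appendix~B as imported background, with the appendix explicitly deferring to Weidmann ([W87] and [W03, Chap.~15]), so there is no in-paper argument to compare against. Your proof is correct and is essentially the classical decomposition (Glazman) argument from those references. The dimension count via Proposition~\ref{prop:Weyl} (for nonreal $z$ the left-$L^2$ solutions form a $2$- or $1$-dimensional subspace of the $2$-dimensional solution space according as the endpoint $0$ is limit-circle or limit-point, and likewise at $b$) correctly settles the limit-circle/limit-circle and mixed cases of (ii), using $n_\pm(\sfT_0)=\dim\cN_{\pm\ii}$ and $\sfT_0^*=\sfT$. Your derivation of (i) is also sound: applying the mixed case on $(0,c)$, identifying $\dom\sfT_0^{(0,c)}=\{f\in\dom\sfT^{(0,c)}\colon f(c)=0\}$ by matching the codimension two from the von Neumann decomposition against the surjectivity of the evaluation map in Proposition~\ref{prop:regular}, and then writing $g=g_0+e$ with $e$ a cut-off solution of $\tau e=e$; choosing a genuine solution rather than an arbitrary smooth extension is the right move, since $q$ is only $L^1_{\rm loc}$ and a naive truncation of $g$ need not satisfy $\tau e\in L^2$. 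The Green's-formula contradiction for the limit-point/limit-point case is the standard one, and your ordering (mixed and limit-circle cases of (ii), then (i), then the last case of (ii)) does avoid circularity. The points you leave implicit are all routine: a regular endpoint is limit-circle, restrictions of elements of $\dom\sfT$ to $(0,c)$ belong to $\dom\sfT^{(0,c)}$, and a solution lying both left and right in $L^2$ lies in $L^2(\cI;\dC^2)$.
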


\section{Numerical results}\label{app:plots}
In Remark \ref{rmk:conjecture} we conjectured that Theorem~\ref{thm:main2} is in fact valid for all $\omg \in (0,\pi)\setminus\{\pi/2\}$; in this appendix we provide a numerical evidence to support this claim.

The key step in the proof of Theorem \ref{thm:main2} for all $\omega \in (0,\pi)\setminus\{\pi/2\}$ would be showing the property
\begin{equation}\label{eq:to_hold}
\sigma(\sfT_k) \cap \left[-\frac12,\frac12\right] = \emptyset \quad \text{ for all }k \in \mathbb{Z} \text{ and } \omg \in (0,\pi)\setminus\left\{\frac{\pi}2\right\}.
\end{equation}
In our analytical results we are able to show it only for $\omg \in (0,\pi/2)$: this poses a limitation in the statement of Theorem \ref{thm:main2}.

In fact, \eqref{eq:to_hold} holds for $k \in \mathbb{Z} \setminus \{-1,0\}$ and all $\omg \in (0,\pi)\setminus\{\pi/2\}$, since it is true that $\lambda \in \sigma(\sfT_k) = \cZ_k\cup(-\cZ_k) \implies |\lambda|\geq (\sqrt{3}+1)/2 \approx 1.37$, see Remark \ref{rem:sharpness}.  It remains to consider the cases $k = -1,0$ and $\omg \in (\pi/2,\pi)$.
We remark that $\sigma(\sfT_k) = \cZ_k\cup(-\cZ_k)$ is a discrete set and it is expressed by the
solutions of the transcendental equation in \eqref{eq:Zk}: thanks to  \cite[Eq.~(14.9.5)]{DLMF}, it is true that  $\cZ_{-1} = -\cZ_{0}$, so it is enough to treat the case $k = 0$ and $\omg \in (\pi/2,\pi)$. We provide here a plot where we draw the set $\cZ_0$ for all $\omg\in(0,\pi)\setminus \{\frac{\pi}{2}\}$ and understand if \eqref{eq:to_hold} holds true.

In Figure \ref{fig:plot0}, the set $\cZ_{0}$ is plotted with a black line for $\omega \in (0,\pi)\setminus\{\frac{\pi}2\}$. For $\omega < \frac{\pi}2$,  Proposition~\ref{prop:spectrum_outside2} implies the spectral bound $\lambda \in \cZ_{0} \implies |\lambda| \geq \frac{\pi}{4 \omega} + \frac12$: the curves $\lambda = \pm\left(  \frac{\pi}{4 \omega} + \frac12\right)$ are plotted with dotted green curves. Finally, the lines $\lambda = 1/2$ and $\lambda = -1/2$ are plotted with  dashed red lines.

It is evident from Figure \ref{fig:plot0} that $\cZ_0 \cap [-1/2,1/2] = \emptyset$, so Theorem \ref{thm:main2} should be true for all $\omg \in (0,\pi)\setminus\{\pi/2\}$.
\begin{figure}[H]
\includegraphics[width=100mm]{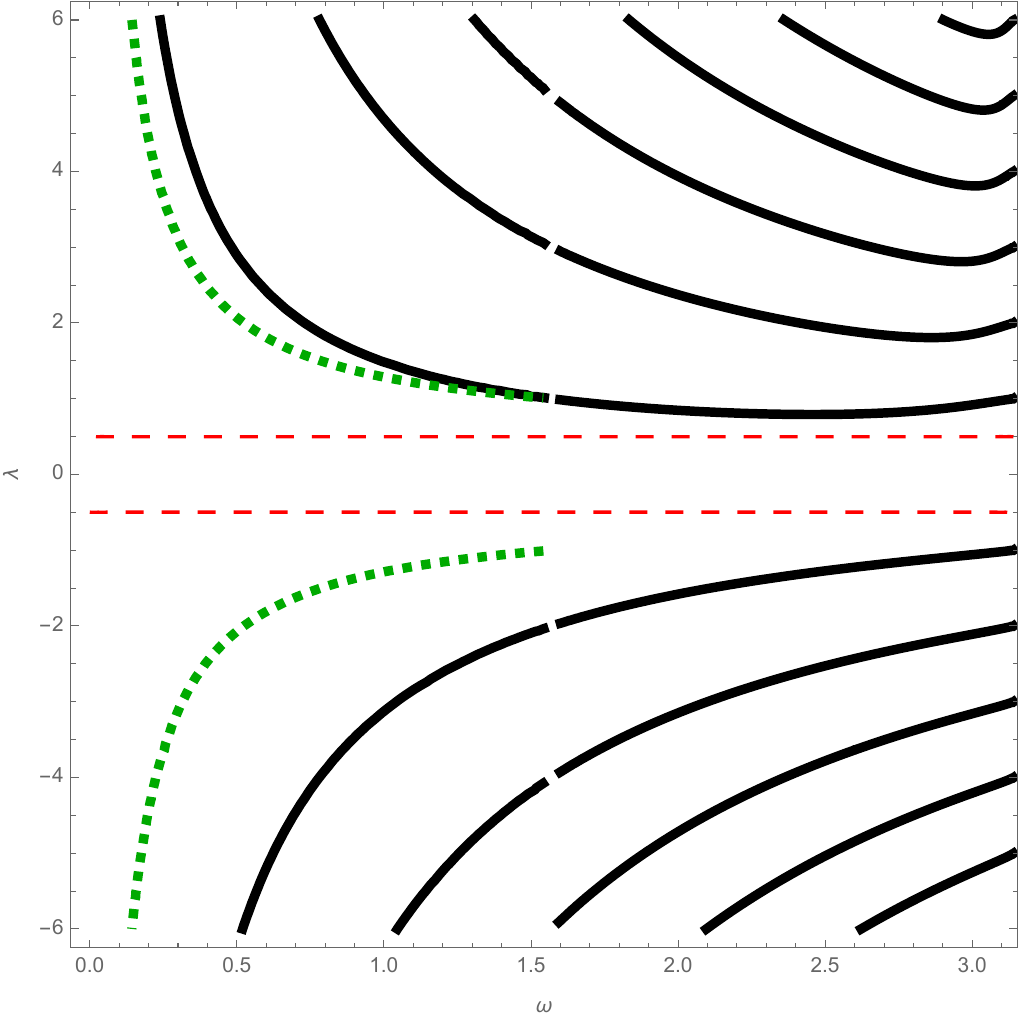}
\caption{The set $\cZ_{0}$ is plotted with black lines; the curves $\lambda = \pm( \frac{\pi}{4 \omega} + \frac12)$ are plotted with dotted green curves;  the lines $\lambda = \pm1/2$ are plotted with  dashed red lines.}
\label{fig:plot0}
\end{figure}
\begin{remark}
	We observe from numerics that the eigenvalues of $\sfT_0$ converge to the eigenvalues of the principal fiber of the spin-orbit operator corresponding to the Dirac operators on the half-space $\dR^3_+$ (with MIT bag boundary conditions) and in the full space $\dR^3$ as $\omg\arr\frac{\pi}{2}$ and $\omg\arr\pi^-$, respectively. This phenomenon can be an indication of the underline convergence of the spin-orbit operators in the generalized norm resolvent sense.
\end{remark}
\end{appendix}

\newcommand{\etalchar}[1]{$^{#1}$}

\end{document}